\theoremstyle{definition}
\newtheorem{theorem}{Theorem}[section]
\newtheorem{lemma}[theorem]{Lemma}
\newtheorem{proposition}[theorem]{Proposition}
\newtheorem{definition}[theorem]{Definition}
\newtheorem{corollary}[theorem]{Corollary}
\theoremstyle{definition}
\newtheorem*{thm*}{Theorem}
\newtheorem{mydef}[theorem]{Definition}
\newtheorem{lem}[theorem]{Lemma}
\newtheorem{example}[theorem]{Example}
\newtheorem{remark}[theorem]{Remark}
\newcommand{\Ass}{\text{Ass}}
\newcommand{\Min}{\text{Min}}
\DeclareMathOperator{\Spec}{Spec}
\DeclareMathOperator{\height}{ht}
\DeclareMathOperator{\vp}{\varphi}
\DeclareMathOperator{\id}{id}
\begin{document}

\title[Spectra of Noetherian UFDs]{Every Finite Poset is Isomorphic to a Saturated Subset of the Spectrum of a Noetherian UFD}
\author{C. Colbert and S. Loepp}

\thanks{The first author was partially supported by a Lenfest grant from Washington and Lee University.}

\maketitle

\begin{abstract}
We show that every finite poset is isomorphic to a saturated subset of the spectrum of a Noetherian unique factorization domain.  In addition, we show that every finite poset is isomorphic to a saturated subset of the spectrum of a quasi-excellent domain.
\end{abstract}

\tikzstyle{main} = [draw, circle, draw=black, fill=black, minimum size = 1mm]
\tikzstyle{main2} = [draw, circle, draw=blue, fill=blue, minimum size = 1mm]
\tikzstyle{main3} = [draw, circle, draw=red, fill=red, minimum size = 1mm]
\tikzstyle{main4} = [draw, circle, draw=white, fill=white, minimum size = 1mm]

\section{Introduction}

%Although commutative rings are a central object of study within commutative algebra, there still remain tantalizing mysteries regarding the basic anatomy of important classes of commutative rings. 
Although commutative rings are quite possibly the most important object in commutative algebra, there remain tantalizing mysteries regarding their anatomy.  In particular, the prime ideal structure is not well understood for large classes of commutative rings.  In this article, we focus on better understanding the set of prime ideals of a commutative ring -- referred to as the prime spectrum of the ring -- considered as a partially ordered set with respect to inclusion.
%In particular, the prime ideal structure -- \textcolor{blue}{referred to as the spectrum of the ring} -- is not well understood for large classes of commutative rings.  
On the positive side, Hochster proved a remarkable result in 1969 \cite{Hochster}.  Given a partially ordered set $X$, he found necessary and sufficient conditions for the existence of a commutative ring $R$ such that $X$ is isomorphic to the prime spectrum of $R$ as partially ordered sets. The analogous result for Noetherian rings, however, remains wide open.  That is, given a partially ordered set $X$, it is unknown exactly when there exists a Noetherian ring $R$ such that $X$ is isomorphic to the prime spectrum of $R$.  There has been progress made (see, for example, \cite{WiegandEXPO} for a nice survey on the topic), but the general question is still unsolved, even when the dimension of the partially ordered set is two.  

Given that this open question seems so elusive even more than 50 years after Hochster's beautiful result, it is reasonable to ask the related but less onerous question: ``Which finite partially ordered sets can occur as part of the prime spectrum of a Noetherian ring?" To be more precise, we ask, ``Which finite partially ordered sets can be embedded into the prime spectrum of a Noetherian ring in a way that preserves saturated chains?" We provide a rigorous definition of this notion in Section \ref{Prelims}, and when this happens, we say that the given poset is isomorphic to a saturated subset of the prime spectrum of the ring.  Finite partially ordered sets that are not catenary are of particular interest.  In the early twentieth century, it was thought that, since a local (Noetherian) domain is a dense subspace of its completion, which is necessarily catenary, all local domains might be catenary.  In 1956, however, Nagata constructed a family of  noncatenary local domains \cite{Nagata}.  More than 20 years later, Heitmann in \cite{HeitmannNoncatenary} showed that, perhaps surprisingly, given any finite partially ordered set $X$, there is a Noetherian domain $R$ such that $X$ is isomorphic to a saturated subset of the prime spectrum of $R$.  In other words, {\em any} noncatenary finite poset can occur as part of the prime spectrum of a Noetherian domain.  

A natural extension is to ask which finite partially ordered sets can be embedded into the prime spectrum of a Noetherian ring with a particularly nice property.  For example, the family of noncatenary Noetherian domains that Nagata constructed enjoy the property that their integral closures are catenary.  So in 1956, Nagata asked the natural question of whether or not integrally closed domains must be catenary.  In a 1980 paper \cite{Ogoma}, Ogoma answered this question by constructing noncatenary integrally closed local domains.  Given this, one might now wonder how ``nice" noncatenary rings can be.  For example, regular local rings and excellent rings are necessarily catenary.  As Noetherian unique factorization domains sit between integrally closed domains (which can be noncatenary) and regular local rings (which must be catenary), a next reasonable question is: do there exist noncatenary Noetherian unique factorization domains? This question was not answered until 1993 when Heitmann in \cite{heitmannUFD} constructed a noncatenary unique factorization domain. This leads to the natural question: can {\em any} noncatenary finite poset occur as part of the prime spectrum of a Notherian unique factorization domain? Or, more precisely, given a finite partially ordered set $X$, does there exist a Noetherian unique factorization domain $A$ such that $X$ is isomorphic to a saturated subset of the prime spectrum of $A$?  Partial progress on this problem is made in \cite{Avery} and \cite{Semendinger} where it is shown that certain noncatenary partially ordered sets are isomorphic to a saturated subset of the prime spectrum of a Noetherian unique factorization domain.  In this article, we find a definitive answer.  In a result we find rather surprising (Theorem \ref{FinalUFDTheorem}), we show that {\em all} finite partially ordered sets occur as part of the prime spectrum of a Noetherian unique factorization domain. In other words, we show that, given a finite partially ordered set $X$, there is a Noetherian unique factorization domain $A$ such that $X$ is isomorphic to a saturated subset of the prime spectrum of $A$. This shows that rings with very desirable properties can have very badly behaved prime spectra, and, in particular, that there are no restrictions on finite subsets of the prime spectra of Noetherian unique factorization domains.  We note that our result generalizes the result in \cite{HeitmannNoncatenary} discussed in the previous paragraph.

We also consider quasi-excellent rings, another class of Noetherian rings that behave well, and we show that an analogous result holds.  In other words, we show that, given any finite  partially ordered set $X$, there is a quasi-excellent domain $B$ satisfying the property that there is an embedding from $X$ to the prime spectrum of $B$ that preserves saturated chains.  Specifically, in Corollary \ref{FinalQuasiExcellent}, we show that given a finite partially ordered set $X$, there exists a quasi-excellent domain $B$ such that $X$ is isomorphic to a saturated subset of the prime spectrum of $B$.  This shows that, even though $B$ is a ring with geometrically regular formal fibers, it can have a prime spectrum containing a finite subset that is arbitrarily badly behaved. Notice that the result cannot possibly hold for excellent rings since excellent rings are catenary by definition.  Relatedly, it is worth noting that, by choosing finite partially ordered sets that are not catenary, our result provides a large class of rings that are quasi-excellent but not excellent.  Since a Noetherian ring is excellent if it is a G-ring, is J-2, and is universally catenary, our results show that the universally catenary condition is truly disjoint from the other two conditions.  

%\textcolor{red}{put something in here about the referee comment: I am not sure where to work this in, but there is a very interesting aspect to this article which merits inclusion somewhere. A ring is excellent if it is a G-ring, is J-2, and is universally catenarian. This paper demonstrates that the catenarian condition is truly disjoint from the other two.}

To illustrate how we construct our rings, we consider a specific example, and we informally describe our construction for that example. All of the ideas we describe in the example will be formalized later in the paper.   Let $X$ be the finite partially ordered set pictured in Figure \ref{DrawingOfX}, and we note that previously, it was unknown whether or not there exists a Noetherian unique factorization domain $A$ such that $X$ is isomorphic to a saturated subset of the prime spectrum of $A$.  Note that $X$ can be realized using a sequence of partially ordered sets where we start with a partially ordered set containing one element and, to get from one partially ordered set to the next, we either ``grow" nodes out of a minimal node, or we ``glue" two minimal nodes together.  This sequence of partially ordered sets is illustrated in Figures \ref{GrowingGluingPicture1} and \ref{GrowingGluingPicture2}.  We show that, for each partially ordered set in the sequence, there is a corresponding local ring satisfying some nice properties such that the given partially ordered set is isomorphic to a saturated subset of the prime spectrum of the local ring.  We also show that, if a local ring corresponding to one partially ordered set in our sequence is quasi-excellent, then so is the local ring corresponding to the next partially ordered set in the sequence.  We use $\mathbb{C}$ as our first ring, and so, since $\mathbb{C}$ is quasi-excellent, our last local ring will be as well.  It follows that the last local ring in our sequence is quasi-excellent and satisfies the property that its prime spectrum contains a saturated subset that is isomorphic to $X$.  Finally, we use the nice properties of our local rings to show that there is a Noetherian unique factorization domain $A$ such that $X$ is isomorphic to a saturated subset of the prime spectrum of $A$.

%%%%%%%%%%%%%%%%%%%%%%%%%%%%%%%%%
\begin{figure}
    \centering
    \begin{tikzpicture}
%NODES
\node[main] (1) {};
\node[main] (4) [below right of=1] {};
\node[main] (5) [below of=4] {};
\node[main] (6) [below left of= 5] {};
\node[main] (2) [below left of=4, left of=4] {};

\node[main] (8) [below right of=6] {};
\node[main] (9) [below of=8] {};
\node[main] (10) [below left of= 9] {};
\node[main] (7) [below left of=8, left of=8] {};

%DRAWING
\draw (1) -- (4) -- (5) -- (6) -- (8) -- (9) -- (10);
\draw (1) -- (2) -- (6) -- (7) -- (10);
\end{tikzpicture}
    \caption{Poset $X$}
    \label{DrawingOfX}
\end{figure}

%%%%%%%%TOP%%%%%%%%%%%
\begin{figure}
    \centering
\begin{tikzpicture}

\node[main] (1) {};
%NODES
\node[main] (1) {};
\node[white] (4) [below right of=1] {};
%\node[main] (5) [below of=4] {5};
%\node[main] (6) [below left of= 5] {6};
\node[white] (2) [below left of=4, left of=4] {};
%\node[main] (6L) [below of = 2] {$6'$};
%\node[main] (6R) [below of = 5] {$6''$};

%\node[main] (8) [below right of=6] {8};
%\node[main] (9) [below of=8] {9};
%\node[main] (10) [below left of= 9] {10};
%\node[main] (7) [below left of=8, left of=8] {7};
%\node[main] (10L) [below of = 7] {$10'$};
%\node[main] (10R) [below of =9] {$10''$};

%DRAWING
%\draw (1) -- (4);
%\draw (1) -- (2);
\end{tikzpicture}\hspace{1cm}\vspace{.5cm}
\begin{tikzpicture} 
%NODES
\node[main] (1) {};
\node[main] (4) [below right of=1] {};
%\node[main] (5) [below of=4] {5};
%\node[main] (6) [below left of= 5] {6};
\node[main] (2) [below left of=4, left of=4] {};
%\node[main] (6L) [below of = 2] {$6'$};
%\node[main] (6R) [below of = 5] {$6''$};

%\node[main] (8) [below right of=6] {8};
%\node[main] (9) [below of=8] {9};
%\node[main] (10) [below left of= 9] {10};
%\node[main] (7) [below left of=8, left of=8] {7};
%\node[main] (10L) [below of = 7] {$10'$};
%\node[main] (10R) [below of =9] {$10''$};

%DRAWING
\draw (1) -- (4);
\draw (1) -- (2);
\end{tikzpicture}\hspace{1cm}
\begin{tikzpicture}
%NODES
\node[main] (1) {};
\node[main] (4) [below right of=1] {};
\node[main] (5) [below of=4] {};
%\node[main] (6) [below left of= 5] {6};
\node[main] (2) [below left of=4, left of=4] {};
%\node[main] (6L) [below of = 2] {$6'$};
%\node[main] (6R) [below of = 5] {$6''$};

%\node[main] (8) [below right of=6] {8};
%\node[main] (9) [below of=8] {9};
%\node[main] (10) [below left of= 9] {10};
%\node[main] (7) [below left of=8, left of=8] {7};
%\node[main] (10L) [below of = 7] {$10'$};
%\node[main] (10R) [below of =9] {$10''$};

%DRAWING
\draw (1) -- (4) -- (5);
\draw (1) -- (2);
\end{tikzpicture}

\begin{tikzpicture}
%NODES
\node[main] (1) {};
\node[main] (4) [below right of=1] {};
\node[main] (5) [below of=4] {};
%\node[main] (6) [below left of= 5] {6};
\node[main] (2) [below left of=4, left of=4] {};
\node[main] (6L) [below of = 2] {};
%\node[main] (6R) [below of = 5] {$6''$};

%\node[main] (8) [below right of=6] {8};
%\node[main] (9) [below of=8] {9};
%\node[main] (10) [below left of= 9] {10};
%\node[main] (7) [below left of=8, left of=8] {7};
%\node[main] (10L) [below of = 7] {$10'$};
%\node[main] (10R) [below of =9] {$10''$};

%DRAWING
\draw (1) -- (4) -- (5);
\draw (1) -- (2) -- (6L);
\end{tikzpicture}\hspace{1cm}
\begin{tikzpicture}
%NODES
\node[main] (1) {};
\node[main] (4) [below right of=1] {};
\node[main] (5) [below of=4] {};
\node[main] (2) [below left of=4, left of=4] {};
\node[main] (6L) [below of = 2] {};
\node[main] (6R) [below of = 5] {};

%\node[main] (8) [below right of=6] {8};
%\node[main] (9) [below of=8] {9};
%\node[main] (10) [below left of= 9] {10};
%\node[main] (7) [below left of=8, left of=8] {7};
%\node[main] (10L) [below of = 7] {$10'$};
%\node[main] (10R) [below of =9] {$10''$};

%DRAWING
\draw (1) -- (4) -- (5) -- (6R);
\draw (1) -- (2) -- (6L);
\end{tikzpicture}\hspace{1cm}
\begin{tikzpicture}
%NODES
\node[main] (1) {};
\node[main] (4) [below right of=1] {};
\node[main] (5) [below of=4] {};
\node[main] (6) [below left of= 5] {};
\node[main] (2) [below left of=4, left of=4] {};

%\node[main] (8) [below right of=6] {8};
%\node[main] (9) [below of=8] {9};
%\node[main] (10) [below left of= 9] {10};
%\node[main] (7) [below left of=8, left of=8] {7};
%\node[main] (10L) [below of = 7] {$10'$};
%\node[main] (10R) [below of =9] {$10''$};

%DRAWING
\draw (1) -- (4) -- (5) -- (6);
\draw (1) -- (2) -- (6);
\end{tikzpicture}
    \caption{Growing and Gluing to Obtain $X$}
    \label{GrowingGluingPicture1}
\end{figure}%%%%%%%%%%%%%%%%%%%%%END FIGURE 1%%%%%%%%%%%%%%%%%%%%%%%
\begin{figure}
    \centering
\begin{tikzpicture}
%NODES
\node[main] (1) {};
\node[main] (4) [below right of=1] {};
\node[main] (5) [below of=4] {};
\node[main] (6) [below left of= 5] {};
\node[main] (2) [below left of=4, left of=4] {};

\node[main] (8) [below right of=6] {};
%\node[main] (9) [below of=8] {9};
%\node[main] (10) [below left of= 9] {10};
\node[main] (7) [below left of=8, left of=8] {};
%\node[main] (10L) [below of = 7] {$10'$};
%\node[main] (10R) [below of =9] {$10''$};

%DRAWING
\draw (1) -- (4) -- (5) -- (6) -- (8);
\draw (1) -- (2) -- (6) -- (7);
\end{tikzpicture}\hspace{2cm}
\begin{tikzpicture}
%NODES
\node[main] (1) {};
\node[main] (4) [below right of=1] {};
\node[main] (5) [below of=4] {};
\node[main] (6) [below left of= 5] {};
\node[main] (2) [below left of=4, left of=4] {};

\node[main] (8) [below right of=6] {};
\node[main] (9) [below of=8] {};
%\node[main] (10) [below left of= 9] {10};
\node[main] (7) [below left of=8, left of=8] {};
%\node[main] (10L) [below of = 7] {$10'$};
%\node[main] (10R) [below of =9] {$10''$};

%DRAWING
\draw (1) -- (4) -- (5) -- (6) -- (8) -- (9);
\draw (1) -- (2) -- (6) -- (7);
\end{tikzpicture}\hspace{2cm}
\begin{tikzpicture}
%NODES
\node[main] (1) {};
\node[main] (4) [below right of=1] {};
\node[main] (5) [below of=4] {};
\node[main] (6) [below left of= 5] {};
\node[main] (2) [below left of=4, left of=4] {};

\node[main] (8) [below right of=6] {};
\node[main] (9) [below of=8] {};
%\node[main] (10) [below left of= 9] {10};
\node[main] (7) [below left of=8, left of=8] {};
\node[main] (10L) [below of = 7] {};
%\node[main] (10R) [below of =9] {$10''$};

%DRAWING
\draw (1) -- (4) -- (5) -- (6) -- (8) -- (9);
\draw (1) -- (2) -- (6) -- (7) -- (10L);
\end{tikzpicture}

\begin{tikzpicture}
%NODES
\node[main] (1) {};
\node[main] (4) [below right of=1] {};
\node[main] (5) [below of=4] {};
\node[main] (6) [below left of= 5] {};
\node[main] (2) [below left of=4, left of=4] {};

\node[main] (8) [below right of=6] {};
\node[main] (9) [below of=8] {};
%\node[main] (10) [below left of= 9] {10};
\node[main] (7) [below left of=8, left of=8] {};
\node[main] (10L) [below of = 7] {};
\node[main] (10R) [below of =9] {};

%DRAWING
\draw (1) -- (4) -- (5) -- (6) -- (8) -- (9) -- (10R);
\draw (1) -- (2) -- (6) -- (7) -- (10L);
\end{tikzpicture} \hspace{2cm}
\begin{tikzpicture}
%NODES
\node[main] (1) {};
\node[main] (4) [below right of=1] {};
\node[main] (5) [below of=4] {};
\node[main] (6) [below left of= 5] {};
\node[main] (2) [below left of=4, left of=4] {};

\node[main] (8) [below right of=6] {};
\node[main] (9) [below of=8] {};
\node[main] (10) [below left of= 9] {};
\node[main] (7) [below left of=8, left of=8] {};

%DRAWING
\draw (1) -- (4) -- (5) -- (6) -- (8) -- (9) -- (10);
\draw (1) -- (2) -- (6) -- (7) -- (10);
\end{tikzpicture}
    \caption{Growing and Gluing to Obtain $X,$ continued}
    \label{GrowingGluingPicture2}
\end{figure}
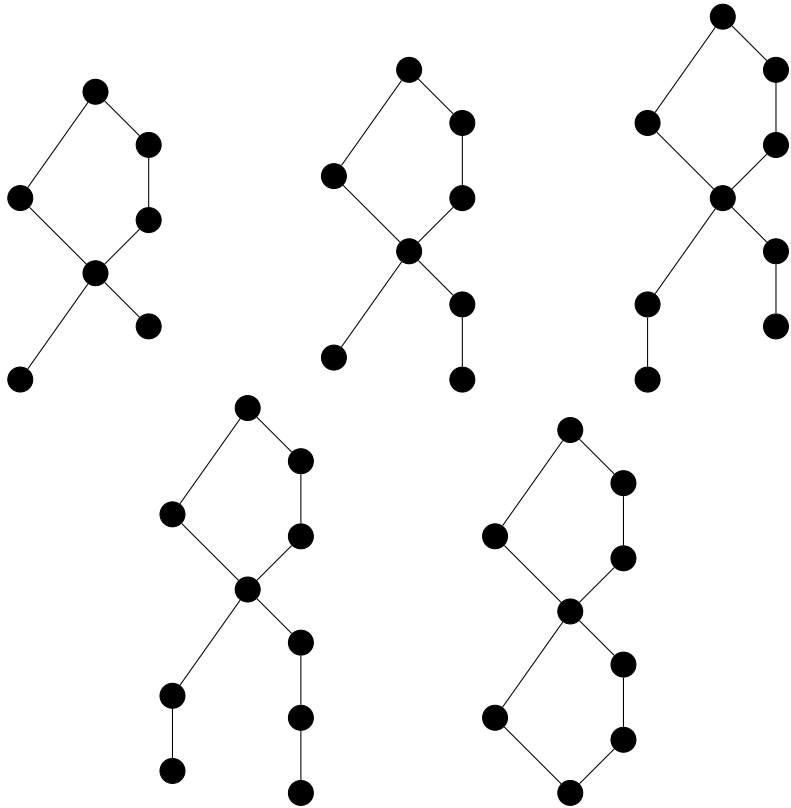
%%%%%%%%%%%%%%%%%%%%%%%%%%%%%%%%%%%%%%%%%

In Section \ref{Prelims} we find sufficient conditions for a partially ordered set to be isomorphic to a saturated subset of the prime spectrum of a ring.  In Section \ref{Growing and Gluing}, we describe how to find rings that correspond to our growing and gluing process described in the previous paragraph, and in Section \ref{UFDTheorem}, we show how to construct our final Noetherian unique factorization domain.  To show our result in full generality, it is necessary to show that every finite partially ordered set can be realized with our growing and gluing process.  This is delicate and technical and is done in Section \ref{GluingNodes}.  Our main results are in Section \ref{MainResults}, with our two central results being Corollary \ref{FinalQuasiExcellent} and Theorem \ref{FinalUFDTheorem}.

%We start with the single top node of $X$ and the ring $\mathbb{C}$ whose prime spectrum has exactly one element.

%and let $Z$ Let $B$ be a local ring satisfying some desirable conditions that we will specify later, and let $\{P_1, P_2, \ldots ,P_m\}$ be the minimal prime ideals of $B$.

%In Section \ref{Growing and Gluing}, we first show that, given a local ring satisfying some desirable conditions, there is another  

%Along a different axis of ``nice", we consider excellent.

%In this article, we consider the prime ideal structure of two classes of  Noetherian rings that are well behaved.  In particular, we focus on Noetherian unique factorization domains and quasi-excellent rings.

%The main result of this article is the following.  Let $X$ be a finite poset.  Then there exists a Noetherian UFD $A$ such that $X$ is isomorphic to a saturated subset of $\Spec(A)$.  As a consequence, every finite poset is isomorphic to a saturated subset of the spectrum of a Noetherian UFD.

%Things to put in intro:

%Strengthens the results of Heitmann, and some others (SMALL 2017, Alex paper).

%Along the way, we prove results that are interesting in their own right (gluing thm and UFD thm).

%Nagata's conjecture that normal rings were catenary?  (look that up)

%History -- we didn't even know there was a noncatenary UFD until the early 90's.  Now we show there are no restrictions on finite subsets of spectra of Noetherian UFDs.

%Explain the procedure -- growing and gluing and then, at the last step, a UFD.

\vspace{.2cm}

\noindent {\bf Notation.} All rings in this paper are commutative with unity.  When we say a ring is local, Noetherian is implied, and we use quasi-local for a ring with one maximal ideal that is not necessarily Noetherian.  If $B$ is a local ring with maximal ideal $M$ we use $(B,M)$ to denote the ring. If $(B,M)$ is a local ring, we use $\widehat{B}$ to denote the $M$-adic completion of $B$.  We use the standard abbreviations UFD for a unique factorization domain and RLR for a regular local ring, and we use the term poset for a partially ordered set.  Finally, when we say the spectrum of a ring $B$ or we write $\Spec{(B)}$, we mean the prime spectrum of $B$, and $\Min{(B)}$ denotes the minimal prime ideals of $B$.

%talk about quasi-excellent rings, and that maybe we have a large class of them.

%Put in an example to illustrate our procedure -- maybe the ``hour glass" noncatenary picture.

%Local implies Noetherian.

%We use $\widehat{B}$ to denote the completion of a local ring $B$ at its maximal ideal.

%Notation $(B,M)$

%UFD means unique factorization domain.

%RLR means regular local ring.

%poset means partially ordered set.

%talk about the quasi-excellent stuff.  Note that the ``quasi" part is absolutely necessary since excellent rings are catenary.

%put in all abbreviations

%put in structure of the paper -- what we do in each section.

\section{Preliminaries} \label{Prelims}

In this section, we first define what it means for a subposet $Z$ of a poset $Y$ to be a saturated subset of $Y$.  Then, given two posets $X$ and $Y$, we present sufficient conditions for $X$ to be isomorphic to a saturated subset of $Y$.

\begin{definition}
Let $X$ and $Y$ be posets. We say $f: X \to Y$ is a \textit{poset map} if for all $x, y \in X,$ $x \le y$ implies $f(x) \le f(y).$ We say a poset map $f:  X \to Y$ is a \textit{poset embedding} if for all $x, y \in X,$ $f(x) \le f(y)$ implies $x \le y.$ A surjective poset embedding from $X$ onto $Y$ is called a \textit{poset isomorphism.}
\end{definition}

\begin{remark}
Note that poset embeddings are necessarily injective: if $f: X \to Y$ is a poset embedding and $f(x) = f(y),$ then $f(x) \le f(y)$ and $f(y) \le f(x).$ So $x \le y$ and $y \le x,$ and therefore $x = y.$
\end{remark}

%\begin{definition}
%Let $X$ be a poset, and let $u,v \in X$.  We say that $v$ covers $u$ if $u < v$ and $w \in %X$ such that $u \leq w \leq v$ implies that $w = u$ or $w = v$.
%\end{definition}

\begin{definition}\label{saturatedDefinition}
Let $X$ be a poset, and let $x, y \in X.$ We say $y$ \textit{covers} $x$, and we write $x <_c y$, if $x < y$ and for all $z \in X,$ if $x \le z \le y,$ then $x = z$ or $y = z.$ We say $f: X \to Y$ is a \textit{saturated embedding} if $f$ is a poset embedding and for all $x, y \in X,$ if $y$ covers $x,$ then $f(y)$ covers $f(x)$ in $Y.$ If $Z \subseteq Y$ is a subposet (i.e., $Z$ is a poset under the same order relation on $Y$) we say $Z$ is a \textit{saturated} subset of $Y$ if for all $u, v \in Z,$ whenever $v$ covers $u$ in $Z,$ it also covers $u$ in $Y.$
\end{definition}

\begin{remark}
If $f: X \to Y$ is a saturated embedding, then $y$ covers $x$ if and only if $f(y)$ covers $f(x).$ To see this, suppose $f(y)$ covers $f(x).$ Then $f(x) < f(y),$ which implies $x < y$ since $f$ is a poset embedding. If $x \le z \le y$ for $z \in X,$ then $f(x) \le f(z) \le f(y),$ so $f(x) = f(z)$ or $f(y) = f(z).$ Since every poset embedding is an injective map, $x = z$ or $y = z.$
\end{remark}

\begin{definition}\label{completeDefinition}
If $C \subseteq Y$ is a subposet of $Y,$ we say $C$ is a \textit{complete} subset of $Y$ if for all $u, v \in C,$ if $u \le y \le v$ for some $y \in Y,$ then $y \in C.$
\end{definition}

\begin{remark}
If $Z$ is a complete subset of $Y,$ then it must also be a saturated subset of $Y.$ To see this, let $u,v \in Z$ such that $v$ covers $u$ in $Z$ and suppose $u \le y \le v$ for some $y \in Y.$ Then $y \in Z$ by definition, so $u = y$ or $v = y$ since $v$ covers $u$ in $Z.$ \end{remark}

\begin{lemma}\label{isomorphic}
If $f: X \to Y$ is a saturated embedding of posets, then $f$ is a poset isomorphism from $X$ onto $f(X),$ and $f(X)$ is a saturated subset of $Y.$
\end{lemma}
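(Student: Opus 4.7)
The plan is to verify the two claims separately. First I would show that $f$ restricts to a poset isomorphism $X \to f(X)$. By the Remark following the definition of poset embedding, any poset embedding is injective, and $f$ is surjective onto $f(X)$ by definition. Since $f(X) \subseteq Y$ carries the induced order as a subposet, the defining equivalence $x \le y \iff f(x) \le f(y)$ of a poset embedding automatically holds when the codomain is restricted to $f(X)$. So $f \colon X \to f(X)$ is a bijective poset embedding, hence a poset isomorphism.

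Second, I would show $f(X)$ is saturated in $Y$. Let $u, v \in f(X)$ with $v$ covering $u$ in $f(X)$, and write $u = f(x)$, $v = f(y)$ for (unique) $x, y \in X$. Since $f(x) < f(y)$ and $f$ is a poset embedding, we get $x < y$. The key intermediate step is to promote this to: $y$ covers $x$ in $X$. Indeed, if $x \le z \le y$ in $X$, applying $f$ yields $u = f(x) \le f(z) \le f(y) = v$ with $f(z) \in f(X)$, so because $v$ covers $u$ \emph{in the subposet} $f(X)$, either $f(z) = u$ or $f(z) = v$; injectivity of $f$ then forces $z = x$ or $z = y$.

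Once $y$ covers $x$ in $X$, the saturated embedding hypothesis directly gives that $f(y)$ covers $f(x)$ in $Y$, i.e., $v$ covers $u$ in $Y$, which is precisely what Definition \ref{saturatedDefinition} requires for $f(X)$ to be a saturated subset of $Y$.

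I do not anticipate any real obstacle here; the lemma is essentially a bookkeeping statement that unpacks the definitions of \emph{poset embedding}, \emph{cover}, and \emph{saturated subset}. The only subtle point is remembering, in the cover-pullback step, that the hypothesis ``$v$ covers $u$ in $f(X)$'' only constrains intermediate elements lying in $f(X)$, which is exactly why we apply $f$ to $z$ and compare the resulting images rather than attempting to compare $z$ directly with an unrelated element of $Y$.
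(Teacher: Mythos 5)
Your proposal is correct and follows essentially the same route as the paper's proof: observe that an embedding is automatically an isomorphism onto its image, pull a covering relation in $f(X)$ back to a covering relation in $X$ via that isomorphism, and then push it forward to $Y$ via the saturated-embedding hypothesis. The only difference is cosmetic — you spell out the cover-pullback step (the ``$x \le z \le y$'' check) in detail, whereas the paper asserts it in one line as a consequence of $f$ being an isomorphism onto $f(X)$.
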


\begin{proof}
Since $f$ maps $X$ onto $f(X)$ and is assumed to be an embedding of posets, it follows that $f$ is an isomorphism from $X$ onto $f(X)$. Suppose $f(x),f(y) \in f(X)$ such that $f(y)$ covers $f(x)$ in $f(X)$. Let $z \in Y$ such that $f(x) \le z \le f(y)$.  Since $f$ is a poset isomorphism from $X$ onto $f(X)$, $y$ covers $x$ in $X$.  Since $f$ is a saturated embedding from $X$ to $Y$, we have that $f(y)$ covers $f(x)$ in $Y$.  It follows that $z = f(x)$ or $z = f(y).$ Therefore, $f(X)$ is a saturated subset of $Y.$  \end{proof}

In this article, we show that, given a finite poset $X$, there exists a Noetherian UFD $A$ and a saturated embedding of posets $\psi: X \longrightarrow \Spec(A)$.  Lemma \ref{isomorphic} then implies that $X$ is isomorphic to a saturated subset of the spectrum of a Noetherian UFD. Similarly, we show that, given a finite poset $X$, there exists a quasi-excellent domain $B$ and a saturated embedding of posets $\vp: X \longrightarrow \Spec(B)$.  Lemma \ref{isomorphic} then implies that $X$ is isomorphic to a saturated subset of the spectrum of a quasi-excellent domain.

\section{Growing and Gluing}\label{Growing and Gluing}

%NOTE:  The intro here needs to be reworked since, for example, we no longer talk about chain extensions.

\medskip

In this section, we show that it is possible to ``grow" out of a minimal node as well as ``glue" minimal nodes as discussed in the introduction.  We begin by describing the growing process.  Let $B$ be a local ring satisfying some mild desirable conditions.  For the first result in this section, we construct a local ring $S$ and ensure that $\Spec(S)$ and $\Spec(B)$ are related in a specific way.
%In this section, we assume we are given a local ring $B$ satisfying some mild conditions.  Our goal is to construct a local ring $S$ such that, if $\Spec(B)$ contains a particular finite poset $X$, then $\Spec(S)$ contains a ``grown" version of $X$, as described in the Introduction.  The main result in this section, which we call The Growing Theorem, shows that we can do this in the case that the ``grown" version of $X$ has dimension one greater than that of $X$.  In Sections \ref{Posets} and \ref{MainResult}, we make these ideas more precise and we use induction to show that we can generalize to the case where the ``grown" version of $X$ has dimension more than one greater than the dimension of $X$.
In particular, to construct $S$, we focus on the minimal prime ideals of $B$.  
%Assume that $X$ has $m$ minimal nodes and $B$ has $m$ minimal prime ideals corresponding to the minimal nodes of $X$. 
 The ring $S$ will be of the form $B[[y,z]]/J$ where $y$ and $z$ are indeterminates and $J$ is a carefully chosen ideal of $B[[y,z]]$.   In particular, if $\{P_1, P_2, \ldots ,P_m\}$ are the minimal prime ideals of $B$, we choose $J$ such that $S$ has a height one prime ideal that corresponds to $P_1$ and such that, for $j = 2,3, \ldots ,m$, $S$ has a minimal prime ideal that corresponds to $P_j$.  In addition, the height one prime ideal of $S$ corresponding to $P_1$ contains as many minimal prime ideals of $S$ as we desire.  The idea is that the minimal prime ideals of $S$ and $B$ are the same, except that, in $S$, we have ``grown" some minimal prime ideals out of $P_1$.  
 %We also ensure that the only minimal prime ideals of $S$ are the ones contained in , if $Q$ is a minimal prime ideal of $S$ and does not correspond to one of the $m - k$ minimal prime ideals of $B$, then $Q$ is contained in exactly one of the aforementioned $k$ height one prime ideals of $S$.  
 
 %Since we use this result to construct rings with increasing dimension and desirable spectra, we refer to this result as The Growing Theorem (Theorem \ref{onegrowing}).

In Section \ref{GluingNodes}, we define precisely what we mean when we ``grow" nodes out of one minimal node of a poset.  We first describe a process in which, given a finite poset $Z$, we retract height zero nodes into a chosen height one node of $Z$, and the new poset will be called a retraction of $Z$.  The idea of a retracting height zero nodes into a height one node is the reverse of the growing process.  So, if the poset $X$ is a retraction of the poset $Z$, then, informally, $Z$ can be obtained from $X$ by growing some nodes out of a minimal node of $X$. In Section \ref{MainResults}, we show that, if $X$ is a retraction of $Z$ and $\vp:X \longrightarrow \Spec(B)$ a saturated embedding, then there is a local ring $S$ satisfying the same desirable conditions as $B$ and such that there is a saturated embedding $\psi: Z \longrightarrow \Spec(S)$.  As a consequence, if $X$ is isomorphic to a saturated subset of $\Spec(B)$, then there is a local ring $S$ satisfying the same desirable properties that $B$ satisfies such that $Z$ is isomorphic to a saturated subset of $\Spec(S)$.  This will enable us to construct rings containing larger and larger parts of our original finite poset in their spectrum.  Since the next theorem is crucial for constructing these rings with larger parts of our poset in their spectrum, we refer to it as The Growing Theorem.

%In other words, going from $\Spec(B)$ to $\Spec(S)$, we drop as many nodes as we desire from $k$ minimal nodes of $\Spec(B)$.  

%We use this theorem to grow the posets as described in the Introduction, and thus we call our main result of this section The Growing Theorem.

%To prove The Growing Theorem, we begin with an elementary lemma.

%\begin{lemma}\label{minimalprimes}
%Let $B$ be a Noetherian ring and let $x_1,x_2, \ldots ,x_n$ with $n \geq 1$ be indeterminates.  Let $A = B[[x_1, x_2,  \ldots ,x_n]]$.  Suppose that $\Min (B) = \{P_1, P_2, \ldots ,P_r\}$.  Then $\Min (A) = \{P_1A, P_2A, \ldots ,P_rA\}$.
%\end{lemma}

%\begin{proof}
%Note that, since $A/P_iA \cong B/P_i[[x_1, \ldots x_n]]$, we have $P_iA \in \Spec(A)$ for every $i$.  

%Suppose $Q \in \Spec(A)$ with $Q \subseteq P_iA$ for some $i$.  Then $Q \cap B \subseteq P_iA \cap B = P_i$, and so $Q \cap B = P_i$.  It follows that $P_i A = (Q \cap B)A \subseteq Q$ and we have $Q = P_iA$.  Therefore, $P_iA$ is a minimal prime ideal of $A$ for every $i = 1,2, \ldots , r$.

%Now suppose $J \in \Min(A)$.  Then there is an $i$ such that $P_i \subseteq J \cap B$, and so $P_iA \subseteq (J \cap B)A \subseteq J$.  As $J$ is minimal, $P_i A = J$.
%\end{proof}

%PUT IN COMMENT ABOUT WHAT (P,y,z)A MEANS.

%We are now ready to prove The Growing Theorem.

\begin{theorem} (The Growing Theorem) \label{onegrowing}
Let $B$ be a local ring containing an infinite field $K$.
%and let $T= \widehat{B}$ be the completion of $B$ at its maximal ideal.  
%Suppose that, if $I \in \Min (T)$, then $T/I$ is a RLR.
Let $\Min(B) = \{P_1, P_2 \ldots ,P_m\}$.  Let $n$ be a positive integer, let $y$ and $z$ be indeterminates, and let $A = B[[y,z]]$.  Then there are distinct prime ideals $Q_{1}, Q_{2} \ldots ,Q_{n}$ of $A$ such that

%\medskip
\vspace{.1cm}

\begin{enumerate}
%\item ht$(Q_{ij}) = 1$ for every $i = 1,2, \ldots ,k$ and for every $j = 1,2, \ldots ,n_i$. \\
\item For every $i = 1,2, \ldots ,n$, we have $Q_{i} \subseteq (P_{j}, y, z)A$ if and only if $j = 1$. 
\item For every $i = 1,2, \ldots ,n$, the prime ideal $(P_{1}, y, z)A/Q_{i}$ in the ring $A/Q_{i}$ has height one, and 
\item If $J = (\cap_{i=1}^{n}Q_{i}) \cap P'_{2} \cap \cdots \cap P'_m$ where $P_j' = (P_j, y,z)A$ then the minimal prime ideals of $S = A/J$ are $\{Q_1/J, Q_2/J, \ldots ,Q_n/J, P'_2/J, \ldots P'_m/J\}$, and $(P_1,y,z)/J$ is a height one prime ideal of $S$ containing exactly $n$ minimal prime ideals of $S$, namely $\{Q_1/J, Q_2/J, \ldots ,Q_n/J\}$.
%\item If $J = (\cap_{i=1}^{n}Q_{i}) \cap P'_{2} \cap \cdots \cap P'_m$ where $P_j' = (P_j, y,z)A$ and $S = A/J$, then, if $Q$ is a minimal prime ideal of $\widehat{S}$, we have that $\widehat{S}/Q$ is a RLR.  
%\item If $\Phi: F \longrightarrow \Spec(B)$ is a saturated embedding, then $\Psi: F \longrightarrow \Spec(S)$ given by $\Psi(u) = (\Phi(u)A + yA + zA)/J$ is a saturated embedding.  And I think we need something here about ``growing" the poset so that there is a saturated embedding from a larger part of the ``desired poset" to $\Spec(S)$.
\end{enumerate}
\end{theorem}

%\vspace{.05cm}

\begin{proof}
Since $B$ contains $K$, so does $A$.  Let $\beta_{i}$ for $i = 1,2, \ldots ,n$ be distinct elements of $K$.  For $i = 1,2, \ldots ,n$, define $Q_{i} = (P_1, y + \beta_{i}z)A$.  Then, $A/Q_{i} \cong B/P_1[[z]]$, and so $Q_{i}$ is a prime ideal of $A$.  If $Q_{i} = Q_{\ell}$ for some $i$ and for some $\ell$, then, in $A/P_1A \cong B/P_1[[y,z]]$, we have $(y + \beta_{i}z) = (y + \beta_{\ell}z)$.  Hence $y + \beta_{i}z - y - \beta_{\ell}z = z(\beta_{i} - \beta_{\ell}) \in (y + \beta_{i}z)$.  If $i \neq \ell,$ then we have $z,y \in (y + \beta_{i}z)$, a contradiction.
%So $y + \beta_{ij}z = \alpha y + \alpha \beta_{i\ell}z$ for some unit $\alpha$.  Hence, $y(1 - \alpha) = z(\alpha \beta_{i\ell} - \beta_{ij})$.  For the degree one terms to be equal on both sides of this equation, the constant term of $\alpha$ must be $1$.  So, $\alpha = 1 + m$ where $m \in (y,z)$.  So $y(-m) = z(m\beta_{i\ell} + \beta_{i\ell} - \beta_{ij})$.  Since the degree one term on the left hand side is zero, the degree one term on the right hand side is also zero, 
%So $\beta_{i\ell} = \beta_{ij},$.  
It follows that $i = \ell$.  
%If $i \neq r$, then there is an element $b \in P_i$ with $b \not\in P_r$, and so $Q_{ij} \neq Q_{rs}$.  
Therefore, the $Q_{i}$'s are distinct.

%By Lemma \ref{minimalprimes}, the minimal prime ideals of $A$ are $\{P_1A, \ldots ,P_kA, P_{k + 1}A, \ldots ,P_mA\}$.  Note that the only minimal prime ideal of $A$ contained in $Q_{ij}$ is $P_iA$.  In the ring $A/P_iA \cong B/P_i[[y,z]]$, the prime ideal $(y + \beta_{ij}z)$ has height one, and so ht$Q_{ij} = 1$ in the ring $A$.

By the definition of $Q_{i}$, we have that $Q_{i} \subseteq (P_{1}, y, z)A$.  If $j \neq 1$, then there is a $\gamma \in P_j$ with $\gamma \not\in P_{1}$, and so $Q_{i} \not\subseteq (P_{j}, y, z)A$.  Since the prime ideal $(P_{1}, y, z)A/Q_{i}$ in the domain $A/Q_{i}$ is nonzero and principal, it has height one.
%Suppose $Q$ is a minimal prime ideal of $\widehat{S}$.  Then $Q \cap S$ is either $Q_{i}/J$ for some $i = 1,2, \ldots ,n$ or $Q \cap S$ is in $\{P'_{2}/J, \ldots , P'_m/J\}$.  If $Q \cap S = Q_{i}/J$, then $S/(Q \cap S) \cong B/P_1[[z]]$, and so the completion of $S/(Q \cap S)$ is isomorphic to $T/P_1T[[z]]$.  By Lemma \ref{minimalprimes}, the minimal prime ideals of $T/P_1T[[z]]$ are of the form $I/P_1T[[z]]$ where $I \in \Min(T)$.  The ring $T/P_1T[[z]]$ modulo $I/P_1T[[z]]$ is isomorphic to $T/I[[z]]$, which is a RLR since $T/I$ is a RLR.  It follows that the completion of $S/(Q \cap S)$ modulo any of its minimal prime ideals is a RLR.  Noting that $Q/(Q \cap S)\widehat{S}$ is a minimal prime ideal of $\widehat{S}/(Q \cap S)\widehat{S}$, we have that $\widehat{S}/Q$ is a RLR.  If $Q \cap S = P'_j/J$ for some $j = 2, \ldots m$, then $S/(Q \cap S) \cong B/P_j$, and so the completion of $S/(Q \cap S)$ is isomorphic to $T/P_jT$. If $I/P_jT$ is a minimal prime ideal of $T/P_jT$, then $T/P_jT$ modulo $I/P_jT$ is isomorphic to $T/I$ where $I$ is a minimal prime ideal of $T$.  By hypothesis, this is a RLR, and so the completion of $S/(Q \cap S)$ modulo any of its minimal prime ideals is a RLR.  Since $Q/(Q \cap S)\widehat{S}$ is a minimal prime ideal of $\widehat{S}/(Q \cap S)\widehat{S}$, we have that $\widehat{S}/Q$ is a RLR. 

The last condition follows by the first two conditions and the definition of $S$.
\end{proof}

%Our goal is to apply Theorem \ref{gluing} repeatedly to ``glue" together the minimal prime ideals of $B$ in any prescribed way while not altering the spectrum of $B$ at its nonminimal prime ideals.  To do this, we first show that the rings $S$ and $B$ in Theorem \ref{gluing} are related in several nice ways.  In particular, we show they have the same completion, and that the sets $\{P \in \Spec(B) \, | \, P \neq Q_1 \mbox{ and } P \neq Q_2\}$ and $\{P \in \Spec(S) \, | \, P \neq Q_1 \cap S\}$ are isomorphic as posets.  To show that they have the same completion we use the following result.

%We are now ready to state and prove The Gluing Theorem.  In our proof, we use Theorem \ref{gluing} and induct on the number of minimal prime ideals of $B$.  We also use the following observation.  Suppose $S$ is a subring of the ring $B$ and $P$ is a prime ideal of $B$ with positive height satisfying $(S \cap P)B = P$.  Then $S \cap P$ is not a minimal prime ideal of $S$.  To see this, observe that since $P$ has positive height, it strictly contains a minimal prime ideal $Q$ of $B$.  If $S \cap P$ is a minimal prime ideal of $S$, then $S \cap P = S \cap Q$ and so $P = (S \cap P)B = (S \cap Q)B \subseteq Q$, a contradiction.

The next result is taken from \cite{GluingPaper}.  Suppose $(S,M)$ is a reduced local ring satisfying some mild conditions.  Let $\Min(S)$ be partitioned into $m \geq 1$ subcollections $C_1, \ldots ,C_m$.  Theorem \ref{biggluing} shows that there exists a subring $(B, B \cap M)$ of $S$ such that $S$ satisfies the same mild conditions as $B$ and such that $B$ has exactly $m$ minimal prime ideals.  Specifically, if $Q,Q'$ are minimal prime ideals of $S$ then $Q \cap B = Q' \cap B$ if and only if $Q,Q' \in C_i$ for some $i = 1,2, \ldots ,m$. In addition, the spectrum of $B$ when restricted to the prime ideals of positive height is isomorphic to the spectrum of $S$ when restricted to the prime ideals of positive height.  So we can think of the spectrum of $B$ to be the same as the spectrum of $S$ except that, to get the minimal prime ideals of $B$, the minimal prime ideals of $S$ are ``glued" together based on the partition of $\Min(S)$.  It is worth noting that there are no restrictions on the partition.  That is, one can glue together the minimal primes in any prescribed way.  For this reason, we refer to Theorem \ref{biggluing} as The Gluing Theorem.

\medskip

\begin{theorem}[\cite{GluingPaper}, Theorem 2.14](The Gluing Theorem)\label{biggluing}
Let $(S,M)$ be a reduced local ring containing the rationals with $S/M$ uncountable and $|S| = |S/M|$.  
%Suppose $B$ contains the rationals and $\Min(B) = \{Q_1, Q_2, \ldots ,Q_n\}$ with $n \geq 2$, and 
Suppose $\Min(S)$ is partitioned into $m \geq 1$ subcollections $C_1, \ldots ,C_m$. Then there is a reduced local ring $B \subseteq S$ with maximal ideal $B \cap M$ such that 
\medskip
\begin{enumerate}
\item $B$ contains the rationals, 
\item $\widehat{B} = \widehat{S}$, 
\item $B/(B \cap M)$ is uncountable and $|B| = |B/(B \cap M)|$, 
\item For $Q, Q' \in \Min(S)$, $Q \cap B = Q' \cap B$ if and only if there is an $i \in \{1,2, \ldots ,m\}$ with $Q \in C_i$ and $Q' \in C_i$, 
\item The map $f:\Spec(S) \longrightarrow \Spec(B)$ given by $f(P) = B \cap P$ is onto and, if $P$ is a prime ideal of $S$ with positive height, then $f(P)S = P$.  In particular, if $P$ and $P'$ are prime ideals of $S$ with positive height, then $f(P)$ has positive height and $f(P) = f(P')$ implies that $P = P'$. 
%\item $\{P \in \Spec(B) \, | \, P \not\in \Min(B)\}$ and $\{P \in \Spec(S) \, | \, P \not\in \Min(S)\}$ are isomorphic as posets.
\end{enumerate}
\end{theorem}

Once we have a local ring $B$ and a saturated embedding from our poset $X$ to $\Spec(B)$, we use $B$ to construct a UFD for which there is also a saturated embedding from $X$ to the spectrum of the UFD.  The elements of the poset $X$ will ultimately correspond to prime ideals of the UFD that have small coheight.  To construct our UFD, we choose $n$ large enough so that $B[[x_1, \ldots ,x_n]]$ satisfies the property that all associated prime ideals of all principal ideals have large coheight. Theorem \ref{growcoheight} ensures we can do this. This property allows us to construct a subring of $B[[x_1, \ldots ,x_n]]$ that is a local UFD.  In addition, we are able to ensure that the part of the spectrum of the UFD of small coheight and the part of the the spectrum of $B[[x_1, \ldots ,x_n]]$ of small coheight are isomorphic as posets.  From this, we conclude in Section \ref{MainResults} that there is a saturated embedding from $X$ to the spectrum of the UFD.

\begin{theorem}\label{growcoheight}
Let $(B',M')$ be a local ring and let $x_1, x_2, \ldots ,x_n$ be indeterminates where $n \geq 1$.  If $f \in B'[[x_1, x_2, \ldots ,x_n]]$ is a regular element of $B'[[x_1, x_2, \ldots ,x_n]]$ and $Q \in \Spec (B'[[x_1, x_2, \ldots ,x_n]])$ with $Q \in \Ass (B'[[x_1, x_2, \ldots ,x_n]]/fB'[[x_1, x_2, \ldots ,x_n]])$ then dim$(B'[[x_1, x_2, \ldots ,x_n]]/Q) \geq n - 1$.
\end{theorem}

\begin{proof}
Since $f$ is regular, depth$(B'[[x_1, x_2, \ldots ,x_n]]/fB'[[x_1, x_2, \ldots ,x_n]]) = \mbox{depth}(B'[[x_1, x_2, \ldots ,x_n]]) - 1 \geq n - 1$. Applying Chapter 6, Theorem 29 from \cite{matsumuracommalg} to the ring $$A = (B'[[x_1, x_2, \ldots ,x_n]]/fB'[[x_1, x_2, \ldots ,x_n]]),$$ we have $n - 1 \leq \mbox{depth}(A) \leq \mbox{dim}(A/Q) = \mbox{dim}(B'[[x_1, x_2, \ldots ,x_n]]/Q)$.
\end{proof}

We now turn our attention to embedding our poset $X$ into the spectrum of a quasi-excellent domain. Recall that a Noetherian ring $R$ is said to be quasi-excellent if it satisfies the following two conditions:

\begin{enumerate}
    \item for all $P \in \Spec{(R)}$, the ring $\widehat{R} \otimes_R L$ is regular for every finite field extension $L$ of $R_P/PR_P$;
    \item Reg$(B) \subset \Spec(B)$ is open for every finitely generated $R$-algebra $B$.
\end{enumerate}

\begin{remark}\label{quasiexc}
Suppose that $R$ is a semi-local ring. Then by Chapter 13, Theorem 76 and Lemma 33.4 in \cite{matsumuracommalg}, if $R$ satisfies condition (1), it also satisfies condition (2).  Thus, when a ring $R$ is semi-local, to show that it is quasi-excellent, it suffices to verify that condition (1) holds.
\end{remark}

The next result shows that if the ring $B$ in Theorem \ref{onegrowing} is quasi-excellent, then the ring $S$ constructed in Theorem \ref{onegrowing} is quasi-excellent.

\begin{proposition} \label{growingexcellent}
Suppose the ring $B$ in Theorem \ref{onegrowing} satisfies the additional condition that it is quasi-excellent. Then the ring $S$ constructed in Theorem \ref{onegrowing} is also quasi-excellent.
\end{proposition}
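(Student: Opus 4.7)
The plan is to realize $S$ as a quotient of a ring already known to be quasi-excellent, and then appeal to the two standard preservation properties: quasi-excellence passes to quotients, and, for local rings containing a field, it passes to formal power series extensions.

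First I would establish that the ambient ring $A = B[[y,z]]$ is quasi-excellent. Since $B$ is by hypothesis a quasi-excellent Noetherian local ring containing the infinite field $K$, Rotthaus's theorem on the preservation of quasi-excellence under formal power series extensions of quasi-excellent local rings yields that $B[[y]]$ is quasi-excellent, and iterating gives that $A = (B[[y]])[[z]]$ is quasi-excellent as well. An explicit reference (e.g., Rotthaus's paper on the completion of semilocal quasi-excellent rings, or the corresponding treatment in Matsumura) would be cited here.

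Next, since $S = A/J$ is a quotient of $A$, I would invoke the elementary fact that every homomorphic image of a quasi-excellent ring is quasi-excellent. For $P \in \Spec(S)$ with preimage $\widetilde{P} \in \Spec(A)$ containing $J$, one has $\widehat{S_P} = \widehat{A_{\widetilde{P}}}/J\widehat{A_{\widetilde{P}}}$, and a direct tensor computation shows that the formal fiber of $S_P$ at a prime $Q$ coincides with the formal fiber of $A_{\widetilde{P}}$ at the lift $\widetilde{Q}$: the residue fields $\kappa(Q)$ and $\kappa(\widetilde{Q})$ agree, and the extra tensor factor of $A_{\widetilde{P}}/JA_{\widetilde{P}}$ collapses against $\kappa(\widetilde{Q})$. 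Geometric regularity of the formal fibers of $A$ therefore transfers verbatim to $S$, and this gives the conclusion.

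The only substantive obstacle is the first step, which depends on the nontrivial theorem of Rotthaus asserting that formal power series over a quasi-excellent local ring (containing a field) are quasi-excellent; this is a well-established black-box result and the rest of the argument is routine.
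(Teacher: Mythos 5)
Your proposal is correct and follows exactly the same route as the paper: cite Rotthaus to conclude $A = B[[y,z]]$ is quasi-excellent, then use that quasi-excellence passes to quotients. The paper leaves the quotient-preservation fact as a one-line assertion, whereas you unwind the formal-fiber computation explicitly; that extra detail is accurate but not needed beyond the standard black-box fact.
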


\begin{proof}
It is shown in \cite{RotthausGerman} that if $B$ is a quasi-excellent local ring, then so is $B[[y]]$.  It follows that $A$ in Theorem \ref{onegrowing} is quasi-excellent.  Since a quotient ring of a quasi-excellent ring is also quasi-excellent, the result follows.
\end{proof}

We now argue that if the ring $S$ in Theorem \ref{biggluing} is quasi-excellent, then the ring $B$ constructed in Theorem \ref{biggluing} is quasi-excellent. We first note that, to show that condition (1) in the definition of quasi-excellent holds, we can restrict to purely inseparable finite field extensions (see, for example, Remark 1.3 in \cite{Rotthaus}). Hence, if a local ring $R$ contains the rationals, then $R$ satisfies condition (1) in the definition for quasi-excellent if, for all $P \in \Spec{(R)}$, the ring $\widehat{R} \otimes_R (R_P/PR_P)$ is a regular ring.
Thus, to show that $R$ satisfies condition (1) in the definition of quasi-excellent, it suffices to show that, for all $P \in \Spec{(R)}$ and for all $Q \in \Spec{(\widehat{R})}$ satisfying $Q \cap R = P$, we have that $(\widehat{R}/P\widehat{R})_Q$ is a RLR.

\begin{proposition}\label{excellentgluing}
Suppose the ring $S$ in Theorem \ref{biggluing} satisfies the additional condition that it is quasi-excellent. Then the ring $B$ constructed in Theorem \ref{biggluing} is also quasi-excellent.
\end{proposition}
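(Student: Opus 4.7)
The plan is to invoke the criterion stated just before the proposition: since $B$ contains the rationals, it suffices to show that for every $P \in \Spec(B)$ and every $Q \in \Spec(\widehat{B})$ with $Q \cap B = P$, the ring $(\widehat{B}/P\widehat{B})_Q$ is a RLR. I will fix such $P$ and $Q$, set $P' := Q \cap S$, and use property (2) of Theorem \ref{biggluing} to identify $\widehat{B}$ with $\widehat{S}$, so that $Q \in \Spec(\widehat{S})$ and $P' \cap B = P$.

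The argument will split on the height of $P$. If $\height(P) > 0$, I first observe that $P'$ must have positive height in $S$: otherwise $P' \in \Min(S)$, and then properties (4) and (5) of Theorem \ref{biggluing} together force $P = B \cap P'$ to be a minimal prime of $B$, a contradiction. Property (5) then yields $PS = f(P')S = P'$, so $P\widehat{B} = P\widehat{S} = P'\widehat{S}$, and the ring in question becomes $(\widehat{S}/P'\widehat{S})_Q$, which is a RLR by the quasi-excellence of $S$ at $P'$.

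The interesting case is $P \in \Min(B)$. By the contrapositive of property (5), $P'$ must also lie in $\Min(S)$. Here $P\widehat{S}$ and $P'\widehat{S}$ need not coincide, so at first glance $(\widehat{B}/P\widehat{B})_Q$ and $(\widehat{S}/P'\widehat{S})_Q$ are different quotients of the same ring $\widehat{S}_Q$. The key observation I will use is that after localizing at $Q$, the larger ideal already collapses: since $S$ is reduced and $P'$ is minimal, $S_{P'}$ is a field and $P'S_{P'} = 0$, so for any $p \in P'$ there exists $s \in S \setminus P'$ with $sp = 0$ in $S$. Since $P' = Q \cap S$, such $s$ lies outside $Q$ and is therefore a unit in $\widehat{S}_Q$, forcing $p = 0$ in $\widehat{S}_Q$. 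Hence $P'\widehat{S}_Q = 0$, and a fortiori $P\widehat{S}_Q = 0$, giving $(\widehat{B}/P\widehat{B})_Q = \widehat{S}_Q = (\widehat{S}/P'\widehat{S})_Q$, a RLR by the quasi-excellence of $S$. The main obstacle is spotting this collapse; once it is noticed, the minimal-prime case reduces to the hypothesis on $S$ just as cleanly as the positive-height case.
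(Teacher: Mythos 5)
Your proof is correct and follows the same overall skeleton as the paper's: reduce to showing $(\widehat{B}/P\widehat{B})_Q$ is a RLR for $P = Q\cap B$, split on whether $\height P$ is positive or zero, and in the positive-height case use $PS = P'$ from property (5). The genuine difference is in the minimal-prime case. The paper's route is to observe that, because $S$ is both reduced and quasi-excellent, its completion $T = \widehat S$ is reduced; it then shows $P_S T$ lies inside every minimal prime of $T$ contained in $Q$, so $P_S T_Q$ is contained in the (zero) intersection of the minimal primes of the reduced ring $T_Q$. Your argument avoids the non-trivial fact that reduced quasi-excellent rings have reduced completions: you use only that $S$ itself is reduced, so $S_{P'}$ is a field and every $p \in P'$ is killed by some $s \in S\setminus P' = S\setminus (Q\cap S)$, which is a unit in $\widehat S_Q$. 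That is a cleaner and more elementary way to see $P'\widehat S_Q = 0$, and it trades a global structural fact about $T$ for a local computation in $S$. One small caveat: your justification that $\height P' > 0$ whenever $\height P > 0$ (the contrapositive of ``minimal primes contract to minimal primes'') is not quite a consequence of properties (4) and (5) of Theorem \ref{biggluing} alone; it really needs a Going-Down argument using $\widehat B = \widehat S$ (this is exactly Lemma \ref{commonCompletion}(1) later in the paper), or an appeal to the explicit construction in the cited source. The paper's proof finesses the same point with the phrase ``by construction,'' so you are in good company, but the attribution to (4) and (5) as stated is a small overclaim.
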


\begin{proof}
Let $T = \widehat{B} = \widehat{S}$. We show that if $P \in \Spec{(B)}$ and $\widehat{P} \in \Spec{(T)}$ such that $\widehat{P} \cap B = P$, then $(T/PT)_{\widehat{P}}$ is a RLR.

First, suppose $P \in \Spec{(B)}$ such that $P$ has positive height, and let $\widehat{P} \in \Spec{(T)}$ such that $\widehat{P} \cap B = P$.  Let $P_S = S \cap \widehat{P}$.  Then $P_S \cap B = P$.  Since $P$ has positive height, we have, by construction, that $P_S$ has positive height and $PS = P_S$.  It follows that $PT = P_ST$ and so $(T/PT)_{\widehat{P}} = (T/P_ST)_{\widehat{P}}$.  Since $S$ is quasi-excellent, $(T/P_ST)_{\widehat{P}}$ is a RLR and so $(T/PT)_{\widehat{P}}$ is as well.

Now suppose $P$ is a minimal prime ideal of $B$ and let $\widehat{P} \in \Spec{(T)}$ such that $\widehat{P} \cap B = P$.  Let $P_S = S \cap \widehat{P}$. If $P_S$ has positive height, then, by construction, $B \cap P_S = P$ has positive height, a contradiction.  Hence, $P_S$ is a minimal prime ideal of $S$.  If $Q$ is a minimal prime ideal of $T$ contained in $\widehat{P}$, then $Q \cap S \subseteq \widehat{P} \cap S = P_S$, and so $Q \cap S = P_S$.  In particular, $P_ST \subseteq Q$ for all minimal prime ideals $Q$ of $T$ that are contained in $\widehat{P}$.  Since $S$ is reduced and quasi-excellent, $\widehat{S} = T$ is reduced.  Thus, $T_{\widehat{P}}$ is reduced and so the intersection of its minimal prime ideals is the zero ideal.  Therefore, $P_ST_{\widehat{P}}$ is the zero ideal of $T_{\widehat{P}}$.  As $S$ is quasi-excellent, $(T/P_ST)_{\widehat{P}} \cong T_{\widehat{P}}$ is a RLR.  Noting that $PT \subseteq P_ST$, we have that $PT_{\widehat{P}}$ is also the zero ideal of $T_{\widehat{P}}$ and so $(T/PT)_{\widehat{P}} \cong T_{\widehat{P}}$ is a RLR.
It follows that $B$ satisfies condition (1) in the definition of quasi-excellent.  Since $B$ is a local ring, it is quasi-excellent by Remark \ref{quasiexc}.
\end{proof}

Let $X$ be a finite poset.  In Section \ref{GluingNodes}, we define precisely what we mean for $Y$ to be a poset obtained from $X$ by gluing some minimal nodes together.  The resulting poset $Y$ will be called a height zero gluing of $X$.  Suppose that $Z$ is a nonempty finite poset, $Y$ is a height zero gluing of $Z$,  $(S,M)$ is a local ring satisfying some mild conditions, and $\psi: Z \rightarrow \Spec(S)$ is a saturated embedding.  In Section \ref{MainResults}, we use Theorem \ref{biggluing} and Proposition \ref{excellentgluing} to show that there is a subring $(B,B \cap M)$ of $S$ such that $B$ satisfies the same mild conditions as $S$ and such that there is a saturated embedding $\vp: Y \rightarrow \Spec(B)$.  In other words, we show that if $Y$ is a height zero gluing of a poset that is isomorphic to a saturated subset of the spectrum of a ring satisfying our nice properties, then $Y$ is also isomorphic to a saturated subset of the spectrum of a ring satisfying the same nice properties.  This result enables us to construct nice rings that contain height zero gluings of our posets in their spectra.

%\newpage

\section{The UFD Theorem}\label{UFDTheorem}

The last step of our procedure is to construct a Noetherian UFD whose spectrum contains a saturated subset that is isomorphic to the given finite poset.  In this section, we describe the construction of this UFD.  We start with a local ring $B$ that contains the rationals and that satisfies other mild conditions. The spectrum of the UFD that we construct will inherit important properties from the spectrum of $B$.  In particular, given $0 \leq h \leq \mbox{dim}B - 2$, the set of prime ideals of our UFD of coheight at most $h$ and the set of prime ideals of $B$ of coheight at most $h$ will be isomorphic when viewed as posets.  The construction is similar to the procedure used in \cite{GluingPaper} to prove The Gluing Theorem (Theorem \ref{biggluing}).  We start with the rationals and we successively adjoin uncountably many elements of $B$ to form an increasing chain of subrings of $B$ that satisfy desirable properties, the most important property being that they are all UFD's.  Our final UFD will be the union of this increasing chain.  We use Proposition \ref{completionsame} below to show that it is Noetherian and its completion is the same as that of $B$.  We also ensure that the UFD contains generating sets for carefully chosen prime ideals of $B$. This property will ultimately help us show that the spectrum of $B$ and the spectrum of the UFD, when restricted to prime ideals of coheight at most $h$, are isomorphic.

\begin{proposition}[\cite{GluingPaper}, Proposition 2.6]\label{completionsame}
Let $(B,M)$ be a local ring and let $T = \widehat{B}$. Suppose $(S,S \cap M)$ is a quasi-local subring of $B$ such that the map $S \longrightarrow B/M^2$ is onto and $IB \cap S = I$ for every finitely generated ideal $I$ of $S$.  Then $S$ is Noetherian and $\widehat{S} = T$.  Moreover, if $B/M$ is uncountable and $|B| = |B/M|$, then $S/(S \cap M)$ is uncountable and $|S| = |S/(S \cap M)|$.
\end{proposition}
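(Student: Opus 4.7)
The plan is to prove the proposition in three stages. First, I would upgrade the given surjectivity of $S \to B/M^2$ to surjectivity of $S \to B/M^n$ for every $n \geq 1$ by induction on $n$. Given $b \in B$ with $b - s \in M^n$ for some $s \in S$, write $b - s = \sum m_{i,1} \cdots m_{i,n}$ with $m_{i,j} \in M$, use the $n = 2$ case to pick $t_{i,j} \in S \cap M$ with $m_{i,j} - t_{i,j} \in M^2$, and observe that $\sum t_{i,1} \cdots t_{i,n}$ differs from $\sum m_{i,1} \cdots m_{i,n}$ by an element of $M^{n+1}$, advancing the induction.

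Second, since $B$ is Noetherian (by the paper's convention that local means Noetherian), $M$ is finitely generated, say $M = (m_1, \ldots, m_r)B$. Using surjectivity of $S \to B/M^2$, choose $s_i \in S \cap M$ with $s_i \equiv m_i \pmod{M^2}$, and apply Nakayama's lemma in $B$ to conclude $(s_1, \ldots, s_r)B = M$. The pivotal step is then to set $I = (s_1, \ldots, s_r)S$, a finitely generated ideal of $S$ with $I \subseteq S \cap M$ and $I^n B = M^n$ for all $n$. The hypothesis applied to $I^n$ gives $I^n = I^n B \cap S = M^n \cap S$, while trivially $I^n \subseteq (S \cap M)^n \subseteq M^n \cap S$. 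Hence $(S \cap M)^n = I^n = M^n \cap S$ for every $n$; in particular, $S \cap M = I$ is finitely generated. Combined with stage one, we get $S/(S \cap M)^n \cong B/M^n$ for every $n$, so $\widehat{S} \cong \varprojlim B/M^n = T$.

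For Noetherianness, I would exploit that $B \to T$ is faithfully flat (since $B$ is Noetherian local). Given any ideal $J \subseteq S$, the ideal $JT$ is finitely generated in the Noetherian ring $T$, so there exist $j_1, \ldots, j_k \in J$ generating $JT$. Setting $I' = (j_1, \ldots, j_k)S$, faithful flatness gives $I'T \cap B = I'B$, and the hypothesis gives $I'B \cap S = I'$, hence $I'T \cap S = I'$; then $J \subseteq JT \cap S = I'T \cap S = I'$, forcing $J = I'$. The ``moreover'' clause is immediate from $S/(S \cap M) \cong B/M$ together with the sandwich $|B/M| = |S/(S \cap M)| \le |S| \le |B| = |B/M|$. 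The only step requiring real cleverness is the identification $(S \cap M)^n = M^n \cap S$: no purely formal manipulation seems to prove this directly, and the trick is to use Noetherianness of $B$ and Nakayama to find a finite set of elements in $S \cap M$ that generates $M$ as a $B$-ideal, thereby converting the claim into an instance of the finitely-generated hypothesis.
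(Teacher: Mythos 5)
Your proof is correct, and the heart of it---generating $M$ by finitely many elements $s_1,\ldots,s_r$ of $S\cap M$ via Nakayama, then using the hypothesis on $I^n=(s_1,\ldots,s_r)^nS$ to force $(S\cap M)^n=M^n\cap S$---is exactly the pivot that makes the completion identification $\widehat S=\varprojlim S/(S\cap M)^n\cong\varprojlim B/M^n=T$ work. All three stages check out: the induction in stage one correctly upgrades surjectivity mod $M^2$ to surjectivity mod $M^n$ for all $n$, and stage two's sandwich $I^n\subseteq(S\cap M)^n\subseteq M^n\cap S=I^nB\cap S=I^n$ is airtight. One small remark: in stage three you do not actually need to pass to $T$ or invoke faithful flatness of $B\to T$. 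Given an arbitrary ideal $J$ of $S$, the extension $JB$ is finitely generated in the Noetherian ring $B$ and hence generated by finitely many $j_1,\ldots,j_k\in J$; setting $I'=(j_1,\ldots,j_k)S$ gives $I'B=JB$, so by the hypothesis $J\subseteq JB\cap S=I'B\cap S=I'$, and since $I'\subseteq J$ trivially, $J=I'$ is finitely generated. This shortcut keeps the whole argument inside $B$. Your route through $T$ is equally valid (and mirrors the logic one would use in the classical setting where the ambient ring is already complete), but it is a detour here since $B$ itself is Noetherian. The ``moreover'' clause is handled correctly.
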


Most of the material in this section is based on the ideas in \cite{heitmannUFD}.  The following definition is taken from \cite{heitmannUFD}.

%\begin{lemma}\label{MinLemma}
%Let $B'$ be a local ring with completion $T'$.  If $P' \in \Spec(B')$ with $P' \in \Ass(B')$, then $P'T' \subseteq Q'$ for some $Q' \in \Ass(T')$.  
%\end{lemma}

%\begin{proof}
%Since $P' \in \Ass(B')$, we have that $P' = \mbox{Ann}(x)$ for some nonzero $x$ in $B'$.  Let $P' = (p_1, \ldots ,p_k)$, and let $y \in P'T'$.  Then $y = p_1t_1 + \cdots + p_kt_k$ for some $t_i \in T'$, and $xy = xp_1t_1 + \cdots + xp_kt_k = 0.$  So $y = 0$ or $y$ is a zero divisor of $T'$.  Hence, $P'T'$ consists only of zero divisors.  By the Prime Avoidance Theorem, $P'T' \subseteq Q'$ for some $Q' \in \Ass(T')$.
%\end{proof}

%\medskip

%\noindent NOTE:  For the paper, make the above shorter -- maybe get rid of the first lemma entirely.

%\medskip

\begin{definition}
Let $(T,M)$ be a complete local ring and let $(R, M \cap R)$ be a quasi-local unique factorization domain contained in $T$ satisfying:
\begin{enumerate}
\item $|R| \leq \sup(\aleph_0, |T/M|)$ with equality only if $T/M$ is countable,
\item $Q \cap R = (0)$ for all $Q \in \Ass(T)$, and
\item if $t \in T$ is regular and $P \in \Ass(T/tT)$, then ht$(P \cap R) \leq 1$.
\end{enumerate}
Then $R$ is called an \textit{N-subring of} $T$.
\end{definition}

We modify the above definition for our purposes.  In particular, we construct subrings of the ring $B$, which is not necessarily complete.  So, instead of requiring our rings to be N-subrings, we require them to be PN-subrings as defined below.

\begin{definition}
Let $(B,M)$ be a local domain with $B/M$ uncountable, and let $(R, M \cap R)$ be an infinite quasi-local unique factorization domain contained in $B$ such that $|R| < |B/M|$ and, if $b \in B$ and $P \in \Ass(B/bB)$, then ht$(P \cap R) \leq 1$.  Then $R$ is called a \textit{Pseudo-N-Subring of} $B$, or a \textit{PN-subring of} $B$.
\end{definition}

Note that, if $B$ in the above definition is complete and if $(R,M \cap R)$ is a PN-subring of $B$, then $(R,M \cap R)$ is also an N-subring of $B$.

The next result describes a sufficient condition on an element $x$ of $B$ so that, if $R$ is a PN-subring of $B$ then the ring $R[x]_{(R[x] \cap M)}$ is also a PN-subring of $B$.  This will allow us to successively adjoin elements while maintaining the PN-subring properties.  Note that if $P$ is a prime ideal of $B$, then there is an injective homomorphism $R/(R \cap P) \longrightarrow B/P$ and so we can view $R/(R \cap P)$ as a subring of $B/P$.

\begin{lemma}\label{adjoin}
Let $(B,M)$ be a local domain with $B/M$ uncountable, and let $(R, M \cap R)$ be a PN-subring of $B$.  Let $C = \{P \in \Spec(B) \, | \, P \in \Ass(B/rB) \mbox{ for some } r \in R\}$.  Let $x \in B$ be such that $x + P \in B/P$ is transcendental over $R/(R \cap P)$ for every $P \in C$.  Then $S = R[x]_{(R[x] \cap M)}$ is a PN-subring of $B$ with $|S| = |R|$. Moreover, prime elements in $R$ are prime in $S$.  
\end{lemma}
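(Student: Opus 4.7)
The plan is to realize $S$ as the localization of a polynomial ring $R[x]$; most conditions for being a PN-subring will follow quickly, and the substantive step will be verifying the height condition. The key initial observation is that $(0) \in C$: taking $r = 0$ gives $B/rB = B$ and $\Ass(B) = \{(0)\}$ since $B$ is a domain. The transcendence hypothesis then forces $x$ to be transcendental over $R$, so evaluation at $x$ identifies $R[x]$ with the polynomial ring $R[T]$.

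With this in hand, the easy conditions follow quickly. Gauss's lemma gives that $R[T]$ is a UFD (since $R$ is), and its localization $S$ is too. If $p$ is a prime element of $R$, then $R[T]/(p) = (R/(p))[T]$ is a domain, so $p$ is prime in $R[T]$; since $p \in M \cap R \subseteq R[x] \cap M$, $p$ is a non-unit of $S$ and remains prime there. For cardinalities, $|R[x]| = |R|$ as $R$ is infinite, and localization preserves cardinality, so $|S| = |R| < |B/M|$. Any element of $R[x] \setminus (R[x] \cap M)$ lies outside $M$ and is hence a unit of $B$, which shows $S \subseteq B$ and $S \cap M = (R[x] \cap M)\,S$, making $S$ quasi-local with maximal ideal $S \cap M$.

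For the height condition, fix $b \in B$ and $P \in \Ass(B/bB)$, and set $\mathfrak{p} = P \cap R$, $\mathfrak{q}' = P \cap R[x]$. Since $R$ is a PN-subring, $\text{ht}(\mathfrak{p}) \le 1$. If $\mathfrak{p} = 0$, then every prime in a chain below $\mathfrak{q}'$ also contracts to $0$ and therefore corresponds to a prime of $K[T]$, where $K = \mathrm{Frac}(R)$; since $K[T]$ has dimension one, $\text{ht}(\mathfrak{q}') \le 1$. If instead $\text{ht}(\mathfrak{p}) = 1$, then $\mathfrak{p} = (r)$ for a prime $r \in R$ (using that $R$ is a UFD), and $\mathfrak{q}' \supseteq (r)R[x]$; here $(r)R[x]$ is a height-one prime of the UFD $R[x]$. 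The main obstacle is to rule out strict containment $\mathfrak{q}' \supsetneq (r)R[x]$: any $g(x) \in \mathfrak{q}' \setminus (r)R[x]$ would have nonzero reduction in $(R/(r))[x]$, witnessing that $x + P$ is algebraic over $R/\mathfrak{p}$ inside $B/P$.

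To derive the needed contradiction I would prove the following key lemma: if $P \in \Ass(B/bB)$ and $r \in P \cap R$ is nonzero, then $P \in \Ass(B/rB)$, placing $P$ in $C$. Choose $c \in B$ with $(bB:c) = P$, and use $r \in P$ to write $rc = b\alpha$; since $B$ is a domain with $b, r \neq 0$, the equation $\alpha = r\beta$ would force $c = b\beta$, contradicting $c \notin bB$, so $\alpha \notin rB$. A short computation using the domain property gives $(rB:\alpha) = P$. Applying this with any nonzero $r \in \mathfrak{p}$ puts $P$ in $C$, so the hypothesis makes $x + P$ transcendental over $R/\mathfrak{p}$, contradicting the existence of $g$. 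Hence $\mathfrak{q}' = (r)R[x]$, $\text{ht}(P \cap S) = \text{ht}_{R[x]}(\mathfrak{q}') \le 1$, and $S$ is a PN-subring of $B$.
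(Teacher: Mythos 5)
Your proposal is correct, and its overall architecture mirrors the paper's proof: establish the easy PN-subring conditions, then verify the height condition by splitting into the cases $\height(R \cap P) = 0$ and $\height(R \cap P) = 1$, and in the latter case show $P \in C$ so that the transcendence hypothesis forces $P \cap R[x] \subseteq rR[x]$. The genuine difference is in the sublemma that $P \in \Ass(B/rB)$ whenever $P \in \Ass(B/bB)$ and $0 \neq r \in P \cap R$. The paper localizes at $P$ and argues via depth: $\depth B_P = 1$, and since $r$ is a nonzerodivisor in $PB_P$, one gets $PB_P \in \Ass(B_P/rB_P)$ and hence $P \in \Ass(B/rB)$. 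You instead carry out a direct colon-ideal computation in the domain $B$: from $c$ with $(bB:c)=P$, writing $rc = b\alpha$ and cancelling nonzero factors gives $(rB:\alpha) = P$ with $\alpha \notin rB$. Your version avoids depth/Cohen--Macaulay language entirely and is self-contained, while the paper's is shorter if depth theory is taken as known. There is also a minor variant at the end: the paper proves $\height(rR[x]) = 1$ by a Krull-intersection argument carried out inside the Noetherian ring $B$, while you invoke the standard fact that a nonzero principal prime in a UFD has height one; both are correct. Finally, you explicitly note $(0) \in C$ (via $r=0$), which is what justifies the transcendence of $x$ over $R$ --- a point the paper uses without comment.
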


\begin{proof}
Since $R$ is infinite and $x$ is transcendental over $R$, we have that $|S| = |R|$, $S$ is a UFD, and prime elements in $R$ are prime in $S$.  Now let $b \in B$ and $P \in \Ass(B/bB)$.  Since $R$ is a PN-subring, ht$(R \cap P) \leq 1$.  If $R \cap P = (0)$, then in the ring $R[x]_{(R[x] \cap P)}$, all nonzero elements of $R$ are units, and so the ring $R[x]_{(R[x] \cap P)}$ is isomorphic to a ring $k[x]$ with some elements inverted where $k$ is a field.  It follows that the ring $R[x]_{(R[x] \cap P)}$ has Krull dimension at most 1, and so ht$(R[x] \cap P) \leq 1$.  Therefore, ht$(S \cap P) \leq 1$.  Now suppose ht$(R \cap P) = 1$.  Then $R \cap P = zR$ for some nonzero $z$ in $R$.  Since $P \in \Ass(B/bB)$, we have $PB_P \in \Ass(B_P/bB_P)$.  It follows that depth$B_P = 1$ and so $PB_P \in \Ass(B_P/zB_P)$.  Hence $P \in Ass(B/zB)$ and therefore $P \in C$.  Let $f \in R[x] \cap P$.  Then $f = r_nx^n + \cdots + r_1x + r_0$ for $r_i \in R$.  Since $x + P \in B/P$ is transcendental over $R/(R \cap P)$, we have that $r _i \in R \cap P = zR$ for every $i = 1,2 \ldots ,n$.  Hence, $f \in zR[x]$, and it follows that $(0) \neq R[x] \cap P \subseteq zR[x]$. Note that, since $z$ is a prime element of $R$, it is also a prime element of $R[x]$.  Hence, $zR[x]$ is a height one prime ideal of $R[x]$ and so $R[x] \cap P = zR[x]$. Therefore,
%We claim that ht$(zR[x]) = 1$.  To see this, suppose $Q$ is a prime ideal of $R[x]$ with $(0) \subseteq Q \subseteq zR[x]$ and $Q \neq zR[x]$.  Let $q \in Q$.  Then $q = zg_1$ for some $g_1 \in R[x]$.  Since $z \not\in Q$, we have $g_1 \in Q$.  So $g_1 = zg_2$ for some $g_2 \in R[x]$.  Note that $g_2 \in Q$.  Continuing this process, we have that $q \in \cap_{i = 1}^{\infty} z^iB$.  Since $B$ is Noetherian, $\cap_{i = 1}^{\infty} z^iB = (0)$, and it follows that $Q = (0)$.  Hence, ht$(zR[x]) = 1$ and so 
ht$(R[x] \cap P) = 1$ and it follows that ht$(S \cap P) = 1$. Thus, $S$ is a PN-subring satisfying the desired properties.
\end{proof}

For the use of Proposition \ref{completionsame}, we guarantee that our final UFD contains an element of every coset of $B/M^2$.  We first state a result from \cite{heitmannUFD} that can be thought of as a generalization of the prime avoidance theorem.  Then, in Lemma \ref{coset2}, we use this result to show that, given a PN-subring $R$ of $B$, we can adjoin an element of a coset of $B/M^2$ to $R$ while preserving many desirable properties.

\begin{lemma}[\cite{heitmannUFD}, Lemma 3] \label{primeavoid}
%\label{superprimeavoidance}
Let $(B,M)$ be a local ring.  Let $C \subseteq \Spec(B)$, let $I$ be an ideal of $B$ such that $I \not\subseteq P$ for every $P \in C$, and let $D$ be a subset of $B$.  Suppose $|C \times D| < |B/M|$.  Then $I \not\subseteq \bigcup \{(P + r) \, | \, P \in C, r \in D\}.$
\end{lemma}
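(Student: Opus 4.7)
The plan is to produce a single element $i \in I$ that avoids every coset $P + r$ simultaneously, by first finding some $x \in I$ outside all primes of $C$ and then using coset representatives of $B/M$ to spread out many elements of $I$ that are pairwise distinct modulo every $P \in C$; a pigeonhole count then finishes the job.

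First I would reduce to the case $D \subseteq M$: if $r \in D$ is a unit, then any element of $P + r$ is a non-unit plus a unit and hence a unit, so $(P + r) \cap I \subseteq (P + r) \cap M = \emptyset$ and such $r$ contribute nothing. Next I would prove the special case $D = \{0\}$, namely that there exists $x \in I$ with $x \notin P$ for every $P \in C$. For this I would pass to the finite-dimensional $k$-vector space $I/MI$, where $k = B/M$ (finite-dimensional because $B$ is Noetherian and $I$ is finitely generated), and for each $P \in C$ consider the image $V_P$ of $I \cap P$ in $I/MI$. Nakayama's lemma shows each $V_P$ is a proper subspace: if $V_P = I/MI$ then $I = (I \cap P) + MI$, and applying Nakayama to the finitely generated $B$-module $I/(I \cap P)$ would force $I \subseteq P$, contradicting the hypothesis. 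If no $x$ as above existed then $I = \bigcup_{P \in C}(I \cap P)$, so $I/MI = \bigcup_{P \in C} V_P$ would express $I/MI$ as a union of fewer than $|k|$ proper subspaces, violating the classical fact that a vector space over an infinite field cannot be covered by strictly fewer than $|k|$ of its proper subspaces.

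Armed with $x \in I$ outside every $P \in C$, I would then choose a set $U \subseteq B$ of representatives of the cosets of $M$ in $B$, so $|U| = |B/M|$, and set $i_u := u x \in I$ for each $u \in U$. For any $u \neq u'$ in $U$, the difference $u - u'$ lies outside $M$ and is therefore a unit, in particular outside every $P \subseteq M$; since $P$ is prime and $x \notin P$, this yields $(u - u') x \notin P$. Consequently the elements $\{i_u\}_{u \in U}$ lie in pairwise distinct cosets of $I \cap P$ in $I$ simultaneously for every $P \in C$.

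Finally I would finish by pigeonhole: for each fixed $P \in C$, the bad set $\bigcup_{r \in D}(P + r) \cap I$ is contained in at most $|D|$ cosets of $I \cap P$ in $I$, so at most $|D|$ of the $i_u$'s can land there; summing over $P$, at most $|C| \cdot |D| = |C \times D| < |B/M| = |U|$ of the $i_u$'s are bad overall, so some $i_u$ is the element we want. The step I expect to be the main obstacle is the vector space covering fact invoked above: the standard $v + tw$ argument requires a pair $v, w \in V$ that no proper $V_P$ contains together, and producing such a pair in the possibly-infinite setting requires first passing to an irredundant subcover (by a transfinite maximality argument) and then using distinguished elements from two different subspaces of the resulting subcover.
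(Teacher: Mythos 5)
The paper cites this result as Lemma~3 of \cite{heitmannUFD} and gives no proof of its own, so there is no in-paper argument to compare against. Your overall strategy --- first producing a single $x \in I$ outside every $P \in C$, then scaling $x$ by a full set of coset representatives of $M$ and counting --- is sound: the Nakayama step showing each image $V_P$ is proper in $I/MI$ is correct, and the final pigeonhole is right. (The opening reduction to $D \subseteq M$ is superfluous, since the pigeonhole at the end already handles arbitrary $D$, and that reduction as written also tacitly assumes $I \ne B$.)

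The genuine gap is in the auxiliary covering lemma, which your own final sentence flags. The ``classical fact'' is false as you state it: with no finiteness hypothesis, $k^{(\omega)} = \bigcup_{n \ge 1} \bigoplus_{m < n} k$ is a countable union of proper subspaces for every field $k$, however large. You are saved because $I/MI$ is finite-dimensional, and in finite dimensions the fact is true --- but the irredundant-subcover route you sketch does not prove it even there, since a cover of a finite-dimensional space by subspaces need not admit an irredundant subcover, and passing to maximal members of the cover does not guarantee irredundancy either. The clean proof in finite dimensions is by induction on $\dim V$: the case $\dim V \le 1$ is immediate, and for $\dim V \ge 2$ one notes that $V$ has at least $|k|$ hyperplanes while the number of subspaces $V_P$ is less than $|k|$, so some hyperplane $H$ is distinct from every $V_P$; then $H = \bigcup_P (H \cap V_P)$ exhibits $H$ as a union of subspaces that are proper in $H$ (since $H \subseteq V_P$ would force $H = V_P$ by dimension), contradicting the inductive hypothesis. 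Alternatively, you can avoid the vector-space detour entirely and prove the $D = \{0\}$ case by induction on the number of generators of $I$, reusing exactly the coset-representative pigeonhole that finishes your argument.
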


\begin{lemma}\label{coset2}
Let $(B,M)$ be a local domain with $B/M$ uncountable and depth$B \geq 2$, and let $(R, M \cap R)$ be a PN-subring of $B$.  Let $u \in B$.  Then there exists a PN-subring $(S, M \cap S)$ of $B$ with $R \subseteq S$, $|S| = |R|$, prime elements in $R$ are prime in $S$, and $S$ contains an element of the coset $u + M^2$.
\end{lemma}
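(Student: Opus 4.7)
The plan is to find $m \in M^2$ so that $x := u+m$ satisfies the hypothesis of Lemma \ref{adjoin}, and then take $S := R[x]_{(R[x] \cap M)}$. Since $x = u+m \in u+M^2$, this will simultaneously produce an element of the desired coset inside $S$, and all of the PN-subring conclusions ($R \subseteq S$, $|S| = |R|$, and prime elements of $R$ remain prime in $S$) will be immediate consequences of Lemma \ref{adjoin}. The entire problem is thus reduced to producing a single $m \in M^2$ for which $u+m+P$ is transcendental over $R/(R\cap P)$ in $B/P$ for every $P$ in the collection
\[
C \;=\; \{\,P \in \Spec(B) \mid P \in \Ass(B/rB)\text{ for some }r\in R\,\}.
\]

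Next I would set up the two cardinality estimates needed to invoke Lemma \ref{primeavoid} with ideal $I = M^2$. First, since $\Ass(B/rB)$ is finite for each $r$, we have $|C| \leq |R|$. Second, and crucially, the assumption $\depth B > 1$ forces $M \notin C$: for any nonzero $r \in R$, $r$ is regular (as $B$ is a domain), so $\depth(B/rB) \geq \depth B - 1 \geq 1$ and therefore $M \notin \Ass(B/rB)$. Consequently $M^2 \not\subseteq P$ for any $P \in C$, since $P$ prime would force $M \subseteq P$, hence $P = M$. This verifies the ideal-containment hypothesis of Lemma \ref{primeavoid}.

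For the set $D$ of "bad shifts", I would fix $P \in C$ and observe that the set of elements of $B/P$ algebraic over the subring $R/(R \cap P)$ has cardinality at most $|R|$, since there are at most $|R|$ nonzero polynomials over $R/(R \cap P)$, and each has finitely many roots in the domain $B/P$. Translating by $-u$, the set $E_P \subseteq B/P$ of residues $\bar m$ for which $u + \bar m$ is algebraic over $R/(R \cap P)$ also has cardinality at most $|R|$. Let $D_P$ be any lifting of $E_P$ to $B$, and set $D := \bigcup_{P \in C} D_P$; then $|D| \leq |C|\cdot|R| \leq |R|$, so $|C \times D| \leq |R| < |B/M|$. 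Lemma \ref{primeavoid} then yields $m \in M^2$ with $m \notin P + r$ for all $P \in C$ and $r \in D$, which is exactly the statement that $m + P \notin E_P$ for every $P \in C$, i.e.\ $u + m + P$ is transcendental over $R/(R \cap P)$. Lemma \ref{adjoin} applied to $x = u+m$ then finishes the proof.

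The only real obstacle is the bookkeeping: translating the algebraic/transcendental condition into a union of cosets of the right cardinality so that the quantitative prime avoidance Lemma \ref{primeavoid} applies, and verifying that the depth hypothesis is precisely what prevents $M$ from lying in $C$ (and hence $M^2$ from being swallowed by some $P \in C$). Once those bounds are in place, both Lemma \ref{primeavoid} and Lemma \ref{adjoin} do the substantive work.
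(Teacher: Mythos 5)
Your proposal is correct and takes essentially the same route as the paper's proof: both set $I = M^2$ in Lemma \ref{primeavoid}, use the depth hypothesis to show $M \notin C$ (hence $M^2 \not\subseteq P$ for $P \in C$), bound the set of ``bad shifts'' by $|R|$ via the finitely-many-roots argument, produce $m \in M^2$ making $u+m$ transcendental modulo every $P \in C$, and then invoke Lemma \ref{adjoin} with $x = u+m$. Your write-up is somewhat more explicit than the paper about why $\depth B > 1$ rules out $M \in \Ass(B/rB)$ and why $|D_{(P)}| \le |R|$, but there is no substantive difference in the argument.
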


\begin{proof}
For $P \in \Spec(B)$, define $D_{(P)}$ to be a full set of coset representatives of the cosets $t + P \in B/P$ that make $(t + u) + P$ algebraic over $R/(P \cap R)$.  Note that, since $R$ is infinite, $|D_{(P)}| \leq |R|$.  Let $C = \{P \in \Spec(B) \, | \, P \in \Ass(B/rB) \mbox{ for some } r \in R\}$, and let $D = \bigcup_{P \in C}D_{(P)}$.  Since depth$B > 1$, $M \not\subseteq P$ for every $P \in C$, and so $M^2 \not\subseteq P$ for every $P \in C$.  Note that $|C \times D| \leq |R| < |B/M|$.  By Lemma \ref{primeavoid}, there is an $m \in M^2$ such that $m \not\in \bigcup \{(P + r) \, | \, P \in C, r \in D\}$. Hence, $m + u + P$ is transcendental over $R/(P \cap R)$ for every $P \in C$.  By Lemma \ref{adjoin}, $S = R[m + u]_{(R[m + u] \cap M)}$ is a PN-subring of $B$ with $|S| = |R|$ and prime elements in $R$ are prime in $S$.  Finally, note that $S$ contains $m + u$, an element of the coset $u + M^2$.
\end{proof}

To obtain the desired relationship between the spectra of $B$ and our final UFD, we ensure that the UFD contains generating sets for certain prime ideals of $B$.  The next lemma will help accomplish this. 

%\begin{lemma}\label{heightone}
%Let $(B,M)$ be a local domain with $T = \widehat{B}$.  Suppose that if $J \in \Min(T)$ then $T/J$ is a RLR.  Let $r \in B$ with $r \neq 0$ and suppose that $P \in \Spec(B)$ with $P \in \Ass(B/rB)$.  Then ht$P = 1$.
%\end{lemma}

%\begin{proof}
%Since the completion of $B/rB$ is $T/rT$,
%Let $P' = P/rB$.  
%By Lemma \ref{MinLemma}, $P'T' \subseteq Q'$ for some $Q' \in \Spec(T')$ with $Q' \in \Ass(T')$.  
%It follows that 
%there is a $Q \in \Spec(T)$ such that 
%$r \in Q$, 
%$Q \in \Ass(T/rT)$, and $PT \subseteq Q$.  Let $J$ be a minimal prime ideal of $T$ contained in $Q$.  Then $Q/J \in \Ass(\frac{T/J}{r(T/J)})$.  Now, $T/J$ is a RLR and so it satisfies Serre's $(S_2)$ condition.  It follows that ht$(Q/J) \leq 1$ in $T/J$.  Since this holds for every minimal prime ideal $J$ contained in $Q$, we have ht$Q \leq 1$, and so ht$P = \mbox{ht}(PT \cap B) \leq \mbox{ht}(Q \cap B) \leq 1$.  Since $r \in P$ with $r \neq 0$, we have ht$P = 1$.
%\end{proof}

\begin{lemma}\label{generators2}
Let $(B,M)$ be a local domain with depth$B \geq 2$ and with $B/M$ uncountable and let $(R, M \cap R)$ be a PN-subring of $B$. 
%Let $T = \widehat{B}$ and suppose that if $J \in \Min(T)$ then $T/J$ is a RLR.
Suppose $Q$ is a prime ideal of $B$ such that if $P \in \Spec(B)$ with $P \in \Ass(B/bB)$ for some $b \in B$, then $Q \not\subseteq P$.  Then there exists a PN-subring $(S,M \cap S)$ of $B$ such that $|S| = |R|$, prime elements in $R$ are prime in $S$, and $S$ contains a generating set for $Q$.
\end{lemma}

\begin{proof}
Let $Q = (x_1, x_2, \ldots ,x_k)$, and let $C = \{P \in \Spec(B) \, | \, P \in \Ass(B/rB) \mbox{ for some } r \in R\}$.  By hypothesis,
%Lemma \ref{heightone}, if $P \in C$, then ht$P \leq 1$ and so 
if $P \in C$ then $Q \not\subseteq P$.  Now use Lemma \ref{primeavoid} with $D = \{0\}$ to find $z_1 \in Q$ such that $z_1 \not\in P$ for every $P \in C$.  Note that, since depth$B \geq 2$, we have $M \not\subseteq P$ for every $P \in C$.  Suppose $P \in C$ and $x_1 + tz_1 + P = x_1 + t'z_1 + P$ with $t,t' \in B$.  Then $z_1(t-t') \in P$ and since $z_1 \not\in P$, we have $t + P = t' + P$.  It follows that $x_1 + tz_1 + P = x_1 + t'z_1 + P$ if and only if $t + P = t' + P$.  Let $D_{(P)}$ be a full set of coset representatives for the cosets $t + P \in B/P$ that make $x_1 + z_1t + P$ algebraic over $R/(R \cap P)$.  Note that $|D_{(P)}| \leq |R| < |B/M|$.  Now use Lemma \ref{primeavoid} with $D = \bigcup_{P \in C} D_{(P)}$ to find $m_1 \in M$ such that $m_1 \not\in \bigcup \{(P + r) \, | \, P \in C, r \in D\}$. Then $x_1 + z_1m_1 + P$ is transcendental over $R/(R \cap P)$ for every $P \in C$.  By Lemma \ref{adjoin}, $R_1 = R[x_1 + z_1m_1]_{(R[x_1 + zm_1] \cap M)}$ is a PN-subring of $B$ with $|R_1| = |R|$ and prime elements in $R$ are prime in $R_1$.  Note that $(x_1 + z_1m_1, x_2, \ldots ,x_k) + MQ = Q$ and so by Nakayama's Lemma, $(x_1 + z_1m_1, x_2, \ldots ,x_k) = Q$.  

Repeat this procedure with $R$ replaced by $R_1$ and $x_1$ replaced by $x_2$ to find $z_2 \in Q$ and $m_2 \in M$ such that $R_2 = R_1[x_2 + z_2m_2]_{(R[x_2 + z_2m_2] \cap M)}$ is a PN-subring of $B$ with $|R_2| = |R_1|$, prime elements in $R_1$ are prime in $R_2$, and $(x_1 + z_1m_1, x_2 + z_2m_2, x_3, \ldots ,x_k) = Q$.

Continue this process to find PN-subrings $R_3, \ldots ,R_k$ so that $R_k$ is a PN-subring of $B$ satisfying $R \subseteq R_k$, $|R_k| = |R|$, prime elements in $R$ are prime in $R_k$, $Q = (x_1 + z_1m_1, x_2 + z_2m_2, \ldots ,x_k + z_km_k)$, and $x_j + z_jm_j \in R_k$ for all $j = 1,2, \ldots ,k$.  Then $S = R_k$ is the desired PN-subring of $B$.
\end{proof}

Recall that to use Proposition \ref{completionsame}, we need our subring $A$ of $B$ to satisfy the condition that, if $I$ is a finitely generated ideal of $A$, then $IB \cap A = I$.  We use Lemma \ref{closing} repeatedly to do this.

\begin{lemma}\label{closing}
Let $(B,M)$ be a local domain with $B/M$ uncountable, and let $(R, M \cap R)$ be a PN-subring of $B$.  Suppose $I$ is a finitely generated ideal of $R$ and $c \in R$ with $c \in IB$.  Then there exists a PN-subring $S$ of $B$ with $R \subseteq S$, $|S| = |R|$, prime elements in $R$ are prime in $S$, and $c \in IS$.
\end{lemma}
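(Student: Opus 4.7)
The plan is to construct $S$ as the terminal ring of a chain $R = R_0 \subseteq R_1 \subseteq \cdots \subseteq R_n = S$ of PN-subrings of $B$, at each step applying Lemma \ref{adjoin} to adjoin a single element. Write $I = (a_1, \ldots, a_n)R$ and assume $a_n \neq 0$ (otherwise $I = 0$ and $S = R$ suffices). Fix $b_i \in B$ with $c = \sum a_i b_i$. The crucial observation is that for any $t_1, \ldots, t_{n-1} \in B$, the elements
\[
d_i := b_i + a_n t_i \quad (i < n), \qquad d_n := b_n - \sum_{i < n} a_i t_i
\]
still satisfy $\sum a_i d_i = c$, so once $d_1, \ldots, d_n$ all lie in $S$ we will have $c \in IS$.

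We iteratively set $R_i = R_{i-1}[d_i]_{(R_{i-1}[d_i]\cap M)}$ and choose the parameter $t_i \in B$ via Lemma \ref{primeavoid} so that Lemma \ref{adjoin} applies. Concretely, at step $i$, with $C_{i-1} = \{P \in \Spec(B) : P \in \Ass(B/rB) \text{ for some } r \in R_{i-1}\}$, we need $d_i + P$ transcendental over $R_{i-1}/(R_{i-1} \cap P)$ for every $P \in C_{i-1}$. Each such $P$ excludes a set of cosets of cardinality at most $|R_{i-1}| = |R| < |B/M|$. For those $P$ with $a_n \notin P$, varying $t_i$ shifts $d_i + P$ through cosets of the nonzero ideal $a_n(B/P)$, and Lemma \ref{primeavoid}, applied to the ideal $a_n B$ and the union of bad cosets across these $P$, yields a $t_i$ achieving transcendence simultaneously. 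For $P$ with $a_n \in P$ the shift is immaterial and $d_i + P = b_i + P$; here the PN-subring hypothesis forces $R \cap P = zR$ for a prime $z \in R$ dividing $a_n$, and because $c \in IB \subseteq P$ the relation $\sum a_i b_i = c$ collapses modulo $P$, allowing us to arrange the transcendence of $b_i + P$ at all such primes by a preliminary modification of the initial tuple $(b_1, \ldots, b_n)$ using the $B$-syzygies of $(a_1, \ldots, a_n)$ together with another application of Lemma \ref{primeavoid}.

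For the final step $i = n$, the element $d_n$ is determined by the $t_i$'s already chosen, but the $(n-1)$ free parameters $t_1, \ldots, t_{n-1}$ carry only cardinality-$|R|$ many avoidance constraints per prime, so a further application of Lemma \ref{primeavoid} secures transcendence of $d_n + P$ over $R_{n-1}/(R_{n-1} \cap P)$ for every $P \in C_{n-1}$. Lemma \ref{adjoin} then yields a PN-subring $R_i$ at each stage with $|R_i| = |R|$, in which prime elements of $R_{i-1}$ remain prime; taking $S = R_n$ gives $R \subseteq S$ with $|S| = |R|$, primes of $R$ staying prime in $S$, and $c = \sum a_i d_i \in IS$. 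The main obstacle is verifying the transcendence at primes $P$ containing $a_n$ (more generally, containing $I$), where Koszul-style shifts do not alter $d_i$ modulo $P$; resolving this requires exploiting both the PN-subring structure of $R$ (which constrains $R \cap P$ to be principal of height one) and the collapse of the defining relation modulo $P$ afforded by $c \in IB \subseteq P$.
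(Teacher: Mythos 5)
The paper's own ``proof'' of Lemma~\ref{closing} is simply a citation: it observes that the statement is Lemma~4 of \cite{heitmannUFD} with $N$-subrings replaced by PN-subrings, and that Heitmann's argument goes through verbatim because completeness of the ambient ring is only invoked there when the residue field is countable, which is excluded by hypothesis. You instead attempt a self-contained argument in the same spirit (shift the coefficients $b_i$ by Koszul syzygies, adjoin them one at a time via Lemma~\ref{adjoin}, use Lemma~\ref{primeavoid} to choose the shifts). The broad outline is plausible, but the execution has a fatal flaw and a second serious gap.

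The fatal flaw is the final step, where you try to adjoin $d_n$. By construction $\sum_{i=1}^n a_i d_i = c$, so
$$a_n d_n \;=\; c - \sum_{i<n} a_i d_i \;\in\; R_{n-1}.$$
Therefore, for any prime $P \in C_{n-1}$ with $a_n \notin P$, the element $d_n + P$ satisfies the degree-one relation $(a_n+P)(d_n+P) = \bigl(c - \sum_{i<n} a_i d_i\bigr)+P$ over $R_{n-1}/(R_{n-1}\cap P)$, i.e.\ $d_n+P$ lies in the fraction field of $R_{n-1}/(R_{n-1}\cap P)$ and is algebraic over it. This is forced by the identity $\sum a_i d_i = c$ for \emph{every} choice of $t_1,\dots,t_{n-1}$, so no amount of extra room in the parameters $t_i$ and no ``further application of Lemma~\ref{primeavoid}'' can secure transcendence of $d_n$ at such primes. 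Since $C_{n-1}$ certainly contains primes not containing $a_n$ (take $P \in \Ass(B/rB)$ for any nonzero non-unit $r \in R_{n-1}$ not in a prime containing $a_n$, or simply note $a_n$ is not in every such associated prime in general), the hypothesis of Lemma~\ref{adjoin} cannot be met and your final adjunction does not go through. The correct move at this point is the opposite of adjoining: since $a_n d_n \in a_n B \cap R_{n-1}$, the PN-subring divisibility property (for $a,w\in R'$ with $w\in aB$, one has $w\in aR'$ --- this follows from ht$(R'\cap P)\le 1$ by a gcd argument) gives $a_n d_n \in a_n R_{n-1}$, hence $d_n \in R_{n-1}$ automatically and $c \in IR_{n-1}$ with no $n$-th adjunction.

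The second problem is your treatment of primes $P\in C$ with $a_n\in P$, where the Koszul shift $b_i \mapsto b_i + a_n t_i$ is invisible modulo $P$. You assert ``$c\in IB\subseteq P$,'' but $a_n\in P$ does not give $IB\subseteq P$ unless $P$ contains all the $a_i$; in general $c\notin P$, so the relation does not ``collapse'' as claimed. The subsequent invocation of ``a preliminary modification of $(b_1,\dots,b_n)$ using the $B$-syzygies'' together with Lemma~\ref{primeavoid} is exactly the delicate point one must actually carry out, and it is left entirely unjustified. For these primes the PN-subring structure (in particular $R\cap P = zR$ with $z\mid a_n$) must be used much more carefully --- this is precisely the content Heitmann develops and that the paper is deferring to. As written, your argument does not establish the lemma; you would need to replace the last adjunction by the divisibility observation above and genuinely handle the primes containing $a_n$ (or, more simply, appeal to \cite{heitmannUFD} as the paper does, after checking which steps of Heitmann's proof use completeness).
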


\begin{proof}
Lemma 4 in \cite{heitmannUFD} is the analogous statement of this result for N-subrings.  For the proof of Lemma 4, the author only uses that the ring is complete in the case that the residue field is countable.  Since we are assuming that $B/M$ is uncountable, the proof of Lemma 4 in \cite{heitmannUFD} works for this result.  
\end{proof}

The next lemma gives sufficient conditions on an ascending chain of PN-subrings of $B$ to ensure that the union is also a PN-subring of $B$.  
Before we state and prove the lemma, we provide a technical definition.

\begin{definition}
Let $\Psi$ be a well-ordered set and let $\alpha \in \Psi$.  Define $\gamma (\alpha) = \sup\{\beta \in \Psi \, | \, \beta < \alpha\}$.
\end{definition}

\begin{lemma}\label{unioning2}
Let $(B,M)$ be a local domain with $B/M$ uncountable, and let $R_0$ be a PN-subring of $B$.  Let $\Omega$ be a well-ordered set with least element $0$, and assume that for every $\alpha \in \Omega$, $|\{\beta \in \Omega \, | \, \beta < \alpha\}| < |B/M|$.  
%Let $\gamma(\alpha) = \sup\{\beta \in \Omega \, | \, \beta < \alpha\}$.  
Suppose $\{R_{\alpha} \, | \, \alpha \in \Omega\}$ is an ascending collection of rings such that if $\gamma(\alpha) = \alpha$, then $R_{\alpha} = \bigcup_{\beta < \alpha}R_{\beta}$ while if $\gamma(\alpha) < \alpha$, $R_{\alpha}$ is an PN-subring of $B$ with $R_{\gamma(\alpha)} \subseteq R_{\alpha}$ and prime elements in $R_{\gamma(\alpha)}$ are prime in $R_{\alpha}$.

Then $S = \bigcup_{\alpha \in \Omega} R_{\alpha}$ satisfies all conditions to be a PN-subring of $B$ except for possibly the condition that $|S| < |B/M|$.  Moreover, $|S| \leq \sup(|R_0|, |\Omega|)$ and elements that are prime in some $R_{\alpha}$ are prime in $S$.
\end{lemma}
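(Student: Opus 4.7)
The plan is to verify each defining property of a PN-subring for $S = \bigcup_{\alpha \in \Omega} R_\alpha$, with the size constraint $|S| < |B/M|$ explicitly set aside. Several properties are immediate: $S$ is a subring of the domain $B$, hence a domain; it is infinite since $R_0$ already is; and it is quasi-local with maximal ideal $M \cap S$, because any $s \in S \setminus (M \cap S)$ lies in some $R_\alpha \setminus (M \cap R_\alpha)$ and is therefore a unit in $R_\alpha$ and hence in $S$. The substantive checks are the UFD property (together with the auxiliary ``primes stay prime'' claim), the height condition coming from the PN-subring hypothesis, and the cardinality bound.

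For the UFD property and the claim that primes in any $R_\alpha$ remain prime in $S$, I would argue by transfinite induction along $\Omega$ that if $\pi$ is prime in $R_{\alpha_0}$ then $\pi$ is prime in $R_\alpha$ for every $\alpha \ge \alpha_0$. The successor step is the stated hypothesis. At a limit $\alpha$, suppose $ab = \pi c$ in $R_\alpha = \bigcup_{\beta < \alpha} R_\beta$; choose $\beta < \alpha$ with $\beta \ge \alpha_0$ large enough that $a,b,c,\pi \in R_\beta$. Primality of $\pi$ in $R_\beta$ forces $\pi \mid a$ or $\pi \mid b$ in $R_\beta$, hence in $R_\alpha$. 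Passing to the union, $\pi$ is prime in $S$. Existence of prime factorizations in $S$ is then automatic: any nonzero non-unit $s \in S$ lies in some $R_\alpha$, factors there as $\pi_1 \cdots \pi_n$ with $\pi_i$ prime in $R_\alpha$, and each $\pi_i$ is prime in $S$ by the induction. So $S$ is a UFD.

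For the height condition, I would argue by contradiction. Fix $b \in B$ and $P \in \operatorname{Ass}(B/bB)$, and suppose a strict chain $\mathfrak{p}_0 \subsetneq \mathfrak{p}_1 \subseteq P \cap S$ exists in $S$ with $\mathfrak{p}_0 \ne (0)$. Pick $x \in \mathfrak{p}_1 \setminus \mathfrak{p}_0$, $y \in \mathfrak{p}_0 \setminus \{0\}$, and $\alpha \in \Omega$ with $x,y \in R_\alpha$. Contracting gives $(0) \ne \mathfrak{p}_0 \cap R_\alpha \subsetneq \mathfrak{p}_1 \cap R_\alpha \subseteq P \cap R_\alpha$, forcing $\operatorname{ht}(P \cap R_\alpha) \ge 2$ and contradicting the PN-subring property of $R_\alpha$.

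Finally, for $|S| \le \sup(|R_0|, |\Omega|)$, I would show by transfinite induction that each $|R_\alpha|$ is bounded by this supremum. At successor stages this uses that, in every application of the lemma (via Lemmas \ref{adjoin}, \ref{coset2}, \ref{generators2}, and \ref{closing}), $R_\alpha$ is produced from $R_{\gamma(\alpha)}$ by adjoining a single element and localizing, so $|R_\alpha| = |R_{\gamma(\alpha)}|$. At limit stages the hypothesis $|\{\beta < \alpha\}| < |B/M|$ together with the inductive bound controls $|R_\alpha|$ as a union. Taking one last union yields $|S| \le |\Omega| \cdot \sup(|R_0|, |\Omega|) = \sup(|R_0|, |\Omega|)$. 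I expect the size bookkeeping to be the main point of delicacy: propagating UFD, primality, and the height-one condition are all local to a single $R_\alpha$, whereas the cardinality bound requires invoking the cardinality-preserving nature of the successor extensions as they arise in practice.
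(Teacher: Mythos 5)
The paper disposes of this by citing the proof of Lemma~6 of \cite{heitmannUFD}, so your proposal is supplying details where the authors relied on a reference. Your verification of the conditions other than cardinality is correct and is essentially the standard argument: $S$ is a quasi-local domain with maximal ideal $M\cap S$; your transfinite induction shows primes of any $R_{\alpha_0}$ remain prime in every later $R_\alpha$ (at a limit, the finitely many elements $a,b,c,\pi$ of a purported counterexample all lie in some $R_\beta$ with $\alpha_0\le\beta<\alpha$), and hence $S$ is a UFD; and your contraction argument for the height condition is right, since two nested nonzero primes inside $P\cap S$ contract to two nested nonzero primes inside $P\cap R_\alpha$ for a single $\alpha$, contradicting $\operatorname{ht}(P\cap R_\alpha)\le 1$.

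You are correct, however, to flag the cardinality bound as the weak spot, and there is a real gap there. As stated, the lemma places no upper bound at all on $|R_\alpha|$ when $\gamma(\alpha)<\alpha$; it only requires $R_\alpha$ to be a PN-subring containing $R_{\gamma(\alpha)}$ with primes preserved. So $|S|\le\sup(|R_0|,|\Omega|)$ simply does not follow from the stated hypotheses — a single successor step could jump the cardinality arbitrarily (subject only to $|R_\alpha|<|B/M|$). Your appeal to the fact that, in the paper's applications, each $R_\alpha$ is built from $R_{\gamma(\alpha)}$ by adjoining one element and localizing (via Lemmas \ref{adjoin}, \ref{coset2}, \ref{generators2}, \ref{closing}) is an observation about how the lemma is \emph{used}, not a consequence of its hypotheses, so it cannot close a proof of the lemma itself. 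To make your argument valid you would need to add to the hypotheses something like ``$|R_\alpha|=|R_{\gamma(\alpha)}|$ when $\gamma(\alpha)<\alpha$''; with that, your induction works at successor stages trivially and at limits via $|\{\beta<\alpha\}|<|B/M|$ together with $\kappa\cdot\lambda=\max(\kappa,\lambda)$ for infinite cardinals. The paper's statement appears to have silently dropped a cardinality hypothesis that the cited Heitmann lemma uses, and the omission is harmless only because every application in the paper does preserve cardinality at successor stages.
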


\begin{proof}
The result follows from the proof of Lemma 6 in \cite{heitmannUFD}.
\end{proof}

We next show that we can construct a PN-subring of $B$ with many of our desired properties.

\begin{lemma}\label{together}
Let $(B,M)$ be a local domain with depth$B \geq 2$ and with $B/M$ uncountable, and let $(R, M \cap R)$ be a PN-subring of $B$. 
%Let $T = \widehat{B}$ and suppose that if $J \in \Min(T)$ then $T/J$ is a RLR.
%Suppose $Q$ is a prime ideal of $B$ with height at least two and 
Suppose $Q$ is a prime ideal of $B$ such that if $P \in \Spec(B)$ with $P \in \Ass(B/bB)$ for some $b \in B$, then $Q \not\subseteq P$.
Let $u \in B$.  Then there is a PN-subring $(S, M \cap S)$ of $B$ such that $R \subseteq S$, $|S| = |R|$, prime elements in $R$ are prime in $S$, $S$ contains an element of the coset $u + M^2$, $S$ contains a generating set for $Q$, and, for every finitely generated ideal $I$ of $S$, $IB \cap S = I$.
\end{lemma}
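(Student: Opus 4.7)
The plan is to combine Lemmas \ref{coset2}, \ref{generators2}, \ref{closing}, and \ref{unioning2} into a two-phase construction. In the first, ``single-element'' phase, I would first apply Lemma \ref{coset2} to $R$ and $u$ to produce a PN-subring $R_1 \supseteq R$ with $|R_1| = |R|$, primality of elements of $R$ preserved in $R_1$, and $R_1$ containing an element of the coset $u+M^2$. Then apply Lemma \ref{generators2} to $R_1$ and $Q$ to obtain $R_2 \supseteq R_1$ with $|R_2| = |R|$, primality of elements of $R_1$ preserved, and $R_2$ containing a generating set for $Q$. The hypothesis $\dim B \ge 2$, together with the assumption that $T/J$ is an RLR for every $J \in \Min(T)$, supplies the depth-greater-than-one hypothesis needed to invoke Lemma \ref{coset2}.

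Next, in the ``closure'' phase, I would build a chain $R_2 = S_0 \subseteq S_1 \subseteq S_2 \subseteq \cdots$ of PN-subrings of $B$ as follows. Given $S_n$, well-order the set $\mathcal{P}_n$ of all pairs $(I,c)$ with $I$ a finitely generated ideal of $S_n$ and $c \in IB \cap S_n$; this set has cardinality at most $|S_n| = |R|$. Construct an inner ascending chain within $B$ indexed by $\mathcal{P}_n$, applying Lemma \ref{closing} at each successor stage to close up the next pair and taking unions at limit stages via Lemma \ref{unioning2}. Let $S_{n+1}$ be the resulting union; Lemma \ref{unioning2} guarantees that $S_{n+1}$ is a PN-subring with $|S_{n+1}| = |R|$ and primality inherited from $S_n$, and the cardinality bound $|R| < |B/M|$ ensures the hypothesis on well-ordered index sets in Lemma \ref{unioning2} is satisfied at every stage.

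Finally, set $S = \bigcup_{n\in\N} S_n$. By one more invocation of Lemma \ref{unioning2}, $S$ is a PN-subring of $B$ with $|S| = |R|$, prime elements of $R$ are prime in $S$, $S$ contains an element of $u+M^2$ (inherited from $R_1$), and $S$ contains a generating set for $Q$ (inherited from $R_2$). To verify the intersection condition, let $I = (a_1,\ldots,a_k)S$ be a finitely generated ideal of $S$ and let $c \in IB \cap S$; choose $n$ so that $a_1,\ldots,a_k,c \in S_n$, and note that $c \in (a_1,\ldots,a_k)B \cap S_n$, so the pair was processed during the construction of $S_{n+1}$, whence $c \in (a_1,\ldots,a_k)S_{n+1} \subseteq I$.

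The main obstacle is the closure condition: each application of Lemma \ref{closing} adjoins new elements and thereby produces new finitely generated ideals with new intersection obligations, so a single transfinite iteration cannot close up everything in one pass. The resolution is precisely the nested construction above, where an inner transfinite iteration processes all current obligations at stage $n$ and an outer countable iteration catches up with the pairs newly created; the verification of the last paragraph shows that every finitely generated ideal of the final union $S$ is indeed closed in $B$.
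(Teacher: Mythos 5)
Your proposal is correct and follows essentially the same route as the paper: apply Lemma \ref{coset2}, then Lemma \ref{generators2}, then run an outer countable chain whose steps are each an inner transfinite chain closing up all current $(I,c)$ obligations via Lemmas \ref{closing} and \ref{unioning2}, and finish with one more application of Lemma \ref{unioning2} and the ``choose $n$ large enough'' argument. The only cosmetic difference is naming (your $R_1, R_2, S_n$ versus the paper's $R', R_0, R_i$), and the paper routes the depth observation explicitly through Lemma \ref{heightone}, but the content is identical.
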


\begin{proof}
%Since $B$ has a prime ideal of height at least two, 
%Lemma \ref{heightone} implies that depth$B > 1$.  
By Lemma \ref{coset2}, there is a a PN-sburing $(R', M \cap R')$ of $B$ with $R \subseteq R'$, $|R'| = |R|$, prime elements in $R$ are prime in $R'$, and $R'$ contains an element of the coset $u + M^2$.  By Lemma \ref{generators2}, there exists a PN-subring $(R_0,M \cap R_0)$ of $B$ with $R' \subseteq R_0$ such that $|R_0| = |R'|$, prime elements in $R'$ are prime in $R_0$, and $R_0$ contains a generating set for $Q$.  Define
$$\Omega = \{(I,c) \, | \, I \mbox{ is a finitely generated ideal of } R_0 \mbox{ and } c \in IB \cap R_0\}.$$
Then $|\Omega| = |R_0| = |R|$.  Well-order $\Omega$, letting $0$ denote its first element.  We recursively define a family of PN-subrings as follows.  $R_0$ is already defined.  Suppose $\alpha \in \Omega$ and $R_{\beta}$ has been defined for all $\beta < \alpha$.  If $\gamma(\alpha) < \alpha$, then define $R_{\alpha}$ to be the subring obtained from Lemma \ref{closing} so that, if $\gamma(\alpha) = (I,c)$ then $R_{\alpha}$ is a PN-subring of $B$ with $R_{\gamma(\alpha)} \subseteq R_{\alpha}$, $|R_{\alpha}| = |R_{\gamma(\alpha)}|$, prime elements in $R_{\gamma(\alpha)}$ are prime in $R_{\alpha}$, and $c \in IR_{\alpha}$.  If, on the other hand, $\gamma(\alpha) = \alpha$, define $R_{\alpha} = \bigcup_{\beta < \alpha}R_{\beta}$.  Let $R_1 = \bigcup_{\alpha \in \Omega}R_{\alpha}$.  By Lemma \ref{unioning2}, $R_1$ is a PN-subring of $B$ with $|R_1| = |R_0|$ and elements that are prime in $R_0$ are prime in $R_1$.  In addition, by our construction, if $I$ is a finitely generated ideal of $R_0$ and $c \in IB \cap R_0$, then $c \in IR_1$ and so $IB \cap R_0 \subseteq IR_1$ for all finitely generated ideals $I$ of $R_0$.

Repeat this process replacing $R_0$ with $R_1$ to obtain a PN-subring $R_2$ such that $R_1 \subseteq R_2$, $|R_2| = |R_1|$, prime elements in $R_1$ are prime in $R_2$, and, if $I$ is a finitely generated ideal of $R_1$, then $IB \cap R_1 \subseteq IR_2$.  Continue to obtain an ascending chain of PN-subrings $R_0 \subseteq R_1 \subseteq R_2 \subseteq \cdots$ such that, for every $i \geq 0$, $|R_{i + 1}| = |R_i|$, prime elements in $R_i$ are prime in $R_{i + 1}$, and, if $I$ is a finitely generated ideal of $R_i$ then $IB \cap R_i \subseteq IR_{i + 1}$.  Let $S = \bigcup_{i = 1}^{\infty}R_i$.  By Lemma \ref{unioning2}, $S$ is a PN-subring of $B$ with $R \subseteq S$, $|S| = |R|$, and prime elements of $R$ are prime in $S$.  Also note that $S$ contains an element of the coset $u + M^2$ and $S$ contains a generating set for $Q$.  Now suppose that $I$ is a finitely generated ideal of $S$.  Then $I = (a_1, \ldots ,a_k)$ for some $a_i \in S$.  Let $c \in IB \cap S$. Then there is an $R_m$ such that $c \in R_m$ and $a_i \in R_m$ for every $i = 1,2, \ldots k$.  By construction, we have that $c \in (a_1, \ldots ,a_k)B \cap R_m \subseteq (a_1, \ldots ,a_k)R_{m + 1} \subseteq (a_1, \ldots ,a_k)S = I$.  It follows that $IB \cap S = I$ for every finitely generated ideal $I$ of $S$.
\end{proof}

We are now ready for the construction of our final UFD.

\begin{theorem}\label{FinalUFD}
Let $(B,M)$ be a local domain containing the rationals with depth$B \geq 2$, $B/M$ uncountable, and $|B| = |B/M|$.  
%Let $T = \widehat{B}$ and suppose that if $J \in \Min(T)$ then $T/J$ is a RLR.  
%Let $P$ be a height one prime ideal of $B$.  
Then there exists a quasi-local UFD $(A, M \cap A)$ such that
\begin{enumerate}
\item $A \subseteq B$,
%\item $P \cap A \neq (0)$,
\item The map $A \longrightarrow B/M^2$ is onto,
\item $IB \cap A = I$ for every finitely generated ideal $I$ of $A$, and
\item 
%If $Q \in \Spec(B)$ with ht$Q \geq 2$ then 
 $A$ contains a generating set for all $Q \in \Spec(B)$ satisfying the condition that if $P \in \Spec(B)$ with $P \in \Ass(B/bB)$ for some $b \in B$, then $Q \not\subseteq P$.
\end{enumerate}
\end{theorem}

\begin{proof}
Let $R_0=\mathbb{Q}$ and note that $R_0$ is a PN-subring of $B$.  
%Since $P$ is a height one prime ideal of an uncountable domain, it contains uncountably many elements.  Choose $p \in P$ such that $p$ is transcendental over $\mathbb{Q}$.  Then $R_0 = R[p]_{(R[p] \cap M)}$ is a PN-subring of $B$ with $R_0 \cap P \neq (0)$.  
Let $\Omega_1 = B/M^2$ and let $$\Omega_2 = \{Q \in \Spec(B) \, | \, \mbox{if } b \in B \mbox{ and } P \in \Ass(B/bB), \mbox{ then } Q \not\subseteq P\}.$$  Note that $|B/M^2| = |B/M|$ and $|\Omega_2| \leq |B| = |B/M|$.  Let $\Omega = \Omega_1 \times \Omega_2$ and observe that $|\Omega| = |B/M|$. 
%Suppose $|\Omega_2| < |B/M|$, and let $P$ be a height one prime ideal of $B$ that is contained in a height two prime ideal of $B$. Then dim$(B/P) \geq 2$ and the cardinality of the set of height one prime ideals of $B/P$ is less than $|B/M|$.  Letting $C$ be the set of height one prime ideals of $B/P$ and $D = \{0\}$, Lemma \ref{primeavoid} gives us that there is an $\overline{m} \in M/P$ such that $\overline{m}$ is not in a height one prime ideal of $B/P$, a contradiction. It follows that $|\Omega_1| = |\Omega_2|$.  
Well-order $\Omega$ using an index set $\Psi$ such that the first element of $\Psi$ is $0$ and such that each element of $\Omega$ has fewer than $|\Omega|$ predecessors. If $\alpha \in \Psi$, then denote the element in $\Omega$ that corresponds to $\alpha$ by $(b_{\alpha} + M^2, Q_{\alpha})$. 
%Let
%$$\Omega = \{(b_\alpha + M^2, Q_{\alpha}) \, | \, \alpha \in \Psi\}$$
%and note that $\Omega$ is well-ordered using the index set $\Psi$.

We now recursively define a family of PN-subrings $\{R_{\alpha} \, | \, \alpha \in \Psi\}$.  We have already defined $R_0$.  Let $\alpha \in \Psi$ and assume that $R_{\beta}$ has been defined for all $\beta < \alpha$.  If $\gamma(\alpha) < \alpha$ define $R_{\alpha}$ to be the PN-subring obtained from Lemma \ref{together} so that $R_{\gamma(\alpha)} \subseteq R_{\alpha}$, $|R_{\alpha}| = |R_{\gamma(\alpha)}|$, prime elements in $R_{\gamma(\alpha)}$ are prime in $R_{\alpha}$, $R_{\alpha}$ contains an element of the coset $b_{\gamma(\alpha)} + M^2$, $R_{\alpha}$ contains a generating set for $Q_{\gamma(\alpha)}$, and, for every finitely generated ideal $I$ of $R_{\alpha}$, $IB \cap R_{\alpha} = I$.  If $\gamma(\alpha) = \alpha$, define $R_{\alpha} = \bigcup_{\beta < \alpha}R_{\beta}$.  

Define $A = \bigcup_{\alpha \in \Psi}R_{\alpha}$.  By Lemma \ref{unioning2}, $(A, M \cap A)$ satisfies the conditions for being a PN-subring except for the condition that $|A| < |B/M|$.  In particular, $A$ is a UFD.  We now show that $IB \cap A = I$ for every finitely generated ideal $I$ of $A$. Let $I = (a_1,a_2, \ldots ,a_n)$ be a finitely generated ideal of $A$ and let $c \in IB \cap A$. Then there is an $\alpha \in \Psi$ such that $c, a_1, a_2, \ldots ,a_n \in R_{\alpha}$ and $JB \cap R_{\alpha} = J$ for every finitely generated ideal $J$ of $R_{\alpha}$.  Letting $J = (a_1, a_2, \ldots ,a_n)R_{\alpha}$, we have $c \in (a_1, a_2, \ldots ,a_n)B \cap R_{\alpha} = J \subseteq JA = I$, and it follows that $IB \cap A = I$.
By construction, $A$ satisfies the rest of the numbered conditions in the statement of the theorem.  
\end{proof}

Finally, we show that the UFD constructed in Theorem \ref{FinalUFD} satisfies our desired properties.

\begin{theorem}(The UFD Theorem)\label{FinalUFD2}
Let $(B,M)$ be a local domain containing the rationals with depth$B \geq 2$, $B/M$ uncountable, and $|B| = |B/M|$.  Let $0 \leq h \leq \mbox{dim}B - 2$.
%suppose that if $J \in \Min(T)$ then $T/J$ is a RLR.  
%Let $P$ be a height one prime ideal of $B$.  
Suppose that if $b \in B$  and $P \in \Ass(B/bB)$, then dim$(B/P) > h$.
Then there exists a local UFD $(A, M \cap A)$ such that
\begin{enumerate}
\item $A \subseteq B$,
\item $\widehat{A} = \widehat{B} = T$,
\item The map $f:\Spec(B) \longrightarrow \Spec(A)$ given by $f(P) = A \cap P$ is onto and, if $P$ is a prime ideal of $B$ with dim$(B/P) \leq h$ then $f(P)B = P$.  In particular, dim$(B/P) \leq h$ if and only if dim$(A/f(P)) \leq h$.  Moreover, if $P$ and $P'$ are prime ideals of $B$ with dim$(B/P) \leq h$ and dim$(B/P') \leq h$ then $f(P) = f(P')$ implies that $P = P'$. 
%\item $P \cap A \neq (0)$,
%\item The map $A \longrightarrow B/M^2$ is onto,
%\item $IB \cap A = I$ for every finitely generated ideal $I$ of $A$, and
%\item If $Q \in \Spec(B)$ with ht$Q \geq 2$ then $A$ contains a generating set for $Q$.
\end{enumerate}
\end{theorem}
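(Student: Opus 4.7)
The plan is to apply Theorem \ref{FinalUFD} directly to obtain a candidate quasi-local UFD $A$, promote it to a local ring using Proposition \ref{completionsame}, and then read off condition $(3)$ from the extra properties guaranteed by Theorem \ref{FinalUFD}. First, invoke Theorem \ref{FinalUFD} to produce a quasi-local UFD $(A, M \cap A) \subseteq B$ such that the induced map $A \longrightarrow B/M^2$ is surjective, $IB \cap A = I$ for every finitely generated ideal $I$ of $A$, and $A$ contains a generating set for every prime ideal $Q$ of $B$ with $\height Q \geq 2$. The first two of these are exactly the hypotheses of Proposition \ref{completionsame}, so $A$ is Noetherian with $\widehat{A} = \widehat{B} = T$. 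A Noetherian quasi-local ring is local, so conditions $(1)$ and $(2)$ of the theorem are immediate.

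For condition $(3)$, I would first establish surjectivity of $f$. Given any $\mathfrak{q} \in \Spec(A)$, use faithful flatness of the completion map $A \hookrightarrow \widehat{A} = T$ to lift $\mathfrak{q}$ to some $Q \in \Spec(T)$ with $Q \cap A = \mathfrak{q}$. Setting $P = Q \cap B \in \Spec(B)$, the chain of inclusions $A \subseteq B \subseteq T$ gives $f(P) = P \cap A = (Q \cap B) \cap A = Q \cap A = \mathfrak{q}$, which proves surjectivity.

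Next, suppose $P \in \Spec(B)$ with $\height P \geq 2$. By property $(4)$ of Theorem \ref{FinalUFD}, $A$ contains a generating set $\{x_1, \dots, x_k\}$ for $P$; these generators all lie in $f(P) = P \cap A$, so $f(P)B \supseteq (x_1, \dots, x_k)B = P$, and the reverse containment is obvious, giving $f(P)B = P$. The height-at-least-two claim for $f(P)$ then uses crucially that $A$ is a UFD: if $f(P)$ had height at most one, then either $f(P) = (0)$ (forcing $P = f(P)B = (0)$) or $f(P) = \pi A$ for a prime element $\pi$ of $A$ (forcing $P = \pi B$, which has height at most one by Krull's principal ideal theorem); both possibilities contradict $\height P \geq 2$. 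Finally, if $P, P' \in \Spec(B)$ both have height at least two with $f(P) = f(P')$, then $P = f(P)B = f(P')B = P'$, which establishes the injectivity assertion and completes condition $(3)$.

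The heavy machinery has already been done in Theorem \ref{FinalUFD} and Proposition \ref{completionsame}, so the only genuinely new content here is the short spectral argument above. The one subtle point is ensuring that $f$ sends height-$\geq 2$ primes to height-$\geq 2$ primes; this is where the UFD hypothesis pays off, since it prevents a height-two prime of $B$ from contracting to a principal (hence height-$\leq 1$) prime of $A$.
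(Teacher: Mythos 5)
Your proposal is correct and follows the same route as the paper: invoke Theorem \ref{FinalUFD} and Proposition \ref{completionsame} for conditions (1) and (2), lift through $T$ for surjectivity, use the generating set to get $f(P)B = P$, and apply the UFD property to rule out height $\leq 1$ for $f(P)$. The only cosmetic difference is that you explicitly separate out the $f(P) = (0)$ case, whereas the paper notes $A \cap P \neq (0)$ as an immediate consequence of $f(P)B = P$.
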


\begin{proof}
Let $A$ be the UFD obtained from Theorem \ref{FinalUFD}.  By Proposition \ref{completionsame}, $A$ is Noetherian and $\widehat{A} = \widehat{B} = T$.  If $J \in \Spec(A)$, then there is a prime ideal $P$ of $T$ such that $P \cap A = J$.  Noting that $(B \cap P) \cap A = P \cap A = J$, we have that $f$ is onto. 

Now let $Q \in \Spec(B)$ such that dim$(B/Q) \leq h$.  We claim $A$ contains a generating set for $Q$. To see this, first suppose that there is a $b \in B$ and $P \in \Ass(B/bB)$ such that $Q \subseteq P$. By hypothesis, dim$(B/P) >h$ and so dim$(B/Q) > h$, a contradiction. 
%if $b \in B$ and $P \in \Ass(B/bB)$, Since $A$ satisfies condition (4) from Theorem \ref{FinalUFD},  $A$ contains a generating set for $P$. Suppose $r \in B$ and $J \in \Ass(B/rB)$ with $P \subseteq J$.  Since the completion of $B/rB$ is $T/rT$, there is a $Q \in \Spec(T)$ such that $Q \in \Ass(T/rT)$ and $JT \subseteq Q$.  Hence $PT \subseteq JT \subseteq Q$. Since $B$ is a domain, $r$ is a regular element of $T$.  By hypothesis, dim$(T/Q) > h$.  Now, $\widehat{B/P} \cong T/PT$ and so dim$(T/PT) = \mbox{dim}(B/P) \leq h$. But since $PT \subseteq Q$, we have dim$(T/PT) \geq \mbox{dim}(T/Q) > h$, a contradiction. 
Thus $Q$ is not contained in any such prime ideal $P$ of $B$. It follows by condition (4) of Theorem \ref{FinalUFD} that $A$ contains a generating set for $Q$ as claimed. Hence, $(A \cap Q)B = f(Q)B = Q$. Therefore, if $P$ and $P'$ are prime ideals of $B$ with dim$(B/P) \leq h$ and dim$(B/P') \leq h$ then $(A \cap P)B = P$ and $(A \cap P')B = P'$.  Thus if $f(P) = f(P')$ then $P = P'$.

It remains to show that dim$(B/P) \leq h$ if and only if dim$(A/f(P)) \leq h$.  Suppose dim$(B/P) \leq h$. Now,  the completion of $A/(A \cap P)$ is $T/(A \cap P)T$ and the completion of $B/P = B/(A \cap P)B$ is $T/(A \cap P)T.$  It follows that dim$(A/f(P)) = \mbox{dim}(A/(A \cap P)) = \mbox{dim}(T/(A \cap P)T) = \mbox{dim}(B/P) \leq h$.  Conversely, suppose dim$(A/f(P)) \leq h$.  Then dim$(B/P) = \mbox{dim}(T/PT) \leq \mbox{dim}(T/(A \cap P)T) = \mbox{dim}(A/f(P)) \leq h$. \end{proof}

The properties of $f$ ensure that it is onto, order-preserving and, when restricted to the prime ideals of $B$ with coheight at most $h$, is a poset isomorphism from the prime ideals of $B$ of coheight at most $h$ to the prime ideals of $A$ of coheight at most $h$.  In other words, the parts of the spectra of $B$ and $A$ of coheights at most $h$ are the same.  While we use The UFD Theorem to prove our main result, it is an interesting theorem in its own right.  Essentially, The UFD Theorem says that, given a local ring $B$ satisfying the hypothesis of the theorem, one can find a Noetherian UFD with the same completion as $B$ and such that the ``top'' part of the spectrum of $B$ and the spectrum of the UFD are the same.  

In Section \ref{GluingNodes}, we define precisely what we mean by growing and gluing in posets, and we show that every finite poset with one maximal element can be realized by applying our growing and gluing process to a poset with exactly one element.  Then, in Section \ref{MainResults}, we use results from Section \ref{Growing and Gluing} to construct a ring with nice properties such that the spectrum of the ring contains a saturated subposet that is an isomorphic copy of our given finite poset.  We then construct a ring that contains a saturated subposet that is an isomorphic copy of our given poset in which all elements of the spectrum of the ring that correspond to the given finite poset have coheight at most $h$.   Finally, we use Theorem \ref{FinalUFD2} to show that our given finite poset is isomorphic to a saturated subset of the spectrum of a Noetherian UFD.

\section{Gluing and Splitting Nodes in Partial Orders}\label{GluingNodes}

Given a finite poset $X$ that we wish to be a saturated subset of the spectrum of a Noetherian UFD or a quasi-excellent domain, we seek to systematically and carefully ``unravel" $X$ into increasingly simpler posets that we can more easily show embed as a saturated subset into the spectrum of a desirable ring. After making the poset simpler and obtaining an embedding of that poset into a smaller, nice ring, we then reverse the process to obtain a larger ring that contains $X$ as a saturated subset of its spectrum  using the tools of Sections 3 and 4. This unraveling process has two different flavors: ``splitting" height zero nodes and ``retracting" height zero nodes into certain height one nodes. For instance, if one were to read Figure \ref{GrowingGluingPicture1} from right to left, it would appear that certain nodes have been split and other nodes have been retracted. In the end, we show that if $X$ is any poset with a unique maximal node, then $X$ can be systematically unraveled and reduced all the way to a point, regardless of how complicated $X$ is (see Theorem \ref{reduceToPoint}). To talk about splitting nodes in posets, we find that it is simpler to first define the notion of ``gluing" nodes together in posets. In this way, if $Y$ is obtained from $X$ by gluing nodes together in $X,$ then we can view $X$ as a poset where certain nodes in $Y$ have been split. Similarly, one may view growing as a reversal of retracting in some fashion. It is the goal of this section to make these notions rigorous.

\subsection{Complete subsets and equivalence relations on finite partial orders} Let $(X, \le)$ be a poset. Recall from Definition \ref{completeDefinition} that if $S \subseteq X$ is a subposet of $X,$ we call $S$ a \textit{complete subset} of $X$ if for all $s, t \in S$ and $x \in X,$ if $s \le x \le t,$ then $x \in S.$  Fix a poset $(X, \le),$ and a complete subposet $S \subseteq X.$ Define $x \sim y$ if and only if $x = y$ or both $x$ and $y$ are in $S.$

Consider $X/\sim = \{[x]: x \in X\},$ where $[x]$ is the equivalence class containing $x,$ and define $g_S: X \to X/\sim$ as $g_S(x) = [x].$ Declare $[x] \le_{\sim} [y]$ if and only if $x \le_X y$ or there are $s_1, s_2 \in S$ such that $x \le_X s_1,$ and $s_2 \le_X y.$ It is a straightforward (albeit tedious) exercise to show that this order is well-defined. 

\begin{lem}\label{gluesaturated} With $g_S, X,$ and $\sim$ as above, if $S$ is a complete subset of $X,$ then $(X/\sim, \le_{\sim})$ is a poset. Moreover, if $h: X \to Z$ is any poset map such that $h$ is constant on $S,$ then there is a unique poset map $\vp: X/\sim \to Z$ such that $\vp g_S = h.$ \end{lem}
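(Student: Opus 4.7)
The plan is to establish two things: first, that $\le_{\sim}$ is a genuine partial order on $X/\sim$ (granting the well-definedness claim the paper flags as a routine exercise), and second, that the natural map $\varphi([x]) := h(x)$ supplies the promised factorization uniquely.

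For the poset axioms, reflexivity is immediate from $x \le_X x$. For transitivity, given $[x] \le_{\sim} [y] \le_{\sim} [z]$, I would run through the four cases corresponding to which disjunct of the definition witnesses each relation. In every case the conclusion is produced either directly (if both witnesses are the $x \le_X y$ clause, concatenate in $X$) or by harvesting an $(s_1, s_2)$ pair from whichever disjunct supplies a witness in $S$ on the appropriate side; no use of completeness is needed here. Antisymmetry is where the structural hypothesis on $S$ must be invoked: if $[x] \le_{\sim} [y]$ and $[y] \le_{\sim} [x]$, the case in which both inequalities use the $x \le_X y$ clause gives $x = y$ at once, while any case that uses the second disjunct yields elements $s, s' \in S$ with $s \le_X x \le_X s'$ (or the analogous chain through $y$), and completeness of $S$ then forces $x$, hence also $y$, to lie in $S$, so $[x] = [y] = [S]$.

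For the universal property, define $\varphi: X/\sim \to Z$ by $\varphi([x]) := h(x)$. Well-definedness is immediate: the only nonsingleton equivalence class is $S$ itself, and $h$ is constant there by hypothesis. The identity $\varphi g_S = h$ holds by construction, and uniqueness of $\varphi$ is forced because $g_S$ is surjective. To see $\varphi$ is a poset map, I would unpack $[x] \le_{\sim} [y]$: in the first case $x \le_X y$ gives $h(x) \le_Z h(y)$ since $h$ is a poset map, and in the second case the chain $h(x) \le_Z h(s_1) = h(s_2) \le_Z h(y)$ — where the middle equality uses that $h$ is constant on $S$ — delivers the same conclusion.

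The main obstacle is the antisymmetry verification, since it is the single place where the completeness hypothesis on $S$ is genuinely needed; reflexivity and transitivity are formal, the universal property is standard quotient-by-equivalence-relation bookkeeping, and the well-definedness of $\le_{\sim}$ on classes is an unglamorous case check that the paper has already waived.
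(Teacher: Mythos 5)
Your proposal is correct and follows essentially the same route as the paper's proof: a direct case check on the disjuncts defining $\le_{\sim}$ for the poset axioms (invoking completeness only for antisymmetry), and the standard quotient map $\varphi([x]) = h(x)$ for the universal property, with uniqueness coming from surjectivity of $g_S$. The paper simply writes out the four cases more explicitly than you do, but the substance is identical.
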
 

\begin{proof} The relation $\le_{\sim}$ is clearly reflexive, so we need only show that it is anti-symmetric and transitive. 

(Anti-Symmetry) Suppose $[x] \le_{\sim} [y],$ and $[y] \le_{\sim} [x].$ If $x \le_X y,$ and $y \le_X x,$ then $x = y,$ so $[x] = [y].$ If $x \not \le_X y,$ and $y \le_X x,$ then there are $s_1, s_2 \in S$ such that $x \le_X s_1$ and $s_2 \le_X y.$ Then $$s_2 \le_X y \le_X s_1,$$ so $y \in S$ since $S$ is a complete subset of $X.$ Therefore, $x \in S,$ so $[x] = [y].$ The case where $x \le_X y$ and $y \not \le_X x$ is similar. Lastly, if $x \not \le_X y$ and $y \not \le_X x,$ then there are $s_1, s_2, s_3, s_4 \in S$ such that $x\le_X s_1, s_2 \le_X y \le_X s_3,$ and $s_4 \le_X x.$ Therefore, both $x, y \in S,$ so $[x] = [y].$ 

(Transitivity) Suppose $[x] \le_{\sim} [y],$ and $[y] \le_{\sim} [z].$ If $x \le_X y$ and $y \le_X z,$ then $x \le_X z.$ If $x \not \le_X y$ and $y \le_X z,$ then there are $s_1, s_2 \in S$ such that $x \le_X s_1,$ and $s_2 \le_X y \le_X z,$ so $[x] \le_{\sim} [z].$ The case where $x \le_X y,$ and $y \not \le_X z$ is similar. Finally, if neither $x \le_X y$ nor $y \le_X z,$ then there are $s_1, s_2, s_3, s_4 \in S$ such that $x \le_X s_1, s_2 \le_X y, y\le_X s_3,$ and $s_4 \le_X z,$ so $[x] \le_{\sim} [z].$ 

Let $h: X \to Z$ be any poset map. Define $\vp: X/\sim \to Z$ as $\vp([x]) = h(x).$ Now $\vp$ is well-defined because $h$ is constant on $S.$ If $[x] \le_{\sim} [y],$ and $x \not \le_X y,$ then there are $s, t \in S$ such that $x \le_X s$ and $t \le_X y.$ Therefore, $h(x) \le_Z h(s) = h(t) \le_Z h(y).$ By construction, $\vp g_S = h.$ If $\vp'$ is any other poset map such that $\vp' g_S = h,$ and $[x] \in X/\sim,$ then $[x] = g_S(x),$ so $$\vp'([x]) = \vp'g_S(x) = h(x) = \vp g_S(x) = \vp([x]).$$ So $\vp$ is unique. 
\end{proof} 

Recall that if $X$ is a poset, and $A \subseteq X,$ we say $A$ is an \textit{antichain} if for all $a, b \in A,$ $a \le b$ implies that $a = b.$

\begin{lem}\label{saturatedexamples} Let $(X, \le)$ be a poset. The following sets are complete subsets of $X:$

\begin{enumerate} 
\item Any antichain in $X.$ In particular, any subset of $\min X$ or $\max X.$ 
\item The set $g^{-1}(y),$ where $g: X \to Y$ is any poset map, and $y \in Y.$ 
\item $G_X(x) := \{u \in X: x \le_X u\},$ where $x \in X.$
\item $L_X(x) := \{u \in X: u \le_X x\},$ where $x \in X.$ 
\item The intersection of any collection of complete subsets of $X.$ 
\end{enumerate}
\end{lem}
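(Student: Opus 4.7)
The plan is to verify the definition of a complete subset in each of the five cases, which amounts to checking that if $s \le x \le t$ with $s, t$ in the set in question, then $x$ belongs to the set as well. Each case is a short direct check; the argument is essentially bookkeeping using only the axioms of a partial order and the definitions given. I would organize the proof as five short paragraphs, one per item, and handle them in the order (5), (1), (2), (3), (4), since (5) provides a convenient closure property and (1) is an easy first example.

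For (5), I would let $\{S_\alpha\}_{\alpha \in I}$ be a family of complete subsets and put $S = \bigcap_\alpha S_\alpha$. Given $s, t \in S$ and $x \in X$ with $s \le x \le t$, the elements $s, t$ lie in every $S_\alpha$, so completeness of each $S_\alpha$ forces $x \in S_\alpha$ for every $\alpha$, hence $x \in S$. For (1), if $A$ is an antichain and $s, t \in A$ with $s \le x \le t$, then transitivity yields $s \le t$, so $s = t$ by the antichain property; then $s \le x \le s$ forces $x = s \in A$. The parenthetical remark about $\min X$ and $\max X$ is immediate since these are antichains.

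For (2), let $g : X \to Y$ be a poset map and fix $y \in Y$. If $s, t \in g^{-1}(y)$ and $s \le x \le t$, applying the poset map gives $y = g(s) \le g(x) \le g(t) = y$, so antisymmetry in $Y$ yields $g(x) = y$, i.e. $x \in g^{-1}(y)$. For (3), if $s, t \in G_X(x)$ (so $x \le s$ and $x \le t$) and $s \le w \le t$, transitivity of $\le$ with $x \le s \le w$ gives $x \le w$, hence $w \in G_X(x)$. Case (4) is dual: if $s, t \in L_X(x)$ and $s \le w \le t$, then $w \le t \le x$ gives $w \in L_X(x)$.

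Since every step is an immediate application of reflexivity, transitivity, antisymmetry, or the definition of a poset map, I do not expect any real obstacle. The only mild subtlety is cosmetic: one should note explicitly that the hypothesis $s \le x \le t$ in the definition of complete subset requires \emph{both} endpoints to come from the subset, which is precisely what makes (3) and (4) work even though these are not antichains. No nontrivial lemma is invoked, and the proof can be presented in under a page.
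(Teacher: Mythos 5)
Your proof is correct and uses essentially the same approach as the paper: a direct verification of the definition of complete subset in each case. The only difference is cosmetic — the paper writes out only items (2) and (5) and asserts that the remaining three ``follow immediately from their definitions,'' whereas you spell out all five; your arguments for (2) and (5) match the paper's verbatim in substance.
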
 

\begin{proof} We prove the second and fifth statements. The rest follow immediately from their definitions. For (2), if $c \le_X x \le_X d$ for $c, d \in g^{-1}(y),$ then $$y \le_Y g(x) \le_Y y,$$ so $g(x) = y,$ and $x \in g^{-1}(y).$ For statement (5), if $\{S_i\}_{i \in I}$ is a collection of complete subsets, and $c \le_X x \le_X d$ for $c$ and $d$ in $\cap_{i \in I} S_i,$ then $x \in \cap_{i \in I} S_i$ since each $S_i$ is complete. \end{proof} 

\subsection{Gluing nodes in a poset} 

\begin{mydef}\label{compatible} Let $X$ and $Y$ and be posets, and let $g: X \to Y$ be a surjective poset map. If $Z$ is a poset and $h: X \to Z$ is any poset map, we say $h$ is \textit{compatible} with $g$ if, for each $y \in Y,$ the restriction of $h$ to $g^{-1}(y)$ is constant. \end{mydef}

\begin{mydef}\label{gluingdef} Let $X$ be a poset. We say that the poset $Y$ is a \textit{gluing} of $X$ with gluing map $g: X \to Y$ if whenever $h: X \to Z$ is compatible with $g,$ there exists a unique poset map $\vp: Y \to Z$ such that $\vp g = h.$ Moreover, if $C \subseteq X$ is a complete subset of $X$ such that:
\begin{enumerate}
\item $g$ is constant on $C,$ and
\item if $g(x) = g(x')$ for distinct $x, x' \in X,$ then both $x, x' \in C,$
\end{enumerate}
then we say \textit{$Y$ is a gluing of $X$ along $C.$}

\end{mydef}

\begin{remark}\label{minimalC}
    Gluings along $C$ are unique up to poset isomorphism. If $g_1: X \to Y_1$ is one gluing along $C$ and $g_2: X \to Y_2$ is another, then $g_1$ and $g_2$ are compatible with each other, so there exist poset maps  $\vp_1: Y_1 \to Y_2$ and $\vp_2: Y_2 \to Y_1$ such that $\vp_1\vp_2 = \id_{Y_2}$ and $\vp_2\vp_1 = \id_{Y_1}.$ Every poset $X$ is a gluing of itself along $C = \emptyset$ with gluing map $g = \id_X.$ In particular, if $Y$ is a gluing of $X$ along $C = \emptyset,$ then $Y \cong X.$  
\end{remark}

\begin{remark}\label{nonGluingEx}
    If $X$ and $Z$ are posets and $g: X \to Y$ is a surjective poset map, one may test that $h: X \to Z$ is compatible with $g$ by showing that for all $x, y \in X,$ if $g(x) = g(y),$ then $h(x) = h(y).$ In fact, $h$ is compatible with $g$ if and only if $h$ factors uniquely through $g;$ that is, there is a unique function $\vp: Y \to Z$ such that $h = \vp g.$ However, even if $h$ is compatible with $g$ and both are poset maps, there is no guarantee that $\vp$ is a poset map. For instance, take $X =\{a, b\}$ to be the antichain with two points, $Y$ to be the chain with two points $\{c < d\}$, and $Z = X.$ Let $h = \text{id}_X$ and define $g(a) = c; g(b) = d.$ Then both $h$ and $g$ are poset maps and $h$ is compatible with $g,$ but $\vp$ is not a poset map since $c < d$ in $Y$ yet $a= \vp(c) \not \le \vp(d) = b$ in $Z.$ Notice also that $Y$ is not a gluing of $X$ with gluing map $g.$ For one, Remark \ref{minimalC} implies that $Y$ would have to be isomorphic to $X,$ which it is not. Moreover, it conflicts with what one would expect to obtain from gluing nodes in $X.$ The only gluings we should expect from an antichain with two points would be merging both points together or doing nothing at all. In summary, $Y$ is a gluing of $X$ with gluing map $g$ if and only if for all poset maps $h: X \to Z,$ $h$ factors uniquely through $g$ via some \textit{poset map} $\vp.$
\end{remark}

\begin{theorem}\label{gluesaturated2} If $(X, \le)$ is a poset and $S$ is a complete subset of $X,$ then there is a gluing $Y$ of $X$ along $S.$ Moreover, if $Z$ is a gluing of $X$ along $S$ via any gluing map $h: X \to Z,$ then whenever $h(x) \le_Z h(y),$ either $x \le_X y$ or there exist $s, t \in S$ such that $x \le_X s$ and $t \le_X y.$ \end{theorem} 
\begin{proof} The first part follows immediately from Lemma \ref{gluesaturated}, where $Y = X/\sim,$ and the gluing map is $g_S: X \to X/\sim.$ 

For the second part, note that $g_S$ is compatible with $h,$ so there exists a poset map $\vp: Z \to Y$ such that $\vp h = g_S.$ If $h(x)\le_Z h(y),$ then $g_S(x) \le_Y g_S(y),$ which implies the conclusion immediately by construction of $Y$ and $g_S.$\end{proof}

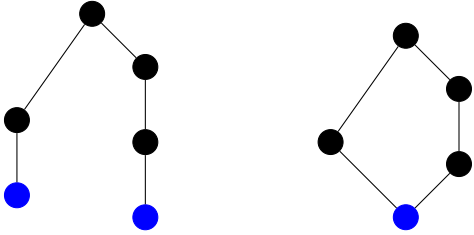
\begin{figure}[h] 
    \centering

\begin{tikzpicture}
%NODES
\node[main] (1) {};
\node[main] (4) [below right of=1] {};
\node[main] (5) [below of=4] {};
\node[main2] (6R) [below of= 5] {};
\node[main] (2) [below left of=4, left of=4] {};
\node[main2] (6L) [below of=2] {};

%DRAWING
\draw (1) -- (4) -- (5) -- (6R);
\draw (1) -- (2) -- (6L);

\end{tikzpicture}\hspace{2cm}
\begin{tikzpicture}
%NODES
\node[main] (1) {};
\node[main] (4) [below right of=1] {};
\node[main] (5) [below of=4] {};
\node[main2] (6) [below left of= 5] {};
\node[main] (2) [below left of=4, left of=4] {};

%DRAWING
\draw (1) -- (4) -- (5) -- (6) -- (2) -- (1);

\end{tikzpicture}
    \caption{A gluing example}
    \label{gluingFigure}
\end{figure}
 
\begin{example}

Consider Figure \ref{gluingFigure}.  
\noindent Let $X$ be the poset on the left, and let $S$ be the set of nodes that are colored blue in $X.$ $S$ is an antichain, so there is a gluing of $X$ along $S$ by Theorem \ref{gluesaturated2}. The poset $Y$ on the right is what results after gluing along $S.$ 
\end{example}

Reading Figure \ref{gluingFigure} from left to right shows how to glue along a subset of the left poset $X$ to obtain the right poset $Y.$ However, if one were to read the same figure from right to left, it would suggest that the left poset can be gotten by ``splitting" the unique blue node at the bottom of $Y$ into the two blue nodes at the bottom of $X.$ In fact, one may go further by then ``retracting" those two blue nodes in $X$ into the black nodes immediately above them.

This point-of-view will be crucial for our ultimate goal of embedding a finite poset into the spectrum of some Noetherian UFD or quasi-excellent domain. Indeed, given a finite poset $K_0$ with a single maximal node and such that $\dim K_0 \ge 1,$ we will find posets $K_1$ and $K_2$ such that $K_0$ is a gluing of $K_1$ along a subset of $\min K_1$ and  $K_2$ is gotten by systematically retracting certain minimal nodes of $K_1$ into their respective covers. Repeating this process as much as is necessary, we will eventually obtain a single point since our original poset $K_0$ has a single maximal node. Put another way, one can obtain any finite poset with a single maximal node by starting with a point, adding new minimal nodes, gluing a subset of them together in a suitable fashion, and repeating that process as necessary. Each such operation (e.g., adding new minimal nodes and gluing them) has a matching operation in the context of the spectrum of commutative rings, and this is what ultimately allows us to prove our main results.

\subsection{Gluing and splitting height zero nodes}

\begin{mydef}
    If $Y$ is a gluing of $X$ along a subset $C \subseteq \min X,$ we say $Y$ is a \textit{height zero gluing} of $X.$
\end{mydef}

We establish some preliminary and important results regarding height zero gluings.

\begin{lem}\label{dimensionsEqual}\label{coverRemark} If $Y$ is a height zero gluing of a finite poset $X$ along $C$ with gluing map $g,$ then the following statements are true:
\begin{enumerate}
    \item $g(\min X) = \min Y,$ 
    \item $\dim X = \dim Y,$ and
    \item if $g(x) <_c g(y),$ there exists $x' \in X$ such that $x' <_c y$ and $g(x') = g(x).$
\end{enumerate}
\end{lem}

\begin{proof} (1) Suppose $x \in \min X$ and $y \le g(x).$ Then $y = g(x')$ for $x' \in X.$ So $g(x') \le g(x)$ and by Theorem \ref{gluesaturated2}  either $x' \le x$ or there are $c_1, c_2 \in C$ such that $x' \le c_1$ and $c_2 \le x.$ If $x' \le x,$ then $x' = x$ since $x \in \min X.$ Hence $y = g(x).$ Otherwise, $x' = c_1$ and $c_2 = x.$ Since $c_1, c_2 \in C,$ we have $y = g(x') = g(c_1) = g(c_2) = g(x)$ still. Therefore, $g(x) \in \min Y$ and we have $g(\min X) \subseteq \min Y.$ Conversely, if $y \in \min Y,$ then $y = g(x)$ for some $x \in X.$ Since $X$ is finite, there is $x' \in \min X$ such that $x' \le x.$ Then $g(x') \le g(x)$ so $g(x') = g(x)$ since $g(x)$ is minimal. Therefore, either $x' = x$ or both $x', x \in C.$ In either case, $x$ is minimal. Therefore, $\min Y \subseteq g(\min X).$

(2) If $x < y$ in $X$ and $g(x) = g(y),$ then both $x, y \in C \subseteq \min X$ which is of course impossible. Therefore, $g(x) < g(y)$ and we see that $\dim X \le \dim Y.$ Conversely, suppose $g(x) < g(y).$ We claim that there is $x' \in X$ such that $g(x') = g(x)$ and $x' < y.$ Either $x < y$ or there exist $s, s' \in C$ such that $x \le s$ and $s' < y.$ In the former case, set $x' = x.$ In the latter case, note that $x = s$ since $s$ is minimal so that $g(x) = g(s) = g(s').$ In particular, we may set $x' = s'$ in this case. Therefore, every chain in $Y$ lifts to a chain of equivalent length in $X$ so $\dim Y \le \dim X$ hence $\dim Y = \dim X.$

(3) This follows from the proof of (2).
\end{proof}

\begin{mydef}
    If $Y$ is a finite poset, let $n(Y)$ denote the number of height zero nodes of $Y$ that have at least two covers. 
\end{mydef}

\begin{mydef}
    Let $Y$ be a finite poset. We say $X$ is a \textit{height zero splitting} of $Y$ if the following statements hold:
    \begin{enumerate}
        \item $Y$ is a height zero gluing of $X,$ and
        \item either $n(X) = 0$ or $n(X) < n(Y)$
    \end{enumerate}
\end{mydef}

We are thankful to the referee for suggesting the approach in the proof of the next lemma. It greatly simplified our original strategy for splitting nodes in finite posets. 

\begin{lem}\label{everythingSplits}
   Every finite poset has a height zero splitting.
\end{lem}

\begin{proof}
    Let $Y$ be a finite poset. If $n(Y) = 0,$ we may take $Y$ to be its own height zero splitting with gluing map the identity. Assume $n(Y) > 0,$ and let $x \in \min Y$ be an element with at least two covers. Let $A = \{(x, y) \in Y \times Y: x<_c y\}.$ Let $X:= Y\setminus \{x\} \cup A.$ We place an order relation on $X.$ Suppose $u, v \in X.$ If $u, v \in Y\setminus \{x\},$ then set $u \le_X v$ if and only if $u \le_Y v.$ If $u, v \in A,$ then declare $u\le_X v$ if and only if $u = v.$ Otherwise, put $u \le_X v$ if and only if $u = (x, y)$ for some $(x, y) \in A$ and $y \le_Y v.$ We claim $X$ has the desired properties. 
    
    If $(x,y) \in A$ and $(x, y) \le_X w,$ then $y \le_X w.$ So $y$ is the only cover of $(x, y)$ in $X.$ In particular, if $t \in \min X$ has at least two covers, then $t \in Y\setminus\{x\}.$ In fact, $t \in \min Y\setminus\{x\} \subsetneq \min Y.$ So $n(X) < n(Y).$ Also, $A \subseteq \min X$ since if $u \le_X v$ for $v \in A$ we must have $u \in A$ by definition of the order on $X.$ Therefore, $u = v.$

    Define $g:X \to Y$ as $g(z) = z$ if $z \in Y\setminus\{x\}$ and $g(x, y) = x$ if $(x, y) \in A.$ We claim $g$ is a gluing map along $C = g^{-1}(x) = A$. Note that $g$ satisfies (1) and (2) from Definition \ref{gluingdef}. Let $h: X \to Z$ be a poset map that is compatible with $g,$ and let $\vp: Y \to Z$ be the induced set map. Suppose $a \le_Y b.$ If $a = b,$ then of course $\vp(a) = \vp(b)$ by compatibility of $h.$ So assume $a <_Y b.$ If $a \ne x,$ then $b \ne x,$ so both $a, b \in X$ with $a <_X b$ and hence $h(a) \le_Z h(b).$ If $a = x,$ then since $Y$ is finite, there exists a cover $y$ of $x$ such that $x <_c y \le b.$ So $(x, y) \le_X b$ and $\vp(x) = h(x, y) \le_Z h(b) = \vp(b).$ Thus $\vp$ is a poset map. If $\vp'$ is another poset map from $Y$ to $Z$ such that $\vp'g = h,$ then we have $\vp'g = \vp g$ so that $\vp = \vp'$ since $g$ is surjective. \end{proof}

\begin{mydef}\label{retractionDefinition}
    Let $X$ be a poset such that $\dim X > 0.$ We say a height-one node $x$ is \textit{simple} if for all $u \in X$ such that $u < x,$ $x$ is the only node that covers $u.$ If $x$ is a simple node in $X,$ define $D_x:=\{u\in X: u< x\}.$ We say $X'$ is a \textit{retraction} of $X$ if there exists a simple node $x \in X$ such that $X' = X \setminus D_x$ with the same order relations that are on $X.$
\end{mydef}

\begin{remark}
    Although the definition does not include an explicit ``retraction" in the strictest sense of the word, there is a surjective poset map $r: X \to X'$ such that $r(u) = x$ for all $u \in D_x$ and $r(z)= z$ for all $z \in X$ outside $D_x.$ In this way, one can view the elements of $D_x$ as having been retracted into $x$ via $r.$ Yet another way to think of $X'$ is to start with $X$ and adjoin $D_x$ below $x$ (write $X = X' \cup D_x$ and recover the original order on $X$). In some sense, nodes have been ``grown" below the height zero node $x$ in $X'$ to form the original $X$ from $X'.$
\end{remark}

\begin{lem}\label{reduceDimension}
    If $X$ is a finite poset such that $n(X) = 0$ and $\dim X > 0,$ then there exists a sequence of posets $(X_1, \ldots, X_{m+1} = X)$ such that $X_i$ is a retraction of $X_{i+1}$ for all $1 \le i \le m,$ and $\dim X_1 < \dim X.$
\end{lem}
\begin{proof}
Let $x_1, \ldots, x_m$ be an enumeration of the height one nodes of $X.$ Since $n(X) = 0,$ each $x_i$ is simple. Let $X_{m+1} = X,$ set $X_m = X_{m+1}\setminus D_{x_m}.$ Having defined $X_i$ for some $1 < i \le m+1,$ let $X_i = X_{i+1}\setminus D_{x_i}.$ Now $X_1 = X \setminus \cup_{i=1}^m D_{x_i} \subset X.$ If $C$ is a chain in $X$ whose length is $\dim X,$ then $x_j \in C$ for some $j$ so $x_j$ has height zero in $X_j$ hence $X_1.$ Therefore, $\dim X_1 = \dim X - 1.$\end{proof}
\begin{remark}
    If $X'$ is a retraction of $X$ where $\dim X > 0,$ one may view $X$ as the union of $X'$ along with a set of new minimal nodes that have been added ``below" a certain minimal node of $X'.$ As another note, if $X' = X \setminus D_x,$ then $x$ has height zero (i.e. is minimal) in $X'.$
\end{remark}
\subsection{Reduction sequences} If we start with a nonempty finite poset $X,$ we may apply Lemma \ref{everythingSplits} repeatedly to eventually obtain a poset $X'$ with $n(X') = 0.$ Afterwards, we may repeatedly perform retractions, starting with $X',$ to eventually obtain a poset $X''$ such that $\dim X'' < \dim X.$ In this way, we have obtained a sequence of posets $(X'', \ldots, X', \ldots, X)$ where each term of the sequence is either a height zero splitting or a retraction of the next. In this scenario, we say $X$ has been \textit{reduced} to $X''$ and the sequence is a \textit{reduction sequence.} 

\begin{comment}
%If we start with a nonempty finite poset $X,$ it has a height zero splitting by Lemma \ref{everythingSplits}. By repeated application of \ref{everythingSplits} if necessary, we obtain a poset $X'$ with $n(X') = 0.$ 

%Since every minimal node in $X'$ has at most one cover, it has a retraction $X''$ as in Definition \ref{retractionDefinition}. Put another way, we get a sequence of posets $(X'', X', X)$ where $X'$ can be viewed as $X''$ along with a new set of minimal nodes that were added below height zero nodes of $X''$ and where $X$ is a height zero gluing of $X'.$ This is an example of what we call a \textit{reduction sequence,} and in this context we say $X$ can be \textit{reduced} to $X''.$

    \begin{mydef}
    A sequence $(X_1, \ldots, X_n)$ of posets is called a \textit{reduction sequence} if the following hold:
    \begin{enumerate}
        \item $n \equiv 0 \pmod 3,$
        \item given $X_i, X_{i+1}, X_{i+2}$ for $i \equiv 1 \pmod 3,$
        we have $X_{i+1}$ is a height zero splitting of $X_{i+2}$ and $X_i$ is the retraction of $X_{i+1},$ and
        \item if $i \equiv 0 \pmod 3$ for $1 \le i < n,$ then $X_i = X_{i+1}.$
    \end{enumerate}
    If $Y$ and $X$ are posets, we say $Y$ \textit{reduces to} $X$ if there exists a reduction sequence $(X_1, \ldots, X_n)$ such that $X_1 = X$ and $X_n = Y.$
\end{mydef}

\end{comment}

\begin{mydef}
    A sequence $(X_1, \ldots, X_n)$ of posets is called a \textit{reduction sequence} if for all $1\le i < n,$ $X_i$ is a height zero splitting of $X_{i+1},$ a retraction of $X_{i+1},$ or equal to $X_{i+1}.$ If $X$ and $Y$ are posets, we say $Y$ \textit{reduces to} $X$ if there exists a reduction sequence of the form $(X_1 = X, \ldots, X_n = Y).$
\end{mydef}

We are now ready to prove the main result of this section. It is stated in the context of posets with a unique maximal node since that is the focus of our main results.
\begin{theorem}\label{reduceToPoint}
    Every finite poset with a unique maximal node reduces to a point. 
\end{theorem}
\begin{proof}
    We induct on $\dim Y.$ If $\dim Y = 0,$ then $Y$ is a point and the sequence $(Y, Y)$ is a reduction sequence. Now suppose $\dim Y > 0$ and the assertion has been proved for all posets with a unique maximal node whose dimension is less than $\dim Y.$ By repeated application of Lemma \ref{everythingSplits}, there exists a reduction sequence of posets $(X, \ldots, Y)$ such that each predecessor is a height zero splitting of its successor and $n(X) = 0.$ Note $\dim X = \dim Y$ by Lemma \ref{dimensionsEqual}. By Lemma \ref{reduceDimension}, there exists a reduction sequence $(Z, \ldots, X)$ of posets such that each predecessor is a retraction of its successor and $\dim Z < \dim X = \dim Y.$ Since $Z$ has a unique maximal node, there exists, by the induction hypothesis, a reduction sequence of the form $(Z_1, \ldots, Z_m = Z)$ where $Z_1$ is a point. Therefore, $Y$ reduces to a point via the reduction sequence $(X_1, \ldots, X_r=Y)$ which is gotten by concatenating and relabeling the prior reduction sequences. \end{proof}

\section{The Main Results}\label{MainResults}

In this section we prove our two main results, Corollary \ref{FinalQuasiExcellent} and Theorem \ref{FinalUFDTheorem}.  Corollary \ref{FinalQuasiExcellent} states that every finite poset is isomorphic to a saturated subset of the spectrum of a quasi-excellent domain and Theorem \ref{FinalUFDTheorem} states that every finite poset is isomorphic to a saturated subset of the spectrum of a Noetherian UFD.  We begin with a preliminary lemma. Then, in Theorem \ref{elevateX}, we use Theorem \ref{onegrowing} to show that if $X$ is a retraction of the finite poset $Z$, and $B$ is a local ring (satisfying some nice properties) whose spectrum contains a saturated subset that is isomorphic to $X$, then there is a local ring $S$ (also satisfying some nice properties) whose spectrum contains a saturated subset that is isomorphic to $Z$.

\begin{lemma}\label{embedding}
Let $B$ be a Noetherian ring, let $y$ and $z$ be indeterminates, let $A = B[[y,z]]$, and let $X$ be a poset.  Suppose $\phi: X \longrightarrow \Spec(B)$ is a saturated embedding.  Then $\Psi: X \longrightarrow \Spec(A)$ given by $\Psi(u) = (\phi(u), y, z)A$ is a saturated embedding.
\end{lemma}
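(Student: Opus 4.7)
The plan is to verify the three conditions that $\Psi$ must satisfy: that each $\Psi(u)$ is a genuine prime ideal of $A$, that $\Psi$ is a poset embedding, and that $\Psi$ preserves covers. The whole argument hinges on the observation that, because $A/(y,z)A \cong B$, the ideals $(y,z)A$ behave as a ``floor'' inside $A$, and the primes of $A$ that contain $(y,z)$ are in order-preserving bijection with $\Spec(B)$.

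First I would note that $A/\Psi(u) = A/(\phi(u), y, z)A \cong B/\phi(u)$, which is a domain, so $\Psi(u) \in \Spec(A)$. The same isomorphism shows $\Psi(u) \cap B = \phi(u)$ for every $u \in X$. From this it is immediate that $\Psi$ is order-preserving: $u \le v$ gives $\phi(u) \subseteq \phi(v)$ and hence $(\phi(u), y, z)A \subseteq (\phi(v), y, z)A$. Conversely, if $\Psi(u) \subseteq \Psi(v)$, then contracting to $B$ gives $\phi(u) \subseteq \phi(v)$, and since $\phi$ is an embedding, $u \le v$. So $\Psi$ is a poset embedding.

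The main (but still mild) content is to show that $\Psi$ is saturated. Suppose $v$ covers $u$ in $X$, and let $Q \in \Spec(A)$ satisfy $\Psi(u) \subseteq Q \subseteq \Psi(v)$. The key observation is that $y, z \in \Psi(u) \subseteq Q$, so writing any element of $Q$ as a power series in $y, z$ and reducing modulo $(y,z)$ shows $Q = (Q \cap B, y, z)A$. Contracting the chain $\Psi(u) \subseteq Q \subseteq \Psi(v)$ to $B$ yields $\phi(u) \subseteq Q \cap B \subseteq \phi(v)$. Because $\phi$ is a saturated embedding, $\phi(v)$ covers $\phi(u)$ in $\Spec(B)$, forcing $Q \cap B \in \{\phi(u), \phi(v)\}$, and therefore $Q \in \{\Psi(u), \Psi(v)\}$. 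Finally, $\Psi(u) \ne \Psi(v)$ because their contractions $\phi(u)$ and $\phi(v)$ are distinct, so $\Psi(v)$ genuinely covers $\Psi(u)$. I do not anticipate a real obstacle here; the only thing worth being careful about is the description of primes of $A$ containing $(y,z)$, which is the single non-formal step.
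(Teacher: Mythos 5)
Your proposal is correct and follows essentially the same route as the paper's proof: contract to $B$ to reduce the embedding and cover conditions to the corresponding properties of $\phi$, and use the fact that a prime $Q$ of $A$ containing $y,z$ is determined by its contraction $Q \cap B$ via constant terms. Your phrasing $Q = (Q \cap B, y, z)A$ is a slightly cleaner packaging of the same constant-term argument that the paper carries out case-by-case (first for $Q \cap B = \phi(u)$, then for $Q \cap B = \phi(v)$), but the underlying mechanism is identical.
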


\begin{proof} Let $\pi$ be the map from $\Spec(B)$ to $\Spec(A)$ given by $\pi(P) = (P,y,z),$ and note that $\pi$ is a poset embedding. Since $A/(y,z) \cong B,$ the prime ideals of $A$ containing $(y,z)$ are in one-to-one-correspondence with the prime ideals of $B$. It follows that the image of $\pi$ is all prime ideals of $A$ that contain $(y,z)$. Hence, $\pi$ is a saturated embedding and so $\Psi = \pi \phi$ is also a saturated embedding. \end{proof}

When we write $|R| = c$ for a ring $R$, we mean that $R$ has the same cardinality as the set of real numbers.

\begin{theorem}\label{elevateX}
Let $X$ and $Z$ be finite posets, and suppose $X$ is a retraction of $Z.$ Let $B$ be a reduced local ring containing the rationals with $|B| = |B/M| = c.$ 
%Let $T = \widehat{B}$ be the completion of $B$ at its maximal ideal, and assume that if $I \in \Min{T},$ then $T/I$ is a RLR. 
Suppose $\vp: X \to \Spec (B)$ is a saturated embedding such that $\vp(\min X) = \Min{(B)}.$ Then there exists a local ring $S$ such that
\medskip
\begin{enumerate}
    \item $S$ is reduced,
    \item $S$ contains the rationals,
    \item If $N$ is the maximal ideal of $S,$ then $|S| = |S/N| = c,$
%    \item If $Q$ is a minimal prime ideal of $\widehat{S},$ then $\widehat{S}/Q$ is a RLR, and
    \item There is a saturated embedding $\psi: Z \to \Spec{(S)}$ such that $\psi(\min Z) = \Min{(S)}.$
\end{enumerate}

\noindent Moreover, if $B$ is quasi-excellent, then so is $S.$
\end{theorem}

\begin{proof}
    Since $X$ is a retraction of $Z,$ there exists a simple node $p \in Z$ such that $X = Z \setminus D_p.$ Enumerate $D_p$ as $q_1, \ldots, q_n.$ Recall $q_i <_c p$ in $Z$ for each $i,$ and $p \in X$ is minimal. Write $\Min(B) = \{\vp(p):= P_1, \ldots, P_m\}.$ Let $S = B[[y,z]]/J$ be as in the conclusion of Theorem \ref{onegrowing} applied to $\Min(B)$ and $n = |D_p|,$ and let $Q_1/J,\ldots, Q_n/J$ be the corresponding prime ideals of $S,$ each of which is constructed as in the proof of Theorem \ref{onegrowing}. Since $A = B[[y,z]]$ is local, so is $S$.  Since the ideal $J$ obtained from Theorem \ref{onegrowing} is the intersection of prime ideals, $S = A/J$ is reduced.  Since $B$ contains the rationals, so does $S$.  The maximal ideal of $S$ is $N = (M,y,z)$, and so $S/N \cong B/M$, and we have $|S/N| = |B/M| = c$.  Recall that $c^{\aleph_0} = c$ and so $|B| = |B[[y,z]]|$.  It follows that $|S| \leq |B[[y,z]]| = |B| = |B/M| = |S/N|$.  Since $|S/N| \leq |S|$, we have $|S| = |S/N| = c$. Therefore, $S$ satisfies parts $(1) - (3)$ of this lemma. 
    %and part (4) follows from part (3) of Theorem \ref{onegrowing}. 
    
    We now construct $\psi: Z \to \Spec(S).$ Let $x \in Z.$ If $x \in X,$ define $\psi(x):=(\vp(x), y, z)A/J.$ Otherwise, $x = q_i$ for some $i.$ In this case, we define $\psi(x) := Q_i/J.$ We claim $\psi$ has the desired properties. First note that if $w_1, w_2 \in Z$ with $w_1 \le_Z w_2$, then $\psi(w_1) \subseteq \psi(w_2).$  Now suppose $\psi(w_1) \subseteq \psi(w_2).$ Either $w_1 \in D_p$ or not. If $w_1 \notin D_p,$ then $\psi(w_1) = (\vp(w_1), y, z)A/J$ so that in particular $w_2 \notin D_p.$ Thus, $w_1, w_2 \in X$ and we have $$\psi(w_1) = (\vp(w_1), y, z)A/J \subseteq (\vp(w_2), y, z)A/J = \psi(w_2),$$ which implies $\vp(w_1) \subseteq \vp(w_2)$. Since $\vp$ is an embedding, $w_1 \le_X w_2$ hence $w_1 \le_Z w_2.$ If $w_1 \in D_p,$ then $w_1 = q_i$ for some $i,$ and we have $Q_i/J \subseteq \psi(w_2).$ If $\psi(w_2) = Q_i/J,$ then $w_1 = q_i = w_2$ by construction of $\psi.$ Otherwise, $w_2 \in X$ and we have $\psi(w_2) = (\vp(w_2), y, z)A/J.$ Each $Q_i = (\vp(p), y + \alpha z)A$ for some $\alpha \in \mathbb Q$ by construction. So $(\vp(p), y, z)A \subseteq (\vp(w_2), y, z)A,$ and therefore $\vp(p) \subseteq \vp(w_2)$. Since $\vp$ is an embedding, $p \le_Z w_2.$ Since $q_i <_Z p,$ we have $w_1 <_Z w_2.$ To show that $\psi$ is cover-preserving, we argue similarly. Suppose $w_1 <_c w_2$ in $Z.$ If $w_1 \notin D_p,$ then both $w_1, w_2 \in X$ so $\psi(w_1) <_c \psi(w_2)$ in $S$ by Lemma \ref{embedding} and properties of quotient rings. If $w_1 = q_i$ for some $i,$ then $w_2 = p$ because $p$ is the only cover of $q_i$ in $Z.$ By part (2) of Theorem \ref{onegrowing}, $(\vp(p), y, z)A/Q_i$ has height one in $A/Q_i,$ so $(\vp(p), y, z)A/J$ covers $Q_i/J$ in $S.$

    Lastly, if $\min X = \{p_1:=p, p_2, \ldots, p_m\},$ where $\vp(p_i) = P_i$ for all $1 \le i \le m,$ then $\min Z = \{q_1, \ldots, q_n, p_2, \ldots, p_m\}$ by definition of the retraction. Therefore, $$\psi(\min Z) = \{Q_1/J, \ldots, Q_n/J, (P_2, y, z)A/J, \ldots, (P_m, y, z)A/J\} = \Min(S).$$

    Finally, by Proposition \ref{growingexcellent}, if $B$ is quasi-excellent, then so is $S.$
\end{proof}

We now work to prove an analogous result for height zero gluings.  That is, if $Z$ is a nonempty finite poset, $Y$ a height zero gluing of $Z$, and $S$ a local ring (satisfying some nice properties) whose spectrum contains a saturated subset that is isomorphic to $Z$, we show in Theorem \ref{GlueZ} that there is a local ring $B$ (also satisfying some nice properties) whose spectrum contains a saturated subset that is isomorphic to $Y$. The main tool for our proof is Theorem \ref{biggluing}.  We start with a preliminary lemma.

\begin{lemma}\label{commonCompletion} Let $(S, M)$ and $(B, B \cap M)$ be two reduced local rings such that $B \subseteq S$ and $\widehat{B} = \widehat{S}.$ Suppose $f: \Spec{(S)} \to \Spec{(B)}$ given by $f(P) := P \cap B$ is surjective and satisfies $f(P)S = P$ for all prime ideals $P$ of $S$ whose height is positive. Then we have the following:
\begin{enumerate}
    \item If $Q$ is a minimal prime ideal of $S,$ then $Q \cap B$ is a minimal prime ideal of $B.$
    \item If $P_1 \cap B = P_2 \cap B$ for prime ideals $P_1, P_2$ of $S,$ then either $P_1 = P_2$ or both $P_1$ and $P_2$ are minimal prime ideals of $S.$ 
    \item If $P_1 \cap B \supseteq P_2 \cap B$ for prime ideals $P_1, P_2$ of $S,$ then there exists a prime ideal $Q$ of $S$ such that $P_1 \supseteq Q$ and $Q \cap B = P_2 \cap B.$
\end{enumerate}

\end{lemma}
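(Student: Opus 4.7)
The plan is to use the completion $T := \widehat{B} = \widehat{S}$ as a bridge: since $B$ and $S$ are Noetherian local, the completion maps are injective, and $T$ is faithfully flat over both $B$ and $S$. Consequently, the induced maps $\Spec(T) \to \Spec(B)$ and $\Spec(T) \to \Spec(S)$ are both surjective and satisfy the going-down property for flat extensions. All three parts of the lemma will flow from this observation, combined with the hypothesis $f(P)S = P$ for positive-height $P$, which is needed only for part (2).

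I will prove (3) first, since (1) will then be essentially the same argument in contrapositive form. Given $P_1 \cap B \supseteq P_2 \cap B$, I would lift $P_1$ to $\widehat{P}_1 \in \Spec(T)$ with $\widehat{P}_1 \cap S = P_1$ (via faithful flatness of $T$ over $S$), observe that $\widehat{P}_1 \cap B = P_1 \cap B \supseteq P_2 \cap B$, and then invoke going down for the flat extension $B \hookrightarrow T$ to obtain $\widehat{P}_0 \subseteq \widehat{P}_1$ in $\Spec(T)$ with $\widehat{P}_0 \cap B = P_2 \cap B$. The required prime is $Q := \widehat{P}_0 \cap S$; it lies inside $P_1 = \widehat{P}_1 \cap S$ because $\widehat{P}_0 \subseteq \widehat{P}_1$, and $Q \cap B = \widehat{P}_0 \cap B = P_2 \cap B$ since $B \subseteq S$.

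For (1), I would argue by contradiction. Assuming $Q \in \Min(S)$ while $Q \cap B$ is not minimal in $B$, I would pick $P'_0 \subsetneq Q \cap B$, lift $Q$ to $\widehat{Q} \in \Spec(T)$, and apply going down for $B \hookrightarrow T$ to produce $\widehat{Q}_0 \subseteq \widehat{Q}$ with $\widehat{Q}_0 \cap B = P'_0$. Contracting back, $Q_0 := \widehat{Q}_0 \cap S$ satisfies $Q_0 \subseteq Q$, and the inequality $Q_0 \cap B = P'_0 \ne Q \cap B$ forces $Q_0 \subsetneq Q$, contradicting minimality of $Q$. For (2), a short case analysis on $\height P_1$ and $\height P_2$ using the hypothesis $f(P)S = P$ finishes the job: if both heights are positive, then $P_1 = (P_1 \cap B)S = (P_2 \cap B)S = P_2$; if both are zero, both are minimal primes; in the mixed case, say $\height P_1 > 0$ and $\height P_2 = 0$, one gets $P_1 = (P_1 \cap B)S = (P_2 \cap B)S \subseteq P_2$, which together with $P_2$ being minimal forces $P_1 = P_2$, contradicting the disparity in heights.

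The main obstacle I anticipate is recognizing that faithful flatness of $T$ over $B$ is the right tool, since the hypotheses of the lemma do not explicitly flag flatness beyond the common completion. Once this observation is made, (1) and (3) are symmetric applications of going down and (2) reduces to bookkeeping.
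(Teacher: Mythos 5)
Your proposal is correct and follows essentially the same path as the paper: lift primes to the common completion $T$, invoke the Going-Down Theorem for the faithfully flat extension $B \hookrightarrow T$, contract back to $S$, and use $f(P)S = P$ for positive-height $P$ to finish part (2). The only cosmetic differences are that the paper proves (1) directly (showing any $P \subseteq Q\cap B$ must equal $Q\cap B$) rather than by contradiction, and handles (2) with a single WLOG rather than a three-way case split.
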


\begin{proof} Let $T:=\widehat{S} = \widehat{B}.$

(1) Suppose $Q$ is a minimal prime ideal of $S$ and $Q \cap B \supseteq P,$ where $P$ is a prime ideal of $B.$ Let $Q'$ be a prime ideal of $T$ such that $Q' \cap S  = Q.$ Since $T$ is also the completion of $B,$ there exists a prime ideal $Q''$ of $T$ such that $Q' \supseteq Q''$ and $Q'' \cap B = P$ by the Going-Down Theorem. Therefore, $Q' \cap S =  Q =  Q'' \cap S$ because $Q$ is a minimal prime ideal of $S$ and $Q' \cap S \supseteq Q'' \cap S.$ Consequently, $Q\cap B = Q' \cap B = Q'' \cap B = P.$ So $Q \cap B$ is a minimal prime ideal of $B.$

(2) Suppose not both $P_1$ and $P_2$ are minimal prime ideals of $S.$ Then $\height P_1 > 0$ without loss of generality. Since $P_1 \cap B = P_2 \cap B,$ we have that $$P_1 = (P_1 \cap B)S = (P_2 \cap B)S \subseteq P_2.$$ So $\height P_2 > 0,$ and thus $P_2 = (P_2 \cap B)S$ so that $P_1 = P_2.$

(3) Choose a prime ideal $P_1'$ of $T$ such that $P_1' \cap S = P_1.$ Then $P_1' \cap B = P_1 \cap B.$ By the Going-Down Theorem, there exists a prime ideal $P_2'$ of $T$ such that $P_1' \supseteq P_2'$ and $P_2' \cap B = P_2 \cap B.$ Let $Q:= P_2' \cap S.$ Then $$P_1 = P_1' \cap S \supseteq P_2' \cap S = Q,$$ and $Q \cap B = P_2 \cap B.$
\end{proof}

\begin{theorem}\label{GlueZ} Let $Z$ be a nonempty finite poset, and let $Y$ be a height zero gluing of $Z.$ Let $(S, M)$ be a reduced local ring containing $\mathbb Q$ such that $S/M$ is uncountable and $|S| = |S/M|.$ Suppose $\psi: Z \to \Spec S$ is a saturated embedding such that $\psi(\min Z) = \Min{(S)}.$ Then there exists a reduced local ring $B \subseteq S$ with maximal ideal $B \cap M$ such that 
\begin{enumerate}
    \item $B$ contains $\mathbb Q,$
    \item $\widehat{B} = \widehat{S},$
    \item $B/(B \cap M)$ is uncountable, $|B| = |B/(B \cap M)|,$ and
    \item There is a saturated embedding $\vp:Y \to \Spec B$ such that $\vp(\min Y) = \Min{(B)}.$
\end{enumerate}\end{theorem}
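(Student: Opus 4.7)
The plan is to apply the Gluing Theorem (Theorem \ref{biggluing}) to a partition of $\Min(S)$ dictated by the gluing map $h \colon Z \to Y$ realizing $Y$ as a height-zero gluing of $Z$. By Corollary \ref{dimension}, $\min Z = h^{-1}(\min Y),$ so the collection $\{\psi(h^{-1}(y)) : y \in \min Y\}$ is a partition of $\Min(S).$ Applying Theorem \ref{biggluing} to this partition yields a reduced local ring $B \subseteq S$ satisfying conclusions (1)--(3) of our theorem directly from conclusions (1)--(3) of Theorem \ref{biggluing}. Let $f \colon \Spec(S) \to \Spec(B)$ denote the surjection from part (5) of Theorem \ref{biggluing}. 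The natural candidate map is $\vp \colon Y \to \Spec(B)$ defined by $\vp(y) := f(\psi(u))$ for any $u \in h^{-1}(y);$ this is well-defined by Theorem \ref{biggluing}(4) and the construction of the partition.

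To see that $\vp$ is a poset map, given $y_1 \le y_2$ in $Y,$ use Theorem \ref{liftingTheorem} to obtain a $\mathcal{C}$-sequence $(u_1 = z_0, \ldots, z_n = u_2)$ in $Z$ with $h(u_i) = y_i,$ where $\mathcal{C}:= \{h^{-1}(y) : |h^{-1}(y)| > 1\}.$ For each consecutive pair $(z_i, z_{i+1}),$ either $z_i \le z_{i+1},$ forcing $f(\psi(z_i)) \subseteq f(\psi(z_{i+1})),$ or $h(z_i) = h(z_{i+1}),$ forcing $f(\psi(z_i)) = f(\psi(z_{i+1}))$ by the definition of $\vp;$ chaining these yields $\vp(y_1) \subseteq \vp(y_2).$ For the embedding property, if $\vp(y_1) \subseteq \vp(y_2)$ and $u_i \in h^{-1}(y_i),$ then Lemma \ref{commonCompletion}(3) gives $Q' \subseteq \psi(u_2)$ with $Q' \cap B = \psi(u_1) \cap B,$ and Lemma \ref{commonCompletion}(2) yields either $Q' = \psi(u_1)$ (hence $u_1 \le u_2$) or $Q', \psi(u_1) \in \Min(S)$ lying in the same partition class, in which case $\psi^{-1}(Q') \in h^{-1}(y_1)$ with $\psi^{-1}(Q') \le u_2;$ either way, $y_1 \le y_2.$

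The crux of the proof will be verifying saturation. Suppose $y_2$ covers $y_1$ in $Y.$ Since each $C \in \mathcal{C}$ is a subset of $\min Z$ (hence an antichain), Corollary \ref{cover} supplies $u_1, u_2 \in Z$ with $h(u_i) = y_i$ and $u_2$ covering $u_1$ in $Z,$ so $\psi(u_2)$ covers $\psi(u_1)$ in $\Spec(S)$ by saturation of $\psi$. For $P \in \Spec(B)$ with $\vp(y_1) \subseteq P \subseteq \vp(y_2),$ two applications of Lemma \ref{commonCompletion}(3) produce $Q \subseteq \psi(u_2)$ with $f(Q) = P$ and $Q' \subseteq Q$ with $Q' \cap B = \psi(u_1) \cap B.$ If $Q' = \psi(u_1),$ the cover relation in $\Spec(S)$ forces $Q \in \{\psi(u_1), \psi(u_2)\},$ hence $P \in \{\vp(y_1), \vp(y_2)\}.$ Otherwise, $Q'$ and $\psi(u_1)$ are both minimal primes in the same class, so $u_1' := \psi^{-1}(Q') \in h^{-1}(y_1) \cap \min Z$ and $u_1' \le u_2;$ a short case analysis (exploiting that $y_1 \in \min Y$ forces $h^{-1}(y_1) \subseteq \min Z,$ and that $h^{-1}(y_2) = \{u_2\}$ because $u_2$ is nonminimal) confirms that $u_2$ covers $u_1'$ in $Z,$ whence $\psi(u_2)$ covers $Q'$ and again $P \in \{\vp(y_1), \vp(y_2)\}.$ Finally, $\vp(\min Y) = f(\Min(S))$ by construction, and $f(\Min(S)) = \Min(B)$ follows from Lemma \ref{commonCompletion}(1) together with surjectivity of $f$ (lift any minimal prime of $B$ to $\Spec(S)$ and descend to a minimal prime). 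The main technical hurdle is precisely the saturation step: one must carefully deploy the height-zero hypothesis to show that the alternate lift $u_1'$ arising in the second case remains a cover predecessor of $u_2$ in $Z,$ so that the cover $\psi(u_1') <_c \psi(u_2)$ gives the needed conclusion.
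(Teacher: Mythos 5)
Your proposal is correct and follows essentially the same route as the paper's own proof: you form the same partition $\{\psi(h^{-1}(y)) : y \in \min Y\}$ of $\Min(S)$, invoke Theorem \ref{biggluing} to get $B$ and $f$, and prove the embedding, saturation, and minimal-primes statements via Lemma \ref{commonCompletion}, Corollary \ref{cover}, and Corollary \ref{dimension}. The only cosmetic difference is that you build $\vp$ by hand and then verify it is a poset map using Theorem \ref{liftingTheorem}'s $\mathcal C$-sequences, whereas the paper gets the poset map for free from the universal property in Definition \ref{gluingdef} after checking that $f\psi$ is compatible with the gluing map; similarly your saturation argument is phrased directly (showing $u_2$ covers the replacement lift $u_1'$, using that $u_1'$ is minimal, $h^{-1}(y_2)$ is a singleton, and $y_2 \in Y$ covers $y_1$) while the paper reaches the same conclusion via contradiction through an intermediate node $e$. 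These are reorganizations of the same underlying ideas, and each key step you sketch — well-definedness of $\vp$, the two applications of Lemma \ref{commonCompletion}(3), the case split on whether $Q'$ equals $\psi(u_1)$, and the identification $f(\Min(S)) = \Min(B)$ — checks out.
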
 
\noindent Moreover, if $S$ is quasi-excellent, then so is $B$.

\begin{proof}
    Let $C \subseteq \min Z$ be so that $Y$ is a gluing of $Z$ along $C$ with gluing map $g: Z \to Y.$ If $C$ is empty, then $Y = Z$ (after relabeling nodes if necessary - see Remark \ref{minimalC}) and we may set $B = S.$  Otherwise, assume $C$ is nonempty. If $C$ is a proper subset of $\min Z,$ let $\mathcal P = \{\psi(C)\}\cup\{\{\psi(x)\}: x\in \min Z\setminus C\}.$ Otherwise, let $\mathcal P = \{\psi(\min Z)\}.$ Note that $\mathcal P$ is a partition of $\Min(S).$ Let $B$ be the reduced local ring obtained from the conclusion of Theorem \ref{biggluing} applied to $\mathcal P.$ Let $f: \Spec S \to \Spec B$ be given by $f(P) = P \cap B,$ and let $h: Z \to \Spec B$ be defined as $h := f\psi.$ We claim $h$ is compatible with $g$ (see Definition \ref{compatible}). Indeed, if $g(a) = g(b)$ for $a \ne b,$ then $a, b \in C$ since $g$ is a gluing along $C.$ Therefore, $\psi(a), \psi(b) \in \psi(C),$ and since $\psi(C) \in \mathcal P,$ we have $h(a) = f\psi(a) = f\psi(b) = h(b)$ by part (4) of Theorem \ref{biggluing}. Therefore, there exists a poset map $\vp: Y \to \Spec B$ such that $\vp g = h = f\psi.$

    The ring $B$ satisfies $(1)-(3),$ so we need only show that $\vp$ is a saturated embedding satisfying $(4).$ In particular, we must show that if $\vp(g(x)) \subseteq \vp(g(y)),$ then $g(x) \le g(y)$ in $Y$ to demonstrate that $\vp$ is an embedding from $Y$ into $\Spec{(B)}.$ Suppose $\vp(g(x)) \subseteq \vp(g(y)).$ Then $$f\psi(x) = \psi(x) \cap B \subseteq  \psi(y) \cap B = f\psi(y).$$  Let $P_1 = \psi(x)$ and $P_2 = \psi(y).$ Then $P_1 \cap B \subseteq P_2 \cap B.$ If $\height P_1 \ne 0,$ then $P_1 \subseteq P_2$ by part (5) of Theorem \ref{biggluing}, and since $\psi$ is an embedding, we have $x \le_Z y$ in $Z$ so $g(x) \le_Y g(y).$ Otherwise, $\height P_1 = 0.$ By part (3) of Lemma \ref{commonCompletion}, there exists a prime ideal $Q$ of $S$ such that $P_2 \supseteq Q$ and $Q \cap B = P_1 \cap B.$ Therefore, both $Q$ and $P_1 \in \psi(D)$ where $D = C$ or $D$ is a singleton subset of $\min Z$ by definition of partition $\mathcal P.$ Write $Q = \psi(d)$ for some $d \in D.$  Then $g(x) = g(d)$ because both $x, d \in D,$ and $\psi(d) \subseteq \psi(y)$ implies that $d \le_Z y$ since $\psi$ is an embedding. Therefore, $g(x) = g(d) \le_Y g(y)$ and so $\vp$ is an embedding.

    To see that $\vp$ is saturated, suppose $g(x) <_c g(y)$ in $Y.$ By Lemma \ref{coverRemark}, there exists $x' \in Z$ such that $g(x') = g(x)$ and $x' <_c y.$ Hence $\psi(y)$ covers  $\psi(x')$ in $\Spec(S).$ In other words, the chain $\psi(x')\subsetneq \psi(y)$ is saturated. Write $P_2':= \psi(y)$ and $P_1':=\psi(x')$ in $\Spec(S).$ We claim $P_2' \cap B$ covers $P_1' \cap B$ in $\Spec(B).$ Suppose not.  Then there is a prime ideal $P$ of $B$ such that $P_2'\cap B \supsetneq P \supsetneq P_1' \cap B$.  Since $f$ is onto, there is a prime ideal $P'$ of $S$ such that $P = P' \cap B$.  So we have \begin{equation}\label{E} P_2'\cap B \supsetneq P' \cap B \supsetneq P_1' \cap B\end{equation}for some prime ideal $P'$ of $S.$ Note that by Lemma \ref{commonCompletion}, $\height P' \ne 0$ so that $P_2' \supseteq P'.$ 

Suppose that $\height P_1' = 0.$ By Lemma \ref{commonCompletion} there is a prime ideal $Q'$ of $S$ such that $P' \supseteq Q'$ and $Q' \cap B = P_1' \cap B.$ Then both $P_1'$ and $Q'$ belong to $\psi(E)$ where, as above, $E = C$ or $E$ is a singleton of $\min Z.$ So $Q' = \psi(e)$ for some $e \in E.$ Note that $x' \in E$ as well so that $g(x') = g(e).$ In particular, $\psi(e) \subsetneq \psi(y)$ in $\Spec{(S)},$ and thus $e < y$ in $Z.$ Now, $y$ cannot cover $e$ in $Z$ because $\psi$ is a saturated embedding and $\psi(y) = P_2'$ does not cover $\psi(e) = Q'$ in $\Spec{(S)}.$ So there exists $r \in Z$ such that $e < r < y.$ Note that $g(r) \ne g(e)$ because $r$ is not minimal in $Z,$ and likewise $g(y) \ne g(r).$ But $$g(x) = g(x') = g(e) < g(r) < g(y),$$ a contradiction because $g(y)$ covers $g(x)$ in $Y.$ Therefore, $\height P_1' \ne 0,$ and since $\height P_2' \ne 0$ and $\height P' \ne 0,$ we have, upon extending (\ref{E}) to $S,$ that $P_2' \supseteq P' \supseteq P_1'.$ So $P' = P_2'$ or $P' = P_1',$ and thus either $P_2' \cap B = P' \cap B$ or $P' \cap B = P_1' \cap B,$ which violates (\ref{E}). So there are no prime ideals of $B$ strictly between $P_2' \cap B$ and $P_1' \cap B.$ That is, $P_2' \cap B$ covers $P_1' \cap B$ in $\Spec{(B)}.$ In summary, since $y$ covers $x'$ in $Z,$ $\psi(y)$ covers $\psi(x')$ in $\Spec{(S)},$ and we have just shown that $f\psi(y)$ must cover $f\psi(x')$ in $\Spec{(B)}.$ In particular, $\vp g(y) = f\psi(y)$ covers $f\psi(x') = \vp g(x') = \vp g(x).$ So $\vp$ is a saturated embedding.

To see that $\vp(\min Y) = \Min(B),$ note that $g(\min Z) = \min Y$ by Lemma \ref{dimensionsEqual}, so $$\vp(\min Y) = \vp g(\min Z) = f\psi (\min Z) = f(\Min(S)) = \Min(B).$$ Finally, by Proposition \ref{excellentgluing}, if $S$ is quasi-excellent, then so is $B$.\end{proof}

We now state and prove a theorem that is crucial for both of our main results.  In particular, we show that if $X$ is a finite poset, then there exists a local domain (satisfying some nice properties) whose spectrum contains a saturated subset that is isomorphic to the given poset $X$.

\begin{theorem}\label{GoodRingB}
If $X$ is a finite poset, then there exists a local domain $(B, M)$ and a saturated embedding $\vp: X \to \Spec (B)$ such that:
\begin{enumerate}
    \item $B$ contains $\mathbb Q,$
    \item $|B| = |B/M| = c,$
    \item $B$ is quasi-excellent.
\end{enumerate}
\end{theorem}

\begin{proof}
Let $K$ be a finite poset such that $X$ is a saturated subset of $K$ and $K$ has both a single maximal node and a single minimal node. By Theorem \ref{reduceToPoint}, $K$ reduces to a point, so there exists a reduction sequence $(K_1, \ldots, K_n := K)$ where $K_1$ is a point and each predecessor is either a height zero splitting, retraction, or equal to its successor. Let $B_1 = \mathbb C$ and let $\vp_1: K_1 \to \Spec(B_1)$ be the unique poset map from $K_1$ onto $\Spec(B_1).$  Let $1 \le i < n$ and assume that a reduced local ring $(B_i, M_i)$ has been constructed so that there is a saturated embedding $\vp_i: K_i \to \Spec(B_i)$ for which the following statements hold:
\begin{enumerate}
    \item $B_i$ contains $\mathbb Q,$
    \item $|B_i| = |B_i/M_i| = c,$
    \item $\vp_i(\min K_i) = \Min(B_i),$
%    \item $\widehat{B_i}/I$ is a RLR for every $I \in \Min(\widehat{B_i}),$ and
    \item $B_i$ is quasi-excellent.
\end{enumerate}
Note that $B_1$ is a reduced local ring and that $B_1$ and $\vp_1$ satisfy properties (1)-(4).
We now construct a reduced local ring $(B_{i+1}, M_{i+1})$ and a saturated embedding $\vp_{i+1}: K_{i+1} \to \Spec(B_{i+1})$ satisfying properties (1)-(4). 

If $K_{i+1} = K_i,$ then set $B_{i+1} = B_i$ and $\vp_{i+1} = \vp_i.$ If $K_i$ is a height zero splitting of $K_{i+1},$ then $K_{i+1}$ is a height zero gluing of $K_i.$ Apply Theorem \ref{GlueZ} to $Y = K_{i+1},$ $Z = K_i,$ $\psi = \vp_i,$ and $S = B_i$ to obtain a reduced local ring $(B_{i+1}, M_{i+1})$ and saturated embedding $\vp_{i+1}: K_{i+1} \to \Spec(B_{i+1}).$ Properties $(1), (3),$ and $(4)$ follow immediately from the conclusion of Theorem \ref{GlueZ}. 
%Property (4) follows from the fact that $\widehat{B_i} = \widehat{B_{i+1}}.$ 
In addition, we have $|B_{i + 1}| =|B_{i+1}/M_{i+1}|$ and $\widehat{B_{i + 1}} = \widehat{B_i}$, from the conclusion of Theorem \ref{GlueZ}.  Thus, $|B_{i + 1}| =|B_{i+1}/M_{i+1}| = |\widehat{B_{i+1}}/M_{i+1}\widehat{B_{i + 1}}|=|\widehat{B_i}/M_i\widehat{B_i}|=|B_i/M_i| = c,$ and so property (2) holds as well. If $K_i$ is a retraction of $K_{i+1},$ apply Theorem \ref{elevateX} to $X = K_i,$ $Z = K_{i+1},$ $\vp = \vp_i,$ and the ring $B_i$ to obtain a reduced local ring $(B_{i+1}, M_{i+1})$ and saturated embedding $\vp_{i+1}: K_{i+1} \to \Spec(B_{i+1}).$ That $\vp_{i+1}$ and $B_{i+1}$ satisfy properties (1)-(4) is immediate from the statement of Theorem \ref{elevateX}.

Set $B:=B_n$ and $\vp:=\vp_n.$ Now $\vp: K \to \Spec(B)$ is a saturated embedding and the minimal prime ideals of $B$ are in one-to-one correspondence with $\min K.$ Therefore, $B$ is a reduced local ring with a single minimal prime ideal and is hence a domain. 
%Since $\dim K \ge 3$ and $\vp$ is an embedding, it follows that $\dim B \ge 3.$ If $x \in X,$ then $\height_K x \ge 2$ so $\height \vp(x) \ge 2$ in $\Spec(B)$ since $\vp$ is an embedding. 
That $B$ satisfies the other asserted properties is immediate. \end{proof}

Our first main result, Corollary \ref{FinalQuasiExcellent}, follows directly from Theorem \ref{GoodRingB} and Lemma \ref{isomorphic}.

\begin{corollary}\label{FinalQuasiExcellent}
Every finite poset is isomorphic to a saturated subset of the spectrum of a quasi-excellent domain.
\end{corollary}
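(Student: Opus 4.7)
The plan is to deduce this immediately from Corollary \ref{GoodRingB} together with Lemma \ref{isomorphic}, both of which have already been established. Given an arbitrary finite poset $X$, I would first invoke Corollary \ref{GoodRingB} to obtain a local domain $(B, M)$ that is quasi-excellent, together with a saturated embedding $\vp : X \to \Spec(B)$. The fact that $B$ is a domain (in particular an integral domain) combined with the quasi-excellence property is exactly what condition (6) of Corollary \ref{GoodRingB} provides.

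Next, I would apply Lemma \ref{isomorphic} to the saturated embedding $\vp$. That lemma guarantees that $\vp$ restricts to a poset isomorphism from $X$ onto $\vp(X)$, and moreover that $\vp(X)$ is a saturated subset of $\Spec(B)$. Combining these two facts directly yields that $X$ is isomorphic to the saturated subset $\vp(X)$ of $\Spec(B)$, where $B$ is a quasi-excellent domain, which is the desired conclusion.

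There is essentially no obstacle in this deduction itself; all the heavy lifting has been done in the preceding sections. The main substantive content is packed into Corollary \ref{GoodRingB}, which in turn relies on the iterative application of Corollary \ref{RingGext} along the sequence of $G$-extensions $F_1, F_2, \ldots, F_m$ provided by Corollary \ref{SeqOfGexts}, starting from the quasi-excellent base ring $\mathbb{C}$ and propagating quasi-excellence at each growing and gluing step via Proposition \ref{growingexcellent} and Proposition \ref{excellentgluing}. The embedding of $X$ into a larger poset $K$ with a single maximal and single minimal node (via Lemma \ref{XinK}) ensures that the final ring is a domain of dimension at least three. Thus the final statement is a clean corollary whose proof amounts to citing the appropriate results.
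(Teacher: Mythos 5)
Your proposal is correct and matches the paper's own proof exactly: both cite Corollary \ref{GoodRingB} to obtain a quasi-excellent domain $B$ with a saturated embedding $\vp : X \to \Spec(B)$, and then apply Lemma \ref{isomorphic} to conclude. The extra commentary you added about the structure of the argument underlying Corollary \ref{GoodRingB} is accurate but not part of the proof itself.
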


\begin{proof}
The result follows from Theorem \ref{GoodRingB} and Lemma \ref{isomorphic}.
\end{proof}

We are now ready to use Theorem \ref{GoodRingB} and Theorem \ref{FinalUFD2} to prove our second main result.

\begin{theorem}\label{FinalUFDTheorem}
Every finite poset is isomorphic to a saturated subset of the spectrum of a Noetherian unique factorization domain.
\end{theorem}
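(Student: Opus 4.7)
My plan is to combine the main technical results already assembled in the paper, namely Corollary \ref{GoodRingB}, Theorem \ref{FinalUFD2}, and Lemma \ref{isomorphic}. First, given a finite poset $X$, I would apply Corollary \ref{GoodRingB} to obtain a local domain $(B,M)$ and a saturated embedding $\vp\colon X\to\Spec(B)$ enjoying the following properties: $B$ contains $\mathbb{Q}$, $\dim B\geq 3$, $|B|=|B/M|=c$, every minimal prime of $T=\widehat{B}$ gives a regular quotient $T/I$, and $\height\vp(x)\geq 2$ for every $x\in X$. These are precisely the hypotheses required to invoke Theorem \ref{FinalUFD2}, so I would next apply that theorem to produce a local Noetherian UFD $(A,M\cap A)\subseteq B$ together with the surjective poset map $f\colon\Spec(B)\to\Spec(A)$, $f(P)=A\cap P$, satisfying $f(P)B=P$ whenever $\height P\geq 2$. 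Define $\psi:=f\circ\vp\colon X\to\Spec(A)$. The bulk of the proof consists in checking that $\psi$ is a saturated embedding of posets, after which Lemma \ref{isomorphic} immediately yields the conclusion.

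To verify that $\psi$ is a poset embedding, suppose $\psi(x)\subseteq\psi(y)$, i.e.\ $A\cap\vp(x)\subseteq A\cap\vp(y)$. Since both $\vp(x)$ and $\vp(y)$ have height at least two, Theorem \ref{FinalUFD2} gives
\[
\vp(x)=(A\cap\vp(x))B\subseteq(A\cap\vp(y))B=\vp(y),
\]
and, because $\vp$ is a poset embedding, $x\leq y$. Injectivity of $\psi$ follows from the same identity: $f$ is injective on the set of prime ideals of $B$ of height at least two, and $\vp$ is injective.

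The main thing to check is that $\psi$ is cover-preserving. Suppose $y$ covers $x$ in $X$. Since $\vp$ is saturated, $\vp(y)$ covers $\vp(x)$ in $\Spec(B)$. Let $Q\in\Spec(A)$ satisfy $\psi(x)=A\cap\vp(x)\subseteq Q\subseteq A\cap\vp(y)=\psi(y)$. Since $\psi(x)$ has height at least two in $A$ (again by Theorem \ref{FinalUFD2}), so does $Q$, and surjectivity of $f$ combined with the bijectivity of $f$ on the height $\geq 2$ part gives a prime $P\in\Spec(B)$ with $\height P\geq 2$ and $f(P)=Q$, hence $P=QB$. Then
\[
\vp(x)=(A\cap\vp(x))B\subseteq QB=P\subseteq(A\cap\vp(y))B=\vp(y),
\]
and since $\vp(y)$ covers $\vp(x)$, either $P=\vp(x)$ or $P=\vp(y)$, whence $Q=\psi(x)$ or $Q=\psi(y)$. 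Thus $\psi(y)$ covers $\psi(x)$.

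The step I expect to require the most care is precisely the cover-preserving verification: the map $f$ is in general only an order-preserving surjection, so a candidate intermediate prime $Q$ of $A$ must be lifted back to $B$ through the explicit extension $QB$, and this lifting only behaves well because the chosen $\vp(x),\vp(y)$ have height at least two, which is where the ``height $\geq 2$" clause of Corollary \ref{GoodRingB} becomes essential. Once $\psi$ is established as a saturated embedding, Lemma \ref{isomorphic} shows that $\psi(X)$ is a saturated subset of $\Spec(A)$ isomorphic to $X$, completing the proof.
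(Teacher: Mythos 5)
Your proposal is correct and takes essentially the same route as the paper: apply Corollary \ref{GoodRingB} to obtain $(B,M)$ and $\vp$, pass through Theorem \ref{FinalUFD2} to obtain the UFD $A$ and the map $f$, set $\psi = f\circ\vp$, and finish with Lemma \ref{isomorphic}. The only difference is that you spell out the verification that $\psi$ is a saturated embedding, whereas the paper deduces it directly from the remark that $f$ restricts to a poset isomorphism on the height-$\geq 2$ part of $\Spec(B)$; your expanded argument (in particular lifting a candidate intermediate prime $Q$ to $P = QB$ and invoking the cover relation in $\Spec(B)$) is a correct unpacking of that same observation.
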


\begin{proof}
Let $X$ be a finite poset. By Theorem \ref{GoodRingB}, there is a local domain $(B', M')$ and a saturated embedding $\vp: X \to \Spec (B')$ such that $B'$ contains the rationals and $|B'| = |B'/M'| = c$. Let dim$B' = m$, and let $n = m + 2.$ Define $B = B'[[x_1, \ldots ,x_n]]$ where $x_1, \ldots ,x_n$ are indeterminates.  Then $B$ is a local domain with maximal ideal $M = (M',x_1,\ldots ,x_n)$, $B$ contains the rationals, depth$B \geq 2$ and $|B|=|B/M| = c$. We also have dim$B = m + n$ and so $0 \leq m \leq \mbox{dim}B - 2$. Define $\pi: \Spec(B') \longrightarrow \Spec(B)$ by $\pi(P) = (P,x_1, \ldots,x_n)$. By the argument used in the proof of Lemma \ref{embedding}, $\pi$ is a saturated embedding and so $\Psi := \pi \vp$ is a saturated embedding from $X$ to $\Spec(B)$. If $z \in X$, then dim$(B/\Psi(z)) \leq \mbox{ dim}(B/(x_1, \ldots,x_n)) = \mbox{ dim}B' = m.$

By Theorem \ref{growcoheight}, if $f \in B'[[x_1, \ldots ,x_n]] = B$ and $Q \in \Spec(B)$ with $Q \in \Ass(B/fB)$, then dim$(B/Q) \geq n - 1 > m$. By Theorem \ref{FinalUFD2} with $h = m$, there is a local (Noetherian) UFD $(A, M \cap A)$ contained in $B$ such that $\widehat{A} = \widehat{B}$.  Moreover, 
%if $Q$ is a prime ideal of $B$ with $\height Q \ge 2,$ then $A$ contains a generating set for $Q.$ 
the map $f: \Spec{(B)} \to \Spec{(A)}$ given by $f(P) = P \cap A$ is onto and, if $P$ is a prime ideal of $B$ with dim$(B/P) \leq m$ then $f(P)B = P$. Define $\psi: X \to \Spec{(A)}$ by $\psi(x) = f(\Psi(x))$. Since $\Psi$ is a saturated embedding mapping elements of $X$ to prime ideals of $B$ with coheight at most $m$, and since $f$ is a poset isomorphism from the prime ideals of $B$ of coheight at most $m$ to the prime ideals of $A$ of coheight at most $m$, $\psi$ is a saturated embedding from $X$ to $\Spec(A)$.
Lemma \ref{isomorphic} then implies that $X$ is isomorphic to a saturated subset of the spectrum of the Noetherian UFD $A$.
%restricts to a saturated embedding from $X$ into $\Spec{(A)}.$ [[Add something about $\vp$]]
\end{proof}

\section*{Acknowledgements}

We are grateful to the referee for their many useful suggestions and insights.  In particular, they suggested a technique that greatly simplified our original strategy for the material covered in Section \ref{GluingNodes}. In addition, we are especially thankful to the referee for identifying an issue in Section \ref{UFDTheorem} that we have since resolved.

\begin{bibdiv}
\begin{biblist}

%\bib{SMALL2009}{article}%{
%   author={Arnosti, N.},
%   author={Karpman, R.},
%   author={Leverson, C.},
 %  author={Levinson, J.},
%   author={Loepp, S.},
%   title={Semi-local %formal fibers of minimal %prime ideals of excellent
%   reduced local rings},
%   journal={J. Commut. %Algebra},
 %  volume={4},
 %  date={2012},
%   number={1},
%   pages={29--56},
%   issn={1939-0807},
%   review={\MR{2913526}}%,
%   doi={10.1216/JCA-2012%-4-1-29},
%}

\bib{Avery}{article}{
   author={Avery, Chloe I.},
   author={Booms, Caitlyn},
   author={Kostolansky, Timothy M.},
   author={Loepp, S.},
   author={Semendinger, Alex},
   title={Characterization of completions of noncatenary local domains and
   noncatenary local UFDs},
   journal={J. Algebra},
   volume={524},
   date={2019},
   pages={1--18},
   issn={0021-8693},
   review={\MR{3902351}},
   doi={10.1016/j.jalgebra.2018.12.016},
}

%\bib{GluingPaper}{article}{
%      title={Gluing Minimal Prime Ideals in Local Rings}, 
%      author={Cory H. Colbert and S. Loepp},
%      year={2021},
%      eprint={arXiv:2112.13278},
     % archivePrefix={arXiv},
     % primaryClass={math.AC}
%}	

\bib{GluingPaper}{article}{
   author={Colbert, Cory H.},
   author={Loepp, S.},
   title={Gluing minimal prime ideals in local rings},
   journal={Comm. Algebra},
   volume={51},
   date={2023},
   number={1},
   pages={239--247},
   issn={0092-7872},
   review={\MR{4525295}},
   doi={10.1080/00927872.2022.2096226},
}

\bib{heitmannUFD}{article}{
   author={Heitmann, Raymond C.},
   title={Characterization of completions of unique factorization domains},
   journal={Trans. Amer. Math. Soc.},
   volume={337},
   date={1993},
   number={1},
   pages={379--387},
   issn={0002-9947},
   review={\MR{1102888}},
   doi={10.2307/2154327},
}

%\bib{heitmann}{article}{
%   author={Heitmann, %Raymond C.},
%   title={Completions of local rings with an isolated singularity},
%   journal={J. Algebra},
%   volume={163},
%   date={1994},
%   number={2},
%   pages={538--567},
%   issn={0021-8693},
%   review={\MR{1262718}},
%   doi={10.1006/jabr.1994.1031},
%}

\bib{HeitmannNoncatenary}{article}{
   author={Heitmann, Raymond C.},
   title={Examples of noncatenary rings},
   journal={Trans. Amer. Math. Soc.},
   volume={247},
   date={1979},
   pages={125--136},
   issn={0002-9947},
   review={\MR{517688}},
   doi={10.2307/1998777},
}

\bib{Hochster}{article}{
   author={Hochster, M.},
   title={Prime ideal structure in commutative rings},
   journal={Trans. Amer. Math. Soc.},
   volume={142},
   date={1969},
   pages={43--60},
   issn={0002-9947},
   review={\MR{251026}},
   doi={10.2307/1995344},
}

\bib{Semendinger}{article}{
   author={Loepp, Susan},
   author={Semendinger, Alex},
   title={Maximal chains of prime ideals of different lengths in unique
   factorization domains},
   journal={Rocky Mountain J. Math.},
   volume={49},
   date={2019},
   number={3},
   pages={849--865},
   issn={0035-7596},
   review={\MR{3983303}},
   doi={10.1216/RMJ-2019-49-3-849},
}

\bib{matsumuracommalg}{book}{
   author={Matsumura, Hideyuki},
   title={Commutative algebra},
   series={},
   volume={56.},
   edition={2},
   publisher={Benjamin/Cummings Publishing Co., Inc., Reading, Mass.},
   date={1980},
   pages={xv+313},
   isbn={0-8053-7026-9},
   review={\MR{0575344}},
}

%\bib{countableexcellent}%{article}{
%   author={Loepp, S.},
%   author={Yu, Teresa},
%   title={Completions of countable excellent domains and countable
%   noncatenary domains},
%   journal={J. Algebra},
%   volume={567},
%   date={2021},
 %  pages={210--228},
%   issn={0021-8693},
%   review={\MR{4158729}},
%   doi={10.1016/j.jalgebra.2020.09.021},
%}

\bib{Nagata}{article}{
   author={Nagata, Masayoshi},
   title={On the chain problem of prime ideals},
   journal={Nagoya Math. J.},
   volume={10},
   date={1956},
   pages={51--64},
   issn={0027-7630},
   review={\MR{78974}},
}

\bib{Ogoma}{article}{
   author={Ogoma, Tetsushi},
   title={Noncatenary pseudogeometric normal rings},
   journal={Japan. J. Math. (N.S.)},
   volume={6},
   date={1980},
   number={1},
   pages={147--163},
   issn={0289-2316},
   review={\MR{615018}},
   doi={10.4099/math1924.6.147},
}

\bib{Rotthaus}{article}{
   author={Rotthaus, Christel},
   title={Excellent rings, Henselian rings, and the approximation property},
   journal={Rocky Mountain J. Math.},
   volume={27},
   date={1997},
   number={1},
   pages={317--334},
   issn={0035-7596},
   review={\MR{1453106}},
   doi={10.1216/rmjm/1181071964},
}

\bib{RotthausGerman}{article}{
   author={Rotthaus, Christel},
   title={Komplettierung semilokaler quasiausgezeichneter Ringe},
   language={German},
   journal={Nagoya Math. J.},
   volume={76},
   date={1979},
   pages={173--180},
   issn={0027-7630},
   review={\MR{550860}},
}

%\bib{Trotter}{article}{
%    author={Trotter, W.},
%    author={Graham (ed.), R. L.},
%    author={Gr\"otschel (ed.), M.},
%    author={Lov\'asz (ed.), L.},
%    title={Partially Ordered Sets},
%    journal={Handbook of Combinatorics},
%    pages={433-480},
%    date={1995},
%    publisher={North Holland},
%}

\bib{WiegandEXPO}{book}{
     AUTHOR = {Wiegand, R.},
     AUTHOR = {Wiegand, S.},
     TITLE = {Prime ideals in {N}oetherian rings: a survey},
 BOOKTITLE = {Ring and module theory},
    SERIES = {Trends Math.},
     PAGES = {175--193},
 PUBLISHER = {Birkh\"{a}user/Springer Basel AG, Basel},
      YEAR = {2010},
   %MRCLASS = {13E05 (13F20 13F25 13J15)},
  %MRNUMBER = {2744272},
%MRREVIEWER = {Marco Fontana},
       %DOI = {10.1007/978-3-0346-0007-1\_13},
       %URL = {https://doi.org/10.1007/978-3-0346-0007-1_13},
    }

\end{biblist}
\end{bibdiv}

\end{document}